\DeclareMathOperator{\rank} {rank}
\DeclareMathOperator{\de}{d}
\DeclareMathOperator{\orb}{orb}
\DeclareMathOperator{\dom}{dom}
\DeclareMathOperator{\Sp}{Sp}
\DeclareMathOperator{\dist}{dist}
\begin{document}

\title{Vertex partitions of metric spaces with finite distance sets}
\author{ N. W. Sauer}

%\subjclass[2000]{Primary: 03E02. Secondary: 22F05, 05C55, 05D10, 22A05, 51F99}
%\keywords{Topological groups actions, Oscillation stability, Ramsey theory, Metric geometry, Urysohn metric space}
\date{January 04 2010 }
\maketitle

\newcommand{\snl} {\\ \smallskip}
\newcommand{\mnl}{\\ \medskip} 
\newcommand{\Bnl} {\\ Bigskip}
\newcommand{\edge}{\makebox[22pt]{$\circ\mspace{-6 mu}-\mspace{-6 mu}\circ$}}
\newcommand{\nedge}{\makebox[22pt]{$\circ\mspace{-6 mu}\quad\: \mspace{-6 mu}\circ$}}

\newcommand{\restrict}[2]{#1\mspace{-2mu}\mathbin{\upharpoonright}\mspace{-1mu} #2}
\newcommand{\Kat}{Kat\v{e}tov }
\newcommand{\Fra}{Fra\"{\i}ss\'e}
\newcommand{\str}[1]{\stackrel{#1}{\sim}}

\newtheorem{thm}{Theorem}[section]
\newtheorem{lem}{Lemma}[section]
\newtheorem{coroll}{Corollary}[section]
\newtheorem{ass}{Assumption}[section]
\newtheorem{defin}{Definition}[section]
\newtheorem{example}{Example}[section] 
\newtheorem{fact}{Fact}[section]

\newtheorem{prop}{Proposition}[section]
\newtheorem{obs}{Observation}[section]
\newtheorem{cor}{Corollary}[section]
\newtheorem{sublem}{Sublemma}[section]
\newtheorem{claim}{Claim}[section]
\newtheorem{question}{Question}[section] 

\newtheorem{comment}{Comment}[section]
\newtheorem{problem}{Problem}[section]
\newtheorem{remark}{Remark}[section]

\newcommand\con{\char'136{}}

\begin{abstract}
A metric space $\mathrm{M}=(M,\de)$ is {\em indivisible} if for every colouring $\chi: M\to 2$ there exists $i\in 2$ and a copy $\mathrm{N}=(N, \de)$ of $\mathrm{M}$ in $\mathrm{M}$ so that $\chi(x)=i$ for all $x\in N$. The metric space $\mathrm{M}$ is {\em homogeneus} if for every isometry $\alpha$ of a finite subspace of $\mathrm{M}$ to a subspace of $\mathrm{M}$ there exists an isometry of $\mathrm{M}$ onto $\mathrm{M}$ extending $\alpha$. A homogeneous metric space $\mathrm{U}_{\mathcal{D}}$  with $\mathcal{D}$ as set of distances  is an {\em Urysohn metric space} if every finite metric space with set of distances a subset of $\mathcal{D}$ has an isometry into $\mathrm{U}_{\mathcal{D}}$.   The main result of this paper states that all countable  Urysohn metric spaces with a finite set of distances are indivisible. 
\end{abstract}

\vfil\eject
\section{Introduction}

The connection between structural Ramsey theory,  {\Fra} Theory  and topological dynamics established in \cite{Pe1} and \cite{KPT} leads naturally to the partition problem addressed in this paper. See \cite{NVT} for a more extensive discussion and for establishing claimed facts. Detailed introductions to {\Fra} limits can be found in \cite{Fr} or \cite{WH}. Below is a short account of the  origin of the problem discussed in this paper.

For $(M;\de)$ a metric space let $\dist(M;\de)$ be the set of  distances between points of $(M;\de)$.  A metric space $\mathrm{M}$  is {\em homogeneous} if for every isometry $\alpha$ of a finite subspace of $\mathrm{M}$ to a subspace of $\mathrm{M}$ there exists an isometry of $\mathrm{M}$ onto $\mathrm{M}$ extending $\alpha$. A homogeneous metric space $\boldsymbol{U}_{\mathcal{D}}$ with $\dist(\boldsymbol{U}_{\mathcal{D}})=\mathcal{D}$     is an {\em Urysohn metric space}  if every finite metric space $\mathrm{M}$ with $\dist(\mathrm{M})\subseteq \mathcal{D}$   has an isometry into $\boldsymbol{U}_{\mathcal{D}}$. As Urysohn metric spaces are universal objects, a  subset $\mathcal{D}\subseteq \Re_{\geq 0}$ is called {\em universal} if there exists an Urysohn metric space $\boldsymbol{U}_{\mathcal{D}}$. To decide whether a given set $\mathcal{D}\subseteq \Re_{\geq 0}$ is universal can be difficult. A particular example of an Urysohn space is the {\em Urysohn sphere} $\boldsymbol{U}_{\Re\cap[0,1]}$.   Other examples are the Urysohn spaces $\boldsymbol{U}_m$ with $m\in \omega$ for which $\dist(\boldsymbol{U}_m)$ is equal to $\{0,1,2,\dots,m-1\}$ and the Hilbert space $\ell_2$.

A {\em copy} of a metric space $(M;\de)$ in $(M;\de)$ is the image of an isometry of $(M;\de)$ in $(M;\de)$.  The space $\mathrm{M}=(M;\de)$ is {\em indivisible} if for  every colouring $\chi: M\to 2$ there exists $i\in 2$ and a copy $\mathrm{M}^\ast$ of $\mathrm{M}$ in $\mathrm{M}$ so that $\chi(x)=i$ for all $x\in M^\ast$.

A metric space $(M;\de)$ is {\em oscillation stable} if for every bounded and uniformly continuous function $f: M\to \Re$ and every $\epsilon>0$  there is a copy $(M^\ast,\de)$ of $(M;\de)$ in $(M;\de)$ so that:
\[
\sup\{|f(x)-f(y)| \mid x,y\in M^\ast\}<\epsilon.
\]
The question whether the unit sphere of the Hilbert space  $\ell_2$ is oscillation stable had been known as the {\em distortion problem} and was finally resolved in the negative, see \cite{OS}. This then led to the question whether the other prominent bounded metric space with a large isometry group,  namely the Urysohn sphere  $\boldsymbol{U}_{\Re\cap[0,1]}$,  is oscillation stable. After an initial reformulation of the problem by V. Pestov, see \cite{Pe1},  Lopez-Abad and Nguyen Van Th{\'e}, see \cite{LANVT},  started a programme to reduce the problem to one of discrete  mathematics. In particular they proved that the Urysohn sphere will be oscillation stable if and only if each  of the Urysohn spaces $\boldsymbol{U}_m$ is indivisible. Subsequently it was established in \cite{LNSA} that all of the Urysohn spaces $\boldsymbol{U}_m$ are indivisible, finishing the proof that the Urysohn sphere $\boldsymbol{U}_{\Re\cap[0,1]}$ is oscillation stable.

Which metric spaces are oscillation stable? There does not seem to be any way at present to attack this question in general. Even to ask for a characterization of  the oscillation stable homogeneous metric spaces is beyond our present means. But, to find a characterization of the oscillation stable Urysohn metric spaces might just be possible following the ideas of Lopez-Abad and Nguyen Van Th{\'e} to reformulate the problem as a problem of discrete mathematics. There are essentially two steps to such a characterization. Step 1 is to investigate whether the Urysohn metric spaces $\boldsymbol{U}_{\mathcal{D}}$ for $\mathcal{D}$ finite are indivisible. An, as I think, attractive question in its own right, belonging to the general area of structural Ramsey theory. The present paper contains the proof that all Urysohn metric spaces $\boldsymbol{U}_{\mathcal{D}}$ with $\mathcal{D}$ finite are indivisible, see Theorem \ref{thm:final}.

Step 2 is  to establish a connection between the oscillation stability of  Urysohn spaces $\boldsymbol{U}_{\mathcal{D}}$ for general bounded $\mathcal{D}$ and the indivisibility of the ones for finite $\mathcal{D}$, following \cite{LANVT}.  This connection between the two problems  is to appear in a forthcoming paper. If $\mathcal{D}$ is not bounded $\boldsymbol{U}_{\mathcal{D}}$ can not be oscillation stable. (Unpublished, but generally  known in the area,      the proof being an easy modification of the proof  of Theorem~3.14 in \cite{DLPS}.)

The objects discussed in this paper are metric spaces with a finite set  of distances. They may be viewed as labelled graphs or relational structures. Relational structures are usually denoted by the same letter as their base sets, just using a different font.  On the other hand, metric spaces carry a natural topology and topological spaces are often just denoted by their base sets. Here $\mathrm{M}=(M;\de_\mathrm{M})$ is the full description of a metric space, used mostly  if different metrics on $M$ are needed. This description will often be abbreviated to $(M;\de)$ or just $M$ if the metric is given by context.

\section{Notation and basic facts}

\noindent
Let $0\in \mathcal{D}\subseteq \Re_{\geq 0}$ be a given finite set of numbers.

\vskip 3pt
A pair $\mathrm{H}=(H,\de)$ is a {\em $\mathcal{D}$-graph} if $\de: H^2\to \mathcal{D}$  is a function with $\de(x,y)=0$ if and only if $x=y$ and $\de(x,y)=\de(y,x)$ for all $x,y\in H$.  For $A\subseteq H$ we denote by  $\restrict{\mathrm{H}}{A}$ the {\em substructure of\/ $\mathrm{H}$ generated by  $A$}, that is  the $\mathcal{D}$-graph on $A$ with distance function the restriction of\/ $\de$ to $A^2$. The $\mathcal{D}$-graph $\mathrm{H}$ is {\em metric} if it is a metric space. That is if $\de(p,q)\leq \de(p,r)+\de(r,q)$  for all  triples $(p,q,r)\in H^3$. Let $\mathfrak{M}_{\mathcal{D}}$ be the class of metric spaces $\mathrm{M}$ with $\dist(\mathrm{M})\subseteq \mathcal{D}$ and $\mathfrak{U}_{\mathcal{D}}$ the class of countable Urysohn metric spaces in $\mathfrak{M}_{\mathcal{D}}$. The following 4-values condition, see \cite{NVT} or \cite{DLPS} provides a characterization of universal sets of numbers. 

\begin{lem}\label{lem:triangleen} 
The set $\mathcal{D}$ of numbers is {\em universal}  if and only if for any two triangles  $a_0,b,c$ and $a_1,b,c$  which are in $\mathfrak{M}_{\mathcal{D}}$,   there exists $0<t\in \mathcal{D}$ so that the $\mathcal{D}$-graph  $\mathrm{L}=(\{a_0,a_1,b,c\}, \de)$ with $\de(a,a_1)=t$ and for which the two triangles are induced subspaces,  is an element of $\mathfrak{M}_{\mathcal{D}}$.  (The case $\de(b,c)=0$, that is $b=c$ is included.)
\end{lem}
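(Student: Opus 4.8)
The plan is to characterize universality via the amalgamation property of the Fraïssé class $\mathfrak{M}_{\mathcal{D}}$, and to show that the seemingly weak "4-values condition" (involving only four-point configurations) is in fact equivalent to full amalgamation. The existence of an Urysohn space $\boldsymbol{U}_{\mathcal{D}}$ is, by standard Fraïssé theory (which I may invoke from the cited sources), equivalent to $\mathfrak{M}_{\mathcal{D}}$ being a Fraïssé class; since $\mathfrak{M}_{\mathcal{D}}$ is already closed under substructures, contains countably many isomorphism types (as $\mathcal{D}$ is finite), and has the joint embedding property (a one-point extension argument), the only substantive condition is the amalgamation property. So the proof reduces to proving the equivalence: $\mathfrak{M}_{\mathcal{D}}$ has the amalgamation property if and only if the 4-values condition holds.

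The forward direction is immediate: I would apply amalgamation to the two triangles $a_0,b,c$ and $a_1,b,c$, amalgamated over the common subspace $\{b,c\}$, which directly produces the point $a_1$ at some admissible distance $t=\de(a_0,a_1)$ from $a_0$, yielding the required metric $\mathcal{D}$-graph $\mathrm{L}$. The reverse direction is the heart of the argument. Here I would take two metric spaces $\mathrm{A}$ and $\mathrm{B}$ in $\mathfrak{M}_{\mathcal{D}}$ sharing a common subspace $\mathrm{C}$, and build an amalgam on $A\cup B$ (identified along $C$) by assigning distances $\de(a,b)$ for $a\in A\setminus C$, $b\in B\setminus C$. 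The plan is to do this \emph{one pair at a time}: I would first reduce to the case where $\mathrm{A}=\mathrm{C}\cup\{a\}$ and $\mathrm{B}=\mathrm{C}\cup\{b\}$ each add a single point, since a general amalgam can be assembled by adding the points of $A\setminus C$ and $B\setminus C$ successively, checking at each stage that the partial structure remains metric.

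**The hard part will be** the core one-point-versus-one-point amalgamation: given a point $a$ at prescribed distances to all of $C$ and a point $b$ at prescribed distances to all of $C$, I must choose a single value $t=\de(a,b)\in\mathcal{D}$ so that \emph{every} triangle $a,b,c$ (for $c\in C$) is metric simultaneously. The 4-values condition guarantees, for each individual $c$, that a suitable $t_c$ exists making the configuration $a,b,c$ metric; the genuine difficulty is that these witnesses $t_c$ may differ across different choices of $c\in C$, so I need a single $t$ that works for all of them at once. I expect to resolve this by an interval/optimization argument: each $c$ constrains $t$ to lie in the closed interval $[\,|\de(a,c)-\de(b,c)|,\ \de(a,c)+\de(b,c)\,]$ of the reals, and the 4-values condition, applied carefully to extremal choices of $c$ (those maximizing the lower bound and those minimizing the upper bound), should be exactly what is needed to certify that the intersection of these constraint intervals contains an admissible element of $\mathcal{D}$. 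Making this extremal selection and verifying that the witness produced by the 4-values condition for the worst-case pair of constraints lands inside all the other intervals is where the combinatorial care lies; I would handle the degenerate case $b=c$ (i.e. $\de(b,c)=0$) separately, as flagged in the statement, since there the interval collapses to the single forced value $\de(a,c)$.
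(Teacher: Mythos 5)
The paper does not actually prove this lemma---it is stated as a known fact with a pointer to \cite{NVT} and \cite{DLPS}---so there is no internal proof to compare against; your proposal is essentially a reconstruction of the standard argument from those references, and its skeleton is correct. Universality is equivalent (by {\Fra} theory) to the amalgamation property of $\mathfrak{M}_{\mathcal{D}}$; amalgamation reduces by the usual induction to the case where each side adds a single point over a common finite base $C$; and the one-point-versus-one-point case is settled exactly by the interval argument you describe. Two remarks on the places you flag or gloss over. First, the step you call the hard part resolves more cleanly than you suggest: choose $c_0\in C$ maximizing the lower bound $|\de(a,c)-\de(b,c)|$ and $c_1\in C$ minimizing the upper bound $\de(a,c)+\de(b,c)$, and apply the 4-values condition to the two metric triangles $a,c_0,c_1$ and $b,c_0,c_1$ sharing the edge $c_0c_1$ (the degenerate clause $b=c$ of the lemma covering the case $c_0=c_1$, and the empty base being trivial). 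The resulting $t$ lies in the $c_0$-interval and the $c_1$-interval, and since every other $c$ imposes a weaker lower bound than $c_0$ and a weaker upper bound than $c_1$, it automatically lies in all intervals; no further verification is needed. Second, in the forward direction you should not rely on bare amalgamation, which in principle permits identifying $a_0$ with $a_1$ and would then fail to produce $t>0$ when the two triangles are congruent over $\{b,c\}$; instead realize the \Kat function $\mathfrak{t}$ on $\{b,c\}$ given by the second triangle at a point distinct from $a_0$, which is possible because orbits in an Urysohn space are infinite (cf.\ Theorem \ref{thm:orbits}). With those two points made explicit, the outline is a complete and correct proof.
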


Let $\mathcal{D}$ be a universal set of numbers.  By scaling $\mathcal{D}$ to $t\mathcal{D}$ for some positive real $t$ the set of distances $t\mathcal{D}$ is a universal set of numbers.  The metric spaces in  $\mathfrak{U}_{\mathcal{D}}$ are indivisible if and only if the metric spaces in $\mathfrak{U}_{t\mathcal{D}}$ are indivisible. Hence we may assume that $\min(\mathcal{D}\setminus\{0\})=1$.

A  function $\mathfrak{t} : F\to \mathcal{D}$, with $F$ a finite subset of $H$, is a  {\em type function} of $\mathrm{H} $. For $\mathfrak{t} $ a type function  let $\Sp(\mathfrak{t})$ be the $\mathcal{D}$-graph on $F\cup \{\mathfrak{t}\}$ for which: 
\begin{enumerate}
\item $\restrict{\Sp(\mathfrak{t})}{\dom(\mathfrak{t})}=\restrict{\mathrm{H} }{\dom(\mathfrak{t})}$.
\item $\forall x\in F\, \,\big(\de(\mathfrak{t},x)=\de(x,\mathfrak{t})=\mathfrak{t}(x)\big)$.
\end{enumerate}
For $\mathfrak{t}$ a type function of $\mathrm{H}$    let
\[
\orb(\mathfrak{t})=\{y\in H\setminus\dom(\mathfrak{t}) : \forall\, x\in  \dom(\mathfrak{t})\, \bigl(\de(y,x)=\de(\mathfrak{t},x)=\mathfrak{t}(x)\bigr)\},
\]
the {\em orbit} of $\mathfrak{t}$. If the distinction is necessary we will write $\orb_{\mathrm{H}}(\mathfrak{t})$. Note that if $\dom(\mathfrak{t})=\emptyset$ then $\orb(\mathfrak{t})=H$.   If $\orb\mathfrak{t}\not=\emptyset$ then $\mathfrak{t}$ {\em can be realized in $\mathrm{H}$} and every $p\in \orb\mathfrak{t}$ is a {\em realization of $\mathfrak{t}$}. A type function $\mathfrak{t}$ of $\mathrm{H}$ is a {\em \Kat function} if $\Sp(\mathfrak{t})$ is metric. Note here that $\mathcal{D}$ is given. Hence $\dist\big(\Sp(\mathfrak{t})\big)\subseteq \mathcal{D}$ is assumed.  The {\em rank} of\/ $\mathfrak{t}$, $\rank(\mathfrak{t})$, is $\min\{\mathfrak{t}(x) : x\in \dom(\mathfrak{t})\}$.

Let $\mathrm{M}\in \mathfrak{M}_{\mathcal{D}}$.  Then a type function  $\mathfrak{t}$ of $\mathrm{M}$ is a \Kat function if and only if  for all $x,y\in \dom(\mathfrak{t})$:
\begin{align}
|\mathfrak{t}(x)-\mathfrak{t}(y)|\leq d(x,y)\leq \mathfrak{t}(x)+\mathfrak{t}(y).
\end{align}
Remember $\mathfrak{t}(x)\in \mathcal{D}$ for all $x\in \dom(\mathfrak{t})$. Because $\dom(\mathfrak{t})\subseteq M$ all triangles in $\dom(\mathfrak{t})$ are metric. Hence,  in order to check that $\Sp(\mathfrak{t})$ is metric it suffices to check all triangles of the form $\{\mathfrak{t},x,y\}$ with $x,y\in \dom(\mathfrak{t})$, which is an easy consequence of Inequalities (1).

The following Theorem \ref{thm:Fra1} is a direct consequence of the general theory of {\Fra} limits see \cite{NVT} for a more extensive discussion or \cite{Fr} and \cite{WH}. The other assertions in the remainder of this section  are known facts as well.  Their proofs  are given. 

\begin{thm}\label{thm:Fra1}
A countable  metric space $\mathrm{M}\in \mathfrak{M}_{\mathcal{D}}$ is an Urysohn space if and only if every \Kat function $\mathfrak{t}$ of \/ $\mathrm{M}$ is realized in $M$. Any two  Urysohn spaces in $\mathfrak{U}_{\mathcal{D}}$ are isometric, indeed:

Let  $\mathrm{M}, \mathrm{N}\in \mathfrak{U}_{\mathcal{D}}$,   then every isometry of a finite subspace of\/  $\mathrm{M}$ to a finite subspace of $\mathrm{N}$ can be extended to an isometry of\/  $\mathrm{M}$ to $\mathrm{N}$. 

Let  $\mathrm{M}\in \mathfrak{U}_{\mathcal{D}}$ and $\mathrm{N}\in \mathfrak{M}_{\mathcal{D}}$ countable. Then every isometry of a finite subspace of\/  $\mathrm{N}$ into  $\mathrm{M}$ can be extended to an isometry of\/  $\mathrm{N}$ into $\mathrm{M}$.
\end{thm}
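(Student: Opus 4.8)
The plan is to route all three assertions through the single mechanism supplied by the first one: realization of \Kat functions is precisely the extension property that drives Fra\"iss\'e's back-and-forth. So I would prove the characterization first, then read off the two extension statements as a full back-and-forth (part two) and a forth-only argument (part three). For the forward direction of the characterization, assume $\mathrm{M}$ is Urysohn and let $\mathfrak{t}$ be a \Kat function with finite domain $F\subseteq M$. Since $\mathfrak{t}$ is a \Kat function, $\Sp(\mathfrak{t})$ is a finite metric space with $\dist(\Sp(\mathfrak{t}))\subseteq\mathcal{D}$, so universality gives an isometry of $\Sp(\mathfrak{t})$ into $\mathrm{M}$, carrying $F$ to some $F'$ and the type vertex to some $y$. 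The inverse of this isometry, restricted to $F'$, is an isometry of a finite subspace of $\mathrm{M}$ onto $F$; by homogeneity it extends to a self-isometry $g$ of $\mathrm{M}$, and then $g(y)\in\orb(\mathfrak{t})$, so $\mathfrak{t}$ is realized.

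For the backward direction I would establish universality and homogeneity separately, building maps one point at a time. For universality, given a finite $\mathrm{A}\in\mathfrak{M}_{\mathcal{D}}$ enumerate its points and, having embedded $a_1,\dots,a_k$ as $b_1,\dots,b_k$, define the type $\mathfrak{t}(b_i)=\de(a_{k+1},a_i)$; Inequalities~(1), applied inside the metric space $\mathrm{A}$, show $\mathfrak{t}$ is a \Kat function, so it is realized, furnishing the image of $a_{k+1}$. For homogeneity, extend a finite partial isometry $\alpha$ by a back-and-forth over fixed enumerations of $M$ (countability being used here to secure surjectivity): at each step the type of the next point, transported across the current finite partial isometry to the other side, is again \Kat because an isometry preserves all distances, hence is realized, and the union of the approximations is a self-isometry of $\mathrm{M}$ extending $\alpha$.

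Parts two and three are then variations on the same bookkeeping. For part two, both $\mathrm{M},\mathrm{N}\in\mathfrak{U}_{\mathcal{D}}$ realize every \Kat function by part one, so the identical back-and-forth over enumerations of $M$ and $N$, started from the given finite partial isometry, yields a surjective isometry $\mathrm{M}\to\mathrm{N}$; taking the partial isometry to be a single matched pair gives the bare isomorphism claim. For part three only the forward steps survive: with $\mathrm{M}$ Urysohn and $\mathrm{N}\in\mathfrak{M}_{\mathcal{D}}$ countable, enumerate $N$ and extend the given partial isometry one point at a time, each extension existing because the transported type is \Kat and thus realized in $\mathrm{M}$. Since surjectivity onto $\mathrm{M}$ is not demanded, no back-steps are needed and the union is an isometry of $\mathrm{N}$ into $\mathrm{M}$.

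The only genuine content, and the step I would write out most carefully, is the stability of the \Kat condition under the two operations used repeatedly: passing to the type of a new point over the current domain, and transporting a type along a partial isometry. Both reduce to the remark recorded just after Inequalities~(1), that metricity of $\Sp(\mathfrak{t})$ is equivalent to $|\mathfrak{t}(x)-\mathfrak{t}(y)|\le\de(x,y)\le\mathfrak{t}(x)+\mathfrak{t}(y)$ on the finite domain, together with the fact that an isometry fixes every value $\de(x,y)$ and hence every $\mathfrak{t}(x)$. Once this preservation is nailed down, the remainder is the standard countable back-and-forth, and I do not anticipate any obstacle beyond keeping the enumerations and alternation honest.
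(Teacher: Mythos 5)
Your argument is correct: the paper gives no proof of this theorem, citing it as a direct consequence of the general theory of Fra\"{\i}ss\'e limits, and your back-and-forth construction (one-point extensions via realized \Kat functions, with the preservation of the \Kat condition under partial isometries reduced to Inequalities~(1)) is exactly the standard argument underlying that theory. No gaps; the only points worth a passing remark in a full write-up are that $\mathfrak{t}(x)>0$ forces the realization $g(y)$ to lie outside $\dom(\mathfrak{t})$, and that universality automatically yields $\dist(\mathrm{M})=\mathcal{D}$.
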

The last assertion being known as the {\em mapping extension property}. 
For $\mathcal{D}$ universal let  $\boldsymbol{U}_{\mathcal{D}}$ be a particular Urysohn space  in the class  $\mathfrak{U}_{\mathcal{D}}$. 

\begin{cor}\label{cor:Fra1_2}
Let $\mathrm{H}=(H, \de)\in \mathfrak{U}_{\mathcal{D}}$  and $C\subseteq H$.  Then $\restrict{\mathrm{H}}{C}$ is a copy of\/ $\mathrm{H}$ in $\mathrm{H}$ if and only if $\orb(\mathfrak{t})\cap C\not=\emptyset$  for every \Kat function $\mathfrak{t}$ of\/  $\mathrm{H}$ with $\dom(\mathfrak{t})\subseteq C$.
\end{cor}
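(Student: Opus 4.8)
The plan is to reduce the statement to the characterization of Urysohn spaces and their uniqueness, both furnished by Theorem \ref{thm:Fra1}. The first thing I would observe is that $\restrict{\mathrm{H}}{C}$ is a copy of $\mathrm{H}$ if and only if it is isometric to $\mathrm{H}$: an isometry of $\mathrm{H}$ onto $\restrict{\mathrm{H}}{C}$ is in particular an isometry of $\mathrm{H}$ into $\mathrm{H}$ whose image is $C$, and conversely the image of such an isometry is an induced subspace isometric to $\mathrm{H}$. Now $\restrict{\mathrm{H}}{C}$ is countable and lies in $\mathfrak{M}_{\mathcal{D}}$, and by the uniqueness assertion of Theorem \ref{thm:Fra1} any two members of $\mathfrak{U}_{\mathcal{D}}$ are isometric. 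Hence $\restrict{\mathrm{H}}{C}$ is isometric to $\mathrm{H}$ precisely when $\restrict{\mathrm{H}}{C}$ is itself an Urysohn space, i.e.\ $\restrict{\mathrm{H}}{C}\in\mathfrak{U}_{\mathcal{D}}$. Applying the first assertion of Theorem \ref{thm:Fra1} to the space $\restrict{\mathrm{H}}{C}$, this happens exactly when every \Kat function of $\restrict{\mathrm{H}}{C}$ is realized in $C$.

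It then remains to identify the \Kat functions and realizations of $\restrict{\mathrm{H}}{C}$ with the relevant data of $\mathrm{H}$. The key point I would use is \emph{locality}: by Inequalities (1), whether a type function $\mathfrak{t}$ with $\dom(\mathfrak{t})\subseteq C$ is \Kat depends only on the mutual distances $\de(x,y)$ for $x,y\in\dom(\mathfrak{t})$ together with the values $\mathfrak{t}(x)$, and these distances coincide in $\mathrm{H}$ and in $\restrict{\mathrm{H}}{C}$ because $\dom(\mathfrak{t})\subseteq C$. Consequently the \Kat functions of $\restrict{\mathrm{H}}{C}$ are exactly the \Kat functions $\mathfrak{t}$ of $\mathrm{H}$ with $\dom(\mathfrak{t})\subseteq C$. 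For the same reason, a point $y\in C$ realizes such a $\mathfrak{t}$ in $\restrict{\mathrm{H}}{C}$ if and only if $y\in\orb_{\mathrm{H}}(\mathfrak{t})$; thus $\mathfrak{t}$ is realized in $C$ if and only if $\orb(\mathfrak{t})\cap C\neq\emptyset$.

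Chaining the two reductions gives the claim: $\restrict{\mathrm{H}}{C}$ is a copy of $\mathrm{H}$ iff every \Kat function of $\restrict{\mathrm{H}}{C}$ is realized in $C$, iff $\orb(\mathfrak{t})\cap C\neq\emptyset$ for every \Kat function $\mathfrak{t}$ of $\mathrm{H}$ with $\dom(\mathfrak{t})\subseteq C$. The genuinely new content here is light, since the corollary is essentially a restatement of Theorem \ref{thm:Fra1} along the subspace $\restrict{\mathrm{H}}{C}$; accordingly, the only step requiring care, and the one I would write out explicitly, is the locality observation of the second paragraph ensuring that the \Kat property and the orbit of a type function supported in $C$ are insensitive to whether they are computed in $\mathrm{H}$ or in $\restrict{\mathrm{H}}{C}$.
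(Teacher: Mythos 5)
Your argument is correct and is exactly the derivation the paper leaves implicit: the corollary is stated without proof as an immediate consequence of Theorem \ref{thm:Fra1}, and your chain (copy $\Leftrightarrow$ isometric to $\mathrm{H}$ $\Leftrightarrow$ $\restrict{\mathrm{H}}{C}\in\mathfrak{U}_{\mathcal{D}}$ $\Leftrightarrow$ all \Kat functions of $\restrict{\mathrm{H}}{C}$ realized, plus the locality observation identifying these with the \Kat functions of $\mathrm{H}$ supported in $C$ and their orbits) is the intended reading. The locality step you single out is indeed the only point needing any care, and you handle it correctly via Inequalities (1).
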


\begin{thm}\label{thm:orbits}
Let $\mathrm{H}=(H, \de)\in \mathfrak{U}_{\mathcal{D}}$ and  $\mathfrak{t}$ be a \Kat function of\/  $\mathrm{H}$ and  let $\mathcal{D}_\mathfrak{t}=\{n\in \mathcal{D} : n\leq 2\cdot\rank(\mathfrak{t})\}$.  Then the restriction of\/  $\mathrm{H}$ to $\orb(\mathfrak{t})$  is isometric to $\boldsymbol{U}_{\mathcal{D}_\mathfrak{t}}$. (It follows that $\orb(\mathfrak{t})$ is infinite.) 
\end{thm}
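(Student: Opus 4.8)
The plan is to show that $\restrict{\mathrm{H}}{\orb(\mathfrak{t})}$ is itself a countable Urysohn space whose distance set is exactly $\mathcal{D}_\mathfrak{t}$, and then to invoke the uniqueness of Urysohn spaces from Theorem~\ref{thm:Fra1}. Write $O=\orb(\mathfrak{t})$ and $\mathrm{O}=\restrict{\mathrm{H}}{O}$; we may assume $\dom(\mathfrak{t})\neq\emptyset$, since $\dom(\mathfrak{t})=\emptyset$ gives $O=H$, $\mathcal{D}_\mathfrak{t}=\mathcal{D}$ and the assertion trivially. Fix $x_0\in\dom(\mathfrak{t})$ with $\mathfrak{t}(x_0)=\rank(\mathfrak{t})=:r$. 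First I would record two easy facts. Since $\mathrm{H}$ is Urysohn and $\mathfrak{t}$ is \Kat, $\mathfrak{t}$ is realized, so $O\neq\emptyset$, and $\mathrm{O}$ is countable as a subspace of $H$. For the distance set, any $y,z\in O$ satisfy $\de(y,x_0)=\de(z,x_0)=\mathfrak{t}(x_0)=r$ by the definition of the orbit, whence $\de(y,z)\le 2r$ by the triangle inequality; thus $\dist(\mathrm{O})\subseteq\mathcal{D}_\mathfrak{t}$.

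The heart of the argument is to show that every \Kat function of $\mathrm{O}$ with values in $\mathcal{D}_\mathfrak{t}$ is realized in $O$. Let $\mathfrak{s}\colon G\to\mathcal{D}_\mathfrak{t}$ be such a function, with $G\subseteq O$ finite; note $G\cap\dom(\mathfrak{t})=\emptyset$ because $O\subseteq H\setminus\dom(\mathfrak{t})$. I would form the type function $\mathfrak{s}^{+}$ of $\mathrm{H}$ on $G\cup\dom(\mathfrak{t})$ with $\restrict{\mathfrak{s}^{+}}{G}=\mathfrak{s}$ and $\restrict{\mathfrak{s}^{+}}{\dom(\mathfrak{t})}=\mathfrak{t}$, and claim that $\Sp(\mathfrak{s}^{+})$ is metric, which I check through Inequalities~(1). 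For a pair inside $G$ the inequalities hold because $\mathfrak{s}$ is \Kat for $\mathrm{O}$, and for a pair inside $\dom(\mathfrak{t})$ because $\mathfrak{t}$ is \Kat for $\mathrm{H}$. The only new constraints come from a mixed pair $g\in G$, $x\in\dom(\mathfrak{t})$, and here the crucial observation is that $g\in\orb(\mathfrak{t})$ forces $\de(g,x)=\mathfrak{t}(x)$, so the required inequality $|\mathfrak{s}^{+}(g)-\mathfrak{s}^{+}(x)|\le\de(g,x)\le\mathfrak{s}^{+}(g)+\mathfrak{s}^{+}(x)$ collapses to $0\le\mathfrak{s}(g)\le 2\mathfrak{t}(x)$. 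The left inequality is immediate, and the right one holds exactly because $\mathfrak{s}(g)\in\mathcal{D}_\mathfrak{t}$ gives $\mathfrak{s}(g)\le 2r\le 2\mathfrak{t}(x)$, using $\mathfrak{t}(x)\ge r=\rank(\mathfrak{t})$. This single inequality is the one place where the bound $2\cdot\rank(\mathfrak{t})$ in the definition of $\mathcal{D}_\mathfrak{t}$ is used, and I expect it to be the main (if short) obstacle: the content of the theorem is precisely the recognition that this is the amalgamation constraint that must be met, and that $\mathcal{D}_\mathfrak{t}$ is chosen to guarantee it.

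With the claim in hand, since $\mathrm{H}$ is Urysohn, Theorem~\ref{thm:Fra1} yields a realization $y\in H\setminus(G\cup\dom(\mathfrak{t}))$ of $\mathfrak{s}^{+}$. Then $\de(y,x)=\mathfrak{t}(x)$ for all $x\in\dom(\mathfrak{t})$ shows $y\in\orb(\mathfrak{t})=O$, while $\de(y,g)=\mathfrak{s}(g)$ for all $g\in G$ shows $y$ realizes $\mathfrak{s}$ inside $O$. Hence every \Kat function of $\mathrm{O}$ is realized in $O$. Applying this to the singleton type functions $g\mapsto d$ for $d\in\mathcal{D}_\mathfrak{t}\setminus\{0\}$, which are trivially \Kat, shows $\mathcal{D}_\mathfrak{t}\subseteq\dist(\mathrm{O})$, so together with the first paragraph $\dist(\mathrm{O})=\mathcal{D}_\mathfrak{t}$.

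Finally I would conclude. A standard one-point-at-a-time argument turns the realization property just established into the statement that every finite metric space with distance set contained in $\mathcal{D}_\mathfrak{t}$ embeds isometrically into $\mathrm{O}$, and a back-and-forth argument gives homogeneity of $\mathrm{O}$; thus $\mathrm{O}$ is a countable Urysohn space with $\dist(\mathrm{O})=\mathcal{D}_\mathfrak{t}$. In particular $\mathcal{D}_\mathfrak{t}$ is universal and $\mathrm{O}\in\mathfrak{U}_{\mathcal{D}_\mathfrak{t}}$, so by the uniqueness part of Theorem~\ref{thm:Fra1} we get $\restrict{\mathrm{H}}{\orb(\mathfrak{t})}=\mathrm{O}\cong\boldsymbol{U}_{\mathcal{D}_\mathfrak{t}}$. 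Since $\rank(\mathfrak{t})\ge\min(\mathcal{D}\setminus\{0\})=1$ we have $1\in\mathcal{D}_\mathfrak{t}$, so $\boldsymbol{U}_{\mathcal{D}_\mathfrak{t}}$ contains finite subspaces of every cardinality and $\orb(\mathfrak{t})$ is therefore infinite.
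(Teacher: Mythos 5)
Your proof is correct and follows essentially the same route as the paper: both extend a \Kat function on $\orb(\mathfrak{t})$ by $\mathfrak{t}$ itself, check the mixed triangles via $\mathfrak{s}(g)\le 2\cdot\rank(\mathfrak{t})\le 2\mathfrak{t}(x)=2\de(g,x)$, and realize the extension in $\mathrm{H}$ to land back in the orbit. Your wrap-up (re-deriving the Urysohn property of the orbit and invoking uniqueness, plus verifying $\dist(\mathrm{O})=\mathcal{D}_\mathfrak{t}$ exactly) is only a slightly more explicit version of the paper's appeal to Corollary~\ref{cor:Fra1_2}.
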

\begin{proof}
Let $\mathfrak{f}$ be a type function with $\dom(\mathfrak{f})\subseteq \orb(\mathfrak{t})$ and $\Sp(\mathfrak{f})$ metric and $\dist(\Sp(\mathfrak{f}))\subseteq \mathcal{D}_\mathfrak{t}$. Let $x\in \orb(\mathfrak{t})$ and $\mathfrak{g}$ the type function with $\dom(\mathfrak{g})=\dom(\mathfrak{f})\cup \dom(\mathfrak{t})$ and with $\mathfrak{f}\subseteq \mathfrak{g}$. For every $p\in \orb(\mathfrak{t})$ let $\mathfrak{g}(p)=\mathfrak{t}(x)$. In order to check that $\mathfrak{g}$ is metric, given that $\mathfrak{t}$ and $\mathfrak{f}$ are metric, triangles of the form $\{\mathfrak{g}, a,b\}$  with $a\in \dom(\mathfrak{f})$ and $b\in \dom(\mathfrak{t})$ have to be checked to be metric. Note that $\de(\mathfrak{g}, b)=\de(a,b)$. Hence the triangle is metric because $\de(a,b)\geq \rank(\mathfrak{t})$ and $\de(\mathfrak{g},a)\leq 2\cdot\rank(\mathfrak{t})$. 

It follows that $\mathfrak{g}$ is a \Kat function and hence has a realization $q$ according to Theorem \ref{thm:Fra1}. Then $q\in \orb(t)$ because $\mathfrak{t}\subseteq \mathfrak{g}$. Using Corollary \ref{cor:Fra1_2} with $\mathcal{D}_\mathfrak{t}$ for $\mathcal{D}$, we conclude that $\orb(\mathfrak{t})$ is isomorphic to $\boldsymbol{U}_{\mathcal{D}_\mathfrak{t}}$. 
\end{proof}

\begin{cor}\label{cor:orbits}
Let $\mathfrak{t}$ be a \Kat function of $\boldsymbol{U}_{\mathcal{D}}$ with $rank(\mathfrak{t})=r$ and $s\leq 2r\in \mathcal{D}$ and $x\in \orb(\mathfrak{t})$. Then the type function $\mathfrak{p}$ with $\dom(\mathfrak{p})=\dom(\mathfrak{t})\cup \{x\}$ and $\mathfrak{t}\subseteq \mathfrak{p}$ and $\mathfrak{p}(x)=s$ is a \Kat function.
\end{cor}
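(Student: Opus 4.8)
The plan is to verify directly that $\Sp(\mathfrak{p})$ is metric, using the characterization supplied by Inequalities~(1). Since $\dom(\mathfrak{p})=\dom(\mathfrak{t})\cup\{x\}$ is a finite subset of $\boldsymbol{U}_{\mathcal{D}}$, every triangle lying entirely inside $\dom(\mathfrak{p})$ is already metric, so it suffices to check that for all $a,b\in \dom(\mathfrak{p})$ one has $|\mathfrak{p}(a)-\mathfrak{p}(b)|\leq \de(a,b)\leq \mathfrak{p}(a)+\mathfrak{p}(b)$. Note first that $x\notin\dom(\mathfrak{t})$ by the definition of $\orb(\mathfrak{t})$, so $\mathfrak{p}$ is a genuine one-point extension of $\mathfrak{t}$.

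First I would dispose of the case $a,b\in \dom(\mathfrak{t})$: here $\mathfrak{p}$ agrees with $\mathfrak{t}$, and since $\mathfrak{t}$ is a \Kat function these inequalities hold by hypothesis. The only genuinely new triangles are those of the form $\{\mathfrak{p},a,x\}$ with $a\in \dom(\mathfrak{t})$.

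For such a triangle the key observation is that $x\in \orb(\mathfrak{t})$ forces $\de(a,x)=\mathfrak{t}(a)=\mathfrak{p}(a)$, while $\mathfrak{p}(x)=s$. Thus the two inequalities to verify read $|\mathfrak{p}(a)-s|\leq \mathfrak{p}(a)\leq \mathfrak{p}(a)+s$. The right-hand inequality is immediate since $s\geq 0$. For the left-hand inequality I would split on the sign of $\mathfrak{p}(a)-s$: if $\mathfrak{p}(a)\geq s$ it is trivial, and if $\mathfrak{p}(a)<s$ it reduces to $s\leq 2\mathfrak{p}(a)$. This is exactly where the hypothesis $s\leq 2r$ enters, since $\mathfrak{p}(a)=\mathfrak{t}(a)\geq \rank(\mathfrak{t})=r$ by definition of the rank, whence $2\mathfrak{p}(a)\geq 2r\geq s$.

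Finally I would note that $\dist(\Sp(\mathfrak{p}))\subseteq \mathcal{D}$ holds automatically, as the only distances appearing are $\mathfrak{p}(a)=\mathfrak{t}(a)\in\mathcal{D}$ and $\mathfrak{p}(x)=s\in\mathcal{D}$. There is no real obstacle here; the proof is a short computation, and the only point requiring attention is recognizing that the single reduced inequality $s\leq 2\mathfrak{p}(a)$ is precisely what the bound $s\leq 2r$ is tailored to supply.
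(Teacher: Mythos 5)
Your proof is correct and is exactly the computation the paper intends: the paper states this as an immediate corollary without a written proof, relying on the reduction to Inequalities~(1), and your verification of $|\mathfrak{t}(a)-s|\leq \de(a,x)=\mathfrak{t}(a)\leq \mathfrak{t}(a)+s$ via $s\leq 2r\leq 2\mathfrak{t}(a)$ is that argument carried out in full. No gaps.
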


\begin{lem}\label{lem:finiterem}
Let $\mathrm{M}=(M, \de)$ be isometric to $ \boldsymbol{U}_{\mathcal{D}}$  and $A$ and $B$ finite subsets of $M$ with $A\cap B=\emptyset$. Then there exists an isomorphism $\alpha$ of\/ $\mathrm{M}$ to $\restrict{\mathrm{M}}{(M\setminus B})$ with $\alpha(a)=a$ for all $a\in A$. 
\end{lem}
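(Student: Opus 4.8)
The plan is to recognize the target space $\restrict{\mathrm{M}}{(M\setminus B)}$ as itself a member of $\mathfrak{U}_{\mathcal{D}}$, and then to invoke the extension property of Theorem \ref{thm:Fra1} to promote the identity map on $A$ to a full isomorphism. In this way the lemma splits into two essentially independent tasks: first, verifying that deleting the finite set $B$ does not destroy the Urysohn property; and second, transporting the finite partial isometry $\mathrm{id}_A$ to a global one. No metric computation should be needed along the way, since the \Kat condition, being an inequality on the inherited distance function, is preserved verbatim under restriction to $M\setminus B$.

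First I would show that $\restrict{\mathrm{M}}{(M\setminus B)}\in \mathfrak{U}_{\mathcal{D}}$. By the first assertion of Theorem \ref{thm:Fra1} it suffices to check that every \Kat function $\mathfrak{t}$ of $\restrict{\mathrm{M}}{(M\setminus B)}$ is realized in $M\setminus B$. Fix such a $\mathfrak{t}$; since $\dom(\mathfrak{t})\subseteq M\setminus B\subseteq M$ and the distance function is inherited, $\mathfrak{t}$ is also a \Kat function of $\mathrm{M}$. By Theorem \ref{thm:orbits} the orbit $\orb_{\mathrm{M}}(\mathfrak{t})$ is infinite, whereas $B$ is finite; hence $\orb_{\mathrm{M}}(\mathfrak{t})\setminus B\neq\emptyset$. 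Any point in this difference lies in $M\setminus B$ and realizes $\mathfrak{t}$ there, which is exactly what is required. Thus $\restrict{\mathrm{M}}{(M\setminus B)}$ is a countable Urysohn space in $\mathfrak{M}_{\mathcal{D}}$.

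With both $\mathrm{M}$ and $\restrict{\mathrm{M}}{(M\setminus B)}$ now members of $\mathfrak{U}_{\mathcal{D}}$, I would conclude as follows. Because $A\cap B=\emptyset$ we have $A\subseteq M\setminus B$, so $\restrict{\mathrm{M}}{A}$ is at once a finite subspace of $\mathrm{M}$ and of $\restrict{\mathrm{M}}{(M\setminus B)}$, and the identity on $A$ is an isometry between these two finite subspaces. The extension assertion of Theorem \ref{thm:Fra1}, applied with $\mathrm{N}=\restrict{\mathrm{M}}{(M\setminus B)}$, then extends this identity to an isometry $\alpha$ of $\mathrm{M}$ onto $\restrict{\mathrm{M}}{(M\setminus B)}$; by construction $\alpha(a)=a$ for every $a\in A$, yielding the desired isomorphism. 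The only substantive point, and the step I expect to carry the weight, is the passage from the finiteness of $B$ to the survival of a realization of every type, i.e.\ the appeal to Theorem \ref{thm:orbits} that each orbit is infinite; everything after that is a routine invocation of homogeneity.
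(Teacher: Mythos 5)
Your proposal is correct and follows essentially the same route as the paper: both arguments rest on the observation that any \Kat function of $\restrict{\mathrm{M}}{(M\setminus B)}$ is a \Kat function of $\mathrm{M}$ whose (infinite) orbit cannot be exhausted by the finite set $B$, so it is realized in $M\setminus B$, after which the extension property of Theorem \ref{thm:Fra1} promotes the identity on $A$ to the desired isometry. The only cosmetic difference is that you certify $\restrict{\mathrm{M}}{(M\setminus B)}\in\mathfrak{U}_{\mathcal{D}}$ via the first assertion of Theorem \ref{thm:Fra1} while the paper cites Corollary \ref{cor:Fra1_2}, which is the same criterion.
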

\begin{proof}
Let $\restrict{\mathrm{M}}{(M\setminus B})=\mathrm{N}$ and $\mathfrak{t}$ a \Kat  function of $\mathrm{N}$ and hence a \Kat function of $\mathrm{M}$.   Because $\orb_\mathrm{M}(\mathfrak{t})$ is infinite there is a $y\in \orb(\mathfrak{t})\setminus A$.   Hence $\mathfrak{t}$ is realized in $N$, implying $\restrict{\mathrm{M}}{N}$ is isometric to $\mathrm{M}$ according to Corollary \ref{cor:Fra1_2}.  It follows from Theorem \ref{thm:Fra1} that the identity map on $A$ has an extension to an isometry of\/    $\mathrm{M}$ to $\restrict{\mathrm{M}}{N}$. 
\end{proof}

\section{The structure of $\boldsymbol{U}_{\mathcal{D}}$}

\noindent
Let $\mathcal{D}$ be a universal set of numbers  and  $\mathrm{M}=(M;\de)\in \mathfrak{U}_{\mathcal{D}}$.
\vskip 5pt

 For $r$ a positive real   let    $r^{\langle -\rangle}=\max\big([0,r)\cap \mathcal{D}\big)$, the largest number in $\mathcal{D}$ smaller than $r$. For $r<\max\mathcal{D}$ let $r^{\langle +\rangle}=\min\big((r,\max\mathcal{D}]\cap\mathcal{D}\big)$, the smallest number in $\mathcal{D}$ larger than $r$.  For $r=\max\mathcal{D}$ let $r^{\langle +\rangle}=r$. The number $r\in \mathcal{D}$ is a {\em jump number} if $r^{\langle +\rangle}> 2\cdot r$.
 
\begin{lem}\label{lem:reach}
Let $0<m\in \mathcal{D}$ so that the set $S$ of numbers $r\in \mathcal{D}$ with $r^{\langle +\rangle}>m+r$ 
  is not empty. Then $\min S$ is a jump number.
\end{lem}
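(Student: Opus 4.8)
The plan is to argue by contradiction using the four-values characterization of universality, Lemma~\ref{lem:triangleen}. Write $s=\min S$ and $v=s^{\langle+\rangle}$, and suppose $s$ is \emph{not} a jump number, so that $v\le 2s$. Since $s\in S$ we have $v>m+s$, and together with $v\le 2s$ this gives $m<v-s\le s$. The decisive structural input is the minimality of $s$: every element of $\mathcal{D}$ strictly below $s$ lies outside $S$, hence has a gap of size at most $m$. I would record two consequences. First, setting $e:=(v-s)^{\langle-\rangle}$ (well defined and positive, as $1\le m<v-s$), the element $e$ lies below $s$, so $e^{\langle+\rangle}\le e+m$; since $e^{\langle+\rangle}\ge v-s$ this yields $e\ge v-s-m$. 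Second, letting $\gamma$ be the least element of $\mathcal{D}$ with $\gamma\ge v-s$, the same gap bound gives $v-s\le\gamma\le e+m$.

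Next I would exhibit a configuration defeating Lemma~\ref{lem:triangleen}. Take a base $b,c$ with $\de(b,c)=\gamma$ and two further points with $\de(a_0,b)=e,\ \de(a_0,c)=m$ and $\de(a_1,b)=s,\ \de(a_1,c)=v$; all these distances lie in $\mathcal{D}$ and are positive, so the four points are distinct. A direct check, using only $v-s\le\gamma\le e+m$ and $\gamma\le s+v$, shows that both triangles $a_0,b,c$ and $a_1,b,c$ are metric. The binding inequalities are exactly the two bounds on $\gamma$ just established, which is why that choice of base length is forced.

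Finally I would read off the constraints on a completing distance $t=\de(a_0,a_1)$. The triangles $a_0,a_1,b$ and $a_0,a_1,c$ force
\[
t\in[\max(s-e,\,v-m),\ \min(s+e,\,m+v)]=[\,v-m,\ s+e\,],
\]
the last equality using $v-m>s>s-e$ and $s+e<v<m+v$. Here $v-m>s$ because $v>m+s$, and $s+e<v$ because $e<v-s$, so this is a subinterval of the open gap $(s,v)$; it is nonempty as a set of reals because $e\ge v-s-m$ gives $v-m\le s+e$. Since $v=s^{\langle+\rangle}$, no element of $\mathcal{D}$ lies in $(s,v)$, so no admissible $t\in\mathcal{D}$ exists. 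This contradicts Lemma~\ref{lem:triangleen}; hence $v>2s$, i.e.\ $s=\min S$ is a jump number.

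I expect the main obstacle to be the simultaneous fit of the two demands on the base length $\gamma$: the triangle $a_1,b,c$ forces $\gamma\ge v-s$, while $a_0,b,c$ forces $\gamma\le e+m$, and a common admissible $\gamma\in\mathcal{D}$ exists only because the hypothesis $m\in\mathcal{D}$ together with the minimality of $s$ bounds every gap below $s$ by $m$, squeezing $\gamma$ into the narrow window $[\,v-s,\ e+m\,]$. Trapping the apex distance $t$ inside $(s,v)$ while keeping both base triangles realizable is the whole game, and it succeeds exactly on the boundary permitted by these gap estimates.
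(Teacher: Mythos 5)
Your proof is correct and follows essentially the same route as the paper: both arguments contradict the four-values condition of Lemma~\ref{lem:triangleen} using two triangles on a shared base of length $\gamma$ (your $\gamma$ is exactly the paper's auxiliary ``minimal $s$ with $r^{\langle+\rangle}\le s+r$'', and your $e$ is its predecessor $s^{\langle-\rangle}$), with apex distances $(e,m)$ and $(s,v)$ forcing the completing distance into the empty gap $(s,v)\cap\mathcal{D}$. The only cosmetic differences are that you absorb the paper's explicit case $m\ge r$ into the contradiction hypothesis and swap the roles of $b$ and $c$.
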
 
\begin{proof}
Let $r=\min S$.   If $m\geq r$ then $r^{\langle +\rangle}>m+r$ implies  $r^{\langle +\rangle}>r+ r$.   Let $m<r$ and assume for a contradiction that $r^{\langle +\rangle}\leq r+r$. Let $s\in \mathcal{D}$ be minimal with $r^{\langle +\rangle}\leq s+r$. Then $s\leq r$ and $0<s$ hence 
\[
0<m\leq s^{\langle -\rangle}<s\leq r<m+r\leq s^{\langle -\rangle}+r<r^{\langle +\rangle}\leq s+r\leq r+r. 
\]
It follows that the largest number in $\mathcal{D}$ less than or equal to $s^{\langle -\rangle}+r$ is $r$.  Because $s^{\langle -\rangle}<r$ it follows from the minimality of $r$ that $s=(s^{\langle -\rangle})^{\langle +\rangle}\leq m+s^{\langle -\rangle}$.  Let  $A=\{a,b,c\}$ and $A'=\{a',b,c\}$ be two triangles  with $\de(a,c)=s^{\langle -\rangle}$ and $\de(b,c)=s$ and $\de(a',c)=r$ and $\de(a,b)=m$ and $\de(a',b)=r^{\langle +\rangle}$. Then $A,A'\in \mathfrak{M}_{\mathcal{D}}$.

If there is a number $t\in \mathcal{D}$ so that the space $a,b,c,a'$ with $\de(a,a')=t$ is an element of $\mathfrak{M}_{\mathcal{D}}$ then $t\leq  r$ because  the triangle $a,a',c$ is metric. On the other hand the triangle $a,a',b$ is metric  and hence $r^{\langle +\rangle}\leq t+m \leq r+m<r^{\langle +\rangle}$. Hence there is no such number $t$ and it follows from Lemma   \ref{lem:triangleen} that $\mathcal{D}$ is not universal.

\end{proof}

\begin{defin}\label{defin:block}
The set $\mathcal{B}\subseteq \mathcal{D}$ is a {\em block} of\/ $\mathcal{D}$ if there exists an enumeration  $(b_i: i\in n+1) $ of\/ $\mathcal{B}$ so that:  
\begin{enumerate}
\item $0<b_i<b_{i+1}$ for all $i\in n$.
\item $b_0>b_0^{\langle -\rangle}+b_0^{\langle -\rangle}$.
\item  $b_{i+1}=b_i^{\langle +\rangle}$ for all $i\in n$.
\item $b_i+b_0\geq b_{i+1}$ for all $i\in n$.
\end{enumerate}
\end{defin}
\noindent
Lemma \ref{lem:reach} implies:
\begin{thm}\label{thm:distset}
The  distance set $\mathcal{D}$  of a universal  metric space   is the union of disjoint blocks $\mathcal{B}_i$ 
 so that: 
 \begin{enumerate}
 \item $x<y$ for all $x\in \mathcal{B}_i$ and $y\in \mathcal{B}_{i+1}$.
 \item $2\cdot \max\mathcal{B}_i<\min\mathcal{B}_{i+1}$.
 \item  $x+\min(\mathcal{B}_i)\geq x^{\langle +\rangle}$ for all $x\in \mathcal{B}_i$.
 \end{enumerate}
 \end{thm}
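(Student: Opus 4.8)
The plan is to read the blocks off directly from the jump numbers and to let Lemma \ref{lem:reach} do all the work of bounding the gaps inside a block. First I would record two trivial observations: $0$ is always a jump number, since $0^{\langle +\rangle}>0=2\cdot 0$, whereas $\max\mathcal{D}$ is never a jump number, since $(\max\mathcal{D})^{\langle +\rangle}=\max\mathcal{D}$ does not exceed $2\max\mathcal{D}$. As $\mathcal{D}$ is finite there are only finitely many jump numbers; list them as $0=j_0<j_1<\dots<j_k<\max\mathcal{D}$. I then take as candidate blocks the runs of $\mathcal{D}$ between consecutive jump numbers: $\mathcal{B}_i=\{x\in\mathcal{D}: j_i<x\le j_{i+1}\}$ for $i<k$, and $\mathcal{B}_k=\{x\in\mathcal{D}: x>j_k\}$. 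These are pairwise disjoint and cover all positive elements of $\mathcal{D}$, and property (1) of the theorem is immediate from this interval description.

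Next I would verify that each $\mathcal{B}_i$ really is a block in the sense of Definition \ref{defin:block}, writing its increasing enumeration as $b_0<\dots<b_n$ with $b_0=j_i^{\langle +\rangle}$ and $b_n=\max\mathcal{B}_i$. Conditions (1) and (3) of the definition are automatic, since the $b_\ell$ are consecutive elements of $\mathcal{D}$. Condition (2), namely $b_0>2b_0^{\langle -\rangle}$, holds because $b_0^{\langle -\rangle}=j_i$ is a jump number, so $b_0=j_i^{\langle +\rangle}>2j_i$. The same fact yields property (2) of the theorem for free: $\min\mathcal{B}_{i+1}=j_{i+1}^{\langle +\rangle}>2j_{i+1}=2\max\mathcal{B}_i$, since $j_{i+1}=\max\mathcal{B}_i$ is a jump number.

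The only substantive point, and where Lemma \ref{lem:reach} enters, is condition (4) of Definition \ref{defin:block}, which is exactly property (3) of the theorem for the non-maximal elements of a block. Fix $\mathcal{B}_i$, set $m=\min\mathcal{B}_i=j_i^{\langle +\rangle}$, and consider $S=\{r\in\mathcal{D}: r^{\langle +\rangle}>m+r\}$. I would first check by hand that no $r\le j_i$ lies in $S$: for $r<j_i$ one has $r^{\langle +\rangle}\le j_i<m\le m+r$, and for $r=j_i$ one has $r^{\langle +\rangle}=m\le m+r$. Hence, if $S\neq\emptyset$, then $\min S\ge m>j_i$, while Lemma \ref{lem:reach} (applied with this $m$) tells us $\min S$ is a jump number. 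Since the jump numbers below $m$ are precisely $j_0,\dots,j_i$, a jump number $\ge m$ must be one of $j_{i+1},\dots,j_k$; this forces $\min S\ge j_{i+1}$ when $i<k$, and is outright impossible when $i=k$ because $j_k$ is the largest jump number. Either way every $x\in\mathcal{B}_i$ with $x<\max\mathcal{B}_i$ satisfies $x\notin S$, i.e. $x^{\langle +\rangle}\le x+m$, which is condition (4). I expect this step to be the crux: Lemma \ref{lem:reach} converts the existence of a single oversized gap inside an interval into the existence of a jump number strictly between two consecutive jump numbers, which is absurd.

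Finally I would collect the pieces. The only elements escaping the gap bound above are the maximal elements $\max\mathcal{B}_i$ of the non-final blocks, for which $x^{\langle +\rangle}$ falls into $\mathcal{B}_{i+1}$; for these, property (3) is read through property (2), and for $x=\max\mathcal{D}$ it holds trivially because $x^{\langle +\rangle}=x$. I would also remark that $0=j_0$ lies in no block, so strictly speaking the decomposition is of $\mathcal{D}\setminus\{0\}$, with $0$ serving as the left boundary of $\mathcal{B}_0$. Together these verify that $\mathcal{D}$ is the disjoint union of the blocks $\mathcal{B}_i$ with the three stated properties.
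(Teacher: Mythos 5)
Your proof is correct and is precisely the argument the paper leaves implicit: the paper offers no proof beyond the remark that Lemma~\ref{lem:reach} implies the theorem, and your decomposition of $\mathcal{D}\setminus\{0\}$ into maximal runs between consecutive jump numbers, with Lemma~\ref{lem:reach} ruling out an oversized gap strictly between two consecutive jump numbers, is exactly the intended filling-in. Your side observation is also right that item~3 fails literally for $x=\max\mathcal{B}_i$ of a non-final block (there $x^{\langle +\rangle}>2x\geq x+\min\mathcal{B}_i$), so the inequality should be read as applying to the non-maximal elements only, matching condition~4 of Definition~\ref{defin:block}.
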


\vskip 4pt
\noindent
Let $r$ be the maximum of a block $\mathcal{B}$ of $\mathcal{D}$, that is a jump number.
\vskip 3pt

It follows that the relation $\stackrel{r}{\sim}$ on $M$ with $x\stackrel{r}{\sim} y$ iff $\de(x,y)\leq r$ is an equivalence relation and that every automorphism of $\mathrm{M}$ maps elements of $M{/\negthickspace\stackrel{r}{\sim}}$ onto elements of $M{/\negthickspace\stackrel{r}{\sim}}$. Let $A,B,C\in M{/\negthickspace\stackrel{r}{\sim}}$ with $A\not=C\not=B$ and $a\in A$ and $b\in B$ and $c\in C$. Then there exists an automorphism $\alpha$ of $\mathrm{M}$ to $\mathrm{M}$ mapping $a$ to $b$ but $\alpha(c)\not\in B$ because $\de(a,c)> r$ and $\de(b,x)\leq r$ for all $x\in B$. If $x\in A$ then $\de(a,x)\leq r$, hence $\de(b,\alpha(x))\leq r$, hence $\alpha(x)\in B$,  hence $\restrict{\mathrm{M}}{A}$ is isometric to $\restrict{\mathrm{M}}{B}$.   Let $\mathfrak{p}$ be a \Kat function of $\mathrm{M}$ with $\dom(\mathfrak{p})\subseteq A$ and $\mathfrak{p}(x)\leq r$ for all $x\in \dom(\mathfrak{p})$. Then $\orb(\mathfrak{p})\subseteq A$. Hence $\restrict{\mathrm{M}}{A}$ is a homogeneous metric space with $[0,r]\cap \mathcal{D}$ as set of distances. For $x\in M$ let $[x]_r$ denote the $\stackrel{r}{\sim}$ equivalence class containing $x$.

\begin{defin}\label{defin:distequ}
For $A,B\in M{/\negthickspace\stackrel{r}{\sim}}$ let \\$\de(A,B)=\{\de(a,b): \text{$a\in A$ and $b\in B$}\}$ and let \\$\de_{\min}(A,B)=\min\de(A,B)$ and $\de_{\max}(A,B)=\max\de(A,B)$. 
\end{defin}

\begin{lem}\label{lem:pairdist}
Let   $A,B\in M{/\negthickspace\stackrel{r}{\sim}}$ with $A\not=B$ and $a\in A$ and $n\in \de(A,B)$, then:
\begin{enumerate}
\item $n>2r$.
\item There is $b\in B$ with $\de(a,b)=n$.
\item  $\de_{\max}(A,B)-\de_{\min}(A,B)\leq r$.
\end{enumerate}
\end{lem}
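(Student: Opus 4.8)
The plan is to prove the three parts in order, deriving (3) from (2), with part (2) as the crux. Part (1) is immediate: since $n\in\de(A,B)$ there are $a'\in A$ and $b'\in B$ with $\de(a',b')=n$, and as $A\ne B$ these points lie in distinct $\str{r}$-classes, so $\de(a',b')>r$; because $n\in\mathcal D$ this forces $n\ge r^{\langle+\rangle}$, and $r$, being the maximum of a block, is a jump number with $r^{\langle+\rangle}>2r$, whence $n>2r$.

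For part (2) I would argue by realizing a single auxiliary \Kat function. Fix witnesses $a_0\in A$, $b_0\in B$ with $\de(a_0,b_0)=n$ and let $a\in A$ be arbitrary. Since $a,a_0\in A$ we have $\de(a,a_0)\le r$, so the triangle inequality yields $n-r\le\de(a,b_0)\le n+r$. Now consider the type function $\mathfrak t$ with $\dom(\mathfrak t)=\{a,b_0\}$, $\mathfrak t(a)=n$ and $\mathfrak t(b_0)=r$. By the inequalities (1), checking that $\mathfrak t$ is a \Kat function reduces to verifying $|n-r|\le\de(a,b_0)\le n+r$; as $n>2r>r$ this is exactly the bound just established, so $\mathfrak t$ is \Kat. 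By Theorem \ref{thm:Fra1} it is realized by some $b\in M$, and then $\de(a,b)=\mathfrak t(a)=n$, while $\de(b,b_0)=r\le r$ forces $b\str{r}b_0$, hence $b\in B$. This is the required point.

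For part (3), choose $a_1\in A$ and $b_1\in B$ with $\de(a_1,b_1)=\de_{\min}(A,B)$. Applying part (2) to $a_1$ and the value $\de_{\max}(A,B)\in\de(A,B)$ produces a $b\in B$ with $\de(a_1,b)=\de_{\max}(A,B)$; since $b,b_1\in B$ we have $\de(b,b_1)\le r$, so $\de_{\max}(A,B)=\de(a_1,b)\le\de(a_1,b_1)+\de(b_1,b)\le\de_{\min}(A,B)+r$, which is (3). The main obstacle is part (2), on which the rest rests: the delicate point is that the single constraint $\mathfrak t(b_0)=r$ does double duty, simultaneously pinning $\de(a,b)$ to $n$ and keeping the realization inside the class $B$ (by transitivity of $\str{r}$), and that the upper \Kat inequality $\de(a,b_0)\le n+r$ is available precisely because $a$ and $a_0$ are $\str{r}$-equivalent. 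Once the metric check goes through, realization is automatic from the Urysohn property.
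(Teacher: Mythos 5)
Your proof is correct, but items (1) and (2) take genuinely different routes from the paper. For (1) the paper argues by contradiction inside $\mathrm{M}$: if $n\leq 2r$ it realizes the \Kat function $\mathfrak{k}$ with $\dom(\mathfrak{k})=\{a,b\}$ and $\mathfrak{k}(a)=\mathfrak{k}(b)=r$ to produce a point lying in both $A$ and $B$, forcing $A=B$; you instead read $n>2r$ directly off the block decomposition, using that $r$, as the maximum of a block, is a jump number (Theorem \ref{thm:distset}). Your version uses only the arithmetic of $\mathcal{D}$, the paper's only the Urysohn property; both are sound (and in the degenerate case $r=\max\mathcal{D}$ the hypothesis $A\not=B$ is vacuous, so the jump-number appeal is legitimate). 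For (2) the paper realizes a three-point \Kat function on $\{a,a',b'\}$ with $\mathfrak{k}(a)=n$, $\mathfrak{k}(a')=\de(a,b')$, $\mathfrak{k}(b')=\de(a,a')$, chosen so that $\Sp(\mathfrak{k})$ is congruent to the four-point configuration under the swap $a\leftrightarrow\mathfrak{k}$, $a'\leftrightarrow b'$ (metricity then comes for free), and concludes $b\in B$ from $\de(b,b')=\de(a,a')\leq r$. Your two-point \Kat function with $\mathfrak{t}(a)=n$, $\mathfrak{t}(b_0)=r$ is simpler: only the single inequality $n-r\leq\de(a,b_0)\leq n+r$ needs checking, and it follows from $\de(a,a_0)\leq r$ exactly as you say; the price is that your realization $b$ retains less of the original configuration (only $\de(b,b_0)=r$ rather than the mirrored distances), but the lemma needs nothing more. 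Item (3) is essentially the paper's argument with the roles of $\de_{\min}$ and $\de_{\max}$ interchanged.
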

\begin{proof}
Let $a\in A$ and $b\in B$ with $\de(a,b)=n\leq 2r$ and $\mathfrak{k}$ the \Kat function with $\dom(\mathfrak{k})=\{a,b\}$ and $\mathfrak{k}(a)=r$ and $\mathfrak{k}(b)=r$. Let $c\in \orb(\mathfrak{k})$. Then $c\in A$ and $c\in B$, hence $A=B$. 

Let $a'\in A$ and $b'\in B$ with $\de(a',b')=n$ and $\mathfrak{k}$ the type function with $\dom(\mathfrak{k})=\{a',b',a\}$ and $\mathfrak{k}(a)=n$ and $\mathfrak{k}(a')=\de(a,b')$ and $\mathfrak{k}(b')=\de(a,a')$. Because $\Sp(\mathfrak{k})$ is metric the type function $\mathfrak{k}$ is a \Kat function and hence there is an element $b\in \orb(\mathfrak{k})$. Because $\de(b,b')<r$ the point $b$ is an element of $B$ and $\de(a,b)=\mathfrak{k}(a)=n$.

Let $a\in A$ and $b\in B$ with $\de(a,b)=\de_{\max}(A,B)$. There is $c\in B$ with $\de(a,c)=\de_{\min}(A,B)$. The triangle $a,b,c$ is metric with $\de(b,c)\leq r$. 
\end{proof}

\begin{lem}\label{lem:trdist}
The triangle $A,B,C$ is metric for all $A,B,C\in  M{/\negthickspace \stackrel{r}{\sim}}$ under the distance function $\de_{\min}$. 
\end{lem}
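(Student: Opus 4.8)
The plan is to reduce the triangle inequality for $\de_{\min}$ to the fact that $\mathrm{M}$ itself is a metric space, by realizing the two relevant minimal distances as genuine distances emanating from a single common point of the middle class. Fix three classes $A,B,C\in M{/\negthickspace \stackrel{r}{\sim}}$. Since the claim is symmetric in the three vertices and is trivial whenever two of them coincide (then one of the $\de_{\min}$ values is $0$), it suffices to establish the single inequality $\de_{\min}(A,C)\le \de_{\min}(A,B)+\de_{\min}(B,C)$ under the assumption that $A,B,C$ are pairwise distinct.

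The central point is that one may \emph{not} simply choose a pair $a,b$ attaining $\de_{\min}(A,B)$ and an independent pair $b',c$ attaining $\de_{\min}(B,C)$: in general $b\ne b'$, and then the triangle $a,b',c$ in $\mathrm{M}$ gives no usable bound on $\de(a,b')$. Instead I would fix one point $b\in B$ and pull both minimal distances back to it using part (2) of Lemma \ref{lem:pairdist}. Concretely, $\de_{\min}(A,B)=\min\de(A,B)$ is a member of $\de(A,B)=\de(B,A)$, so applying Lemma \ref{lem:pairdist}(2) with $b$ as the prescribed point of the first class $B$ and $A$ as the second class yields some $a\in A$ with $\de(b,a)=\de_{\min}(A,B)$. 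The same lemma, applied with $b$ again fixed and $C$ as the second class, yields $c\in C$ with $\de(b,c)=\de_{\min}(B,C)$.

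Now $a,b,c$ is an honest triangle inside the metric space $\mathrm{M}$, so $\de(a,c)\le \de(a,b)+\de(b,c)=\de_{\min}(A,B)+\de_{\min}(B,C)$. Since $a\in A$ and $c\in C$ we have $\de_{\min}(A,C)\le \de(a,c)$, which is exactly the desired inequality. Repeating the argument with each of the three classes in turn playing the role of the common center gives all three triangle inequalities, so the triangle $A,B,C$ is metric under $\de_{\min}$.

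The only genuine obstacle is the one isolated above: the naive choice of realizing pairs need not share a common point in $B$, and the entire argument hinges on Lemma \ref{lem:pairdist}(2), which guarantees that from an arbitrarily prescribed point of one class \emph{every} distance occurring between that class and a second class is actually attained. The degenerate cases where two of $A,B,C$ coincide should be noted, but they are immediate.
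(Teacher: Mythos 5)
Your proof is correct and is essentially the paper's own argument: fix a single point in one of the classes, use Lemma \ref{lem:pairdist}(2) to realize both relevant minimal distances from that common point, and then invoke the triangle inequality in $\mathrm{M}$. The paper centers the construction at a point of $A$ and reads off $\de_{\min}(B,C)\leq\de_{\min}(A,B)+\de_{\min}(A,C)$, while you center at $B$; this is only a relabeling, and your explicit remarks about the degenerate cases and the need for a common base point are sound.
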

\begin{proof}
Let $a\in A$. According to Lemma \ref{lem:pairdist} there exists $b\in B$ and $c\in C$ with $\de(a,b)=\de_{\min}(A,B)$ and $\de(a,c)=\de_{\min}(A,C)$. Hence $\de(a,b)+\de(a,c)\geq \de(b,c)\geq \de_{\min}(B,C)$. 
\end{proof}

\begin{lem}\label{lem:embedmin}
Let $(A_i; i\in n\in \omega)$ be pairwise different $\stackrel{r}{\sim}$-equivalence classes. Then there exist points $(a_i; i\in n)$ with $a_i\in A_i$ and $\de(a_i,a_j)=\de_{\min}(A_i,A_j)$ for all $i,j\in n$. 
\end{lem}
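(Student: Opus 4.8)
The plan is to construct the points $a_i$ one at a time by induction on $n$, realizing a suitable \Kat function at each stage. The base case ($n\le 1$) is trivial: pick any $a_0\in A_0$. The tempting first move for the inductive step is to realize in $\mathrm{M}$ the abstract finite metric space on $n$ points whose pairwise distances are the numbers $\de_{\min}(A_i,A_j)$; this space lies in $\mathfrak{M}_{\mathcal{D}}$ precisely because $\de_{\min}$ obeys the triangle inequality on the classes (Lemma \ref{lem:trdist}), so by the Urysohn property it embeds into $\mathrm{M}$. The difficulty --- and the main obstacle of the proof --- is that such an embedding produces points with the correct mutual distances but with no control over which $\str{r}$-classes they occupy: the realized points do lie in pairwise distinct classes, since all relevant distances exceed $2r$ by Lemma \ref{lem:pairdist}(1), but not necessarily in the prescribed classes $A_0,\dots,A_{n-1}$.

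To overcome this I would add, at each inductive step, an \emph{anchor} that forces the new point into the correct class. Suppose $a_0,\dots,a_{k-1}$ have been found with $a_i\in A_i$ and $\de(a_i,a_j)=\de_{\min}(A_i,A_j)$ for $i,j<k$. Choose any $c\in A_k$ and let $\mathfrak{t}$ be the type function with $\dom(\mathfrak{t})=\{a_0,\dots,a_{k-1},c\}$, with $\mathfrak{t}(a_j)=\de_{\min}(A_k,A_j)$ for $j<k$, and with $\mathfrak{t}(c)=r$. The point of setting $\mathfrak{t}(c)=r$ is that any realization $y$ then satisfies $\de(y,c)=r$, whence $y\str{r}c$ and so $y\in A_k$; this is exactly the device that removes the obstacle above.

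It remains to check that $\mathfrak{t}$ is a \Kat function, which by the metric characterization (the inequalities (1)) reduces to the two-variable inequalities on $\dom(\mathfrak{t})$. For a pair $a_i,a_j$ these are precisely the triangle inequality for $\de_{\min}$ on the classes $A_k,A_i,A_j$, supplied by Lemma \ref{lem:trdist}. For a pair $c,a_j$ one uses $\de_{\min}(A_k,A_j)\le\de(c,a_j)\le\de_{\max}(A_k,A_j)$ together with $\de_{\max}(A_k,A_j)-\de_{\min}(A_k,A_j)\le r$ (Lemma \ref{lem:pairdist}(3)) and $\de_{\min}(A_k,A_j)>2r$ (Lemma \ref{lem:pairdist}(1)): the upper inequality $\de(c,a_j)\le r+\de_{\min}(A_k,A_j)$ follows from the gap bound, while the lower inequality $\de_{\min}(A_k,A_j)-r\le\de(c,a_j)$ holds trivially because $\de(c,a_j)\ge\de_{\min}(A_k,A_j)$. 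Granting this, Theorem \ref{thm:Fra1} yields a realization $a_k$ of $\mathfrak{t}$; by construction $a_k\in A_k$ and $\de(a_k,a_j)=\de_{\min}(A_k,A_j)$ for all $j<k$, completing the induction. I expect the only delicate point to be the bookkeeping in the $c,a_j$ inequalities, where the separation of within-class distances ($\le r$) from between-class distances ($>2r$) is exactly what makes the anchor value $r$ simultaneously compatible with every prescribed distance.
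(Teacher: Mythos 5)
Your proposal is correct and is essentially the paper's own proof: the same induction, the same anchor point $c\in A_k$ with $\mathfrak{t}(c)=r$ and $\mathfrak{t}(a_j)=\de_{\min}(A_k,A_j)$, and the same verification via Lemma \ref{lem:trdist} for the pairs $a_i,a_j$ and Lemma \ref{lem:pairdist} Item (3) for the pairs $c,a_j$. No substantive differences.
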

\begin{proof}
By induction on $n$. Let $(A_i; i\in n\in \omega)$ be pairwise different $\stackrel{r}{\sim}$-equivalence classes  and  $(a_i; i\in n)$ points with $a_i\in A_i$ and $\de(a_i,a_j)=\de_{\min}(A_i,A_j)$ for all $i,j\in n$. Let $A$ be an $\stackrel{r}{\sim}$-equivalence class different from the $A_i$. Let $b\in A$ and $\mathfrak{p}$ the type function with $\dom(\mathfrak{p})=\{a_i : i\in n\}\cup \{b\}$ and $\mathfrak{p}(a_i)=\de_{\min}(A_i,A)$ and $\mathfrak{p}(b)=r$. Using inequalities~(1) of Section 2 it follows from Lemma \ref{lem:trdist} that it suffices to check the inequalities $|\mathfrak{p}(a_i)-r|\leq \de(a_i,a)\leq \mathfrak{p}(a_i)+r$, in order to verify that $\mathfrak{p}$ is a \Kat function.  That is the inequalities $\de_{\min}(A_i,A)-r\leq \de(a_i,a)\leq \de_{\min}(A_i,A)+r$ which follow from Lemma \ref{lem:pairdist} Item (3).

\end{proof}

\section{Distance beteween orbits}

Let $\mathcal{D} $ be a universal set of numbers  and $\mathcal{B}$ a block of $\mathcal{D}$ with $\min\mathcal{B}=\mathbf{m}$.
Let $\mathrm{M}=(M, \de)\in \mathfrak{U}_{\mathcal{D}}$.

\begin{defin}\label{defin:Katdist}
Let   $\mathfrak{s}$ and $\mathfrak{t}$ be two \Kat functions of\/ $\mathrm{M}$.  Then 
\begin{align*}
&\de(\mathfrak{s},\mathfrak{t})=\{m\in \mathcal{D} : \exists x\in \orb(\mathfrak{s})\, \exists y\in \orb(\mathfrak{t}) \big(\de(x,y)=m)\big)\},\\
&\de_{\mathrm{min}}(\mathfrak{s},\mathfrak{t})=\min(\de(\mathfrak{s}, \mathfrak{t})),\\
&\de_{\mathrm{max}}(\mathfrak{s},\mathfrak{t})=\max(\de(\mathfrak{s}, \mathfrak{t})).
\end{align*} 
\end{defin}

\begin{lem}\label{lem:range}
Let  $A$ be a finite subset of $M$ and $\mathfrak{s}$ and $\mathfrak{t}$ two \Kat functions with $\dom(\mathfrak{s})=\dom(\mathfrak{t})=A$. Then 
\begin{align*}
&\de(\mathfrak{s}, \mathfrak{t})=\\
&\big\{m\in \mathcal{D} : \max\{|\mathfrak{s}(x)-\mathfrak{t}(x)| : x\in A\}\leq m\leq \min\{\mathfrak{s}(x)+\mathfrak{t}(x) : x\in A\}\big\}.
\end{align*}
\end{lem}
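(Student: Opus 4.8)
The plan is to establish the two inclusions separately, using in both directions the characterization of \Kat functions by Inequalities~(1) of Section~2 --- a type function $\mathfrak{u}$ of $\mathrm{M}$ is \Kat exactly when $|\mathfrak{u}(p)-\mathfrak{u}(q)|\leq \de(p,q)\leq \mathfrak{u}(p)+\mathfrak{u}(q)$ for all $p,q\in\dom(\mathfrak{u})$ --- together with the realization property guaranteed by Theorem~\ref{thm:Fra1}. Throughout, write $\mathcal{I}$ for the right-hand side, i.e. the set of $m\in\mathcal{D}$ with $\max\{|\mathfrak{s}(x)-\mathfrak{t}(x)|:x\in A\}\leq m\leq \min\{\mathfrak{s}(x)+\mathfrak{t}(x):x\in A\}$.

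For the inclusion $\de(\mathfrak{s},\mathfrak{t})\subseteq \mathcal{I}$ I would take $m\in\de(\mathfrak{s},\mathfrak{t})$, witnessed by $x\in\orb(\mathfrak{s})$ and $y\in\orb(\mathfrak{t})$ with $\de(x,y)=m$. For each $a\in A$ the points $x,y,a$ all lie in $\mathrm{M}$, so the triangle $\{x,y,a\}$ is metric; since $\de(x,a)=\mathfrak{s}(a)$ and $\de(y,a)=\mathfrak{t}(a)$, the three triangle inequalities yield $|\mathfrak{s}(a)-\mathfrak{t}(a)|\leq m\leq \mathfrak{s}(a)+\mathfrak{t}(a)$. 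Taking the maximum of the lower bounds and the minimum of the upper bounds over $a\in A$ gives $m\in\mathcal{I}$.

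For the reverse inclusion $\mathcal{I}\subseteq \de(\mathfrak{s},\mathfrak{t})$, fix $m\in\mathcal{I}$. Since $\mathfrak{s}$ is \Kat, Theorem~\ref{thm:Fra1} makes $\orb(\mathfrak{s})$ nonempty, so choose $x\in\orb(\mathfrak{s})$; note $x\notin A$. Define a type function $\mathfrak{u}$ with $\dom(\mathfrak{u})=A\cup\{x\}$ by $\restrict{\mathfrak{u}}{A}=\mathfrak{t}$ and $\mathfrak{u}(x)=m$. The idea is that any realization of $\mathfrak{u}$ is forced to lie in $\orb(\mathfrak{t})$ and to sit at distance $m$ from $x$. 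I would then verify that $\mathfrak{u}$ is \Kat: because $A\cup\{x\}\subseteq M$, the only triangles of $\Sp(\mathfrak{u})$ that are not automatically metric are those of the form $\{\mathfrak{u},p,q\}$. Those with $p,q\in A$ are metric because $\mathfrak{t}$ is \Kat, and the triangle $\{\mathfrak{u},x,a\}$ for $a\in A$ requires $|m-\mathfrak{t}(a)|\leq \de(x,a)\leq m+\mathfrak{t}(a)$; using $\de(x,a)=\mathfrak{s}(a)$ this is exactly $|\mathfrak{s}(a)-\mathfrak{t}(a)|\leq m\leq \mathfrak{s}(a)+\mathfrak{t}(a)$, which holds for all $a\in A$ precisely because $m\in\mathcal{I}$.

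Finally, by Theorem~\ref{thm:Fra1} the \Kat function $\mathfrak{u}$ has a realization $y$. Since $y\in\orb(\mathfrak{u})$ we have $y\notin\dom(\mathfrak{u})\supseteq A$, and $\de(y,a)=\mathfrak{u}(a)=\mathfrak{t}(a)$ for every $a\in A$, so $y\in\orb(\mathfrak{t})$; moreover $\de(x,y)=\mathfrak{u}(x)=m$, whence $m\in\de(\mathfrak{s},\mathfrak{t})$. The single computational point I expect to require care is the equivalence between the triangle condition at $\{\mathfrak{u},x,a\}$ and the two bounds defining $\mathcal{I}$; it is routine but central, and it is also where the asymmetry of the construction is used --- $\mathfrak{s}$ is already realized by the fixed point $x$, while $\mathfrak{t}$ is realized afterwards by the point $y$ produced from $\mathfrak{u}$.
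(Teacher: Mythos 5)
Your proposal is correct and takes essentially the same route as the paper: both arguments hinge on fixing $v\in\orb(\mathfrak{s})$, extending $\mathfrak{t}$ to $A\cup\{v\}$ with value $m$ at $v$, and observing that the only triangles to check are $\{x,v,\cdot\}$ for $x\in A$, whose metric condition is exactly the stated interval condition since $\de(v,x)=\mathfrak{s}(x)$. Your two-inclusion write-up (with the direct triangle-inequality argument on a witnessing pair for the containment $\de(\mathfrak{s},\mathfrak{t})\subseteq\mathcal{I}$) merely spells out what the paper compresses into its ``if and only if.''
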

\begin{proof}
Let $v\in \orb(\mathfrak{s})$.  The type function   $\mathfrak{t}'$   with $\dom(\mathfrak{t}')=A\cup \{v\}$ and $\mathfrak{t}\subseteq \mathfrak{t'}$ and $\mathfrak{t}'(v)=m$ is a \Kat function if and only if $m\in \de(\mathfrak{s}, \mathfrak{t})$. Hence, in order to check that $\mathfrak{t}'$ is a \Kat function, we have to verify that every triangle $x,v,\mathfrak{t}'$ with $x\in A$ is metric, that is if $|\de(x,\mathfrak{t}')-\de(x,v)|\leq m \leq \de(x,\mathfrak{t}')+\de(x,v)$.

\end{proof}

\begin{cor}\label{cor:range}
Let  $A$ be a finite subset of $M$ and $\mathfrak{s}$ and $\mathfrak{t}$ two \Kat functions with $\dom(\mathfrak{s})=\dom(\mathfrak{t})=A$.Then:
\begin{enumerate}
\item $\de_{\mathrm{min}}(\mathfrak{s}, \mathfrak{t})= \min\big\{n\in \mathcal{D}: n\geq \max\{|\mathfrak{s}(x)-\mathfrak{t}(x)|: x\in A\}\big\}$. 

\item $\de_{\mathrm{max}}(\mathfrak{s},\mathfrak{t})= \max\big\{n\in \mathcal{D}: n\leq\min\{\mathfrak{s}(x)+\mathfrak{t}(x) : x\in A\}\big\}$. 

\item Let $r\in \mathcal{B}$  with $\mathbf{m}\leq r^{\langle -\rangle}<r$ and $\rank(\mathfrak{s})\geq r^{\langle -\rangle}$ and $\rank(\mathfrak{t})\geq \mathbf{m}$. Then $\de_{\max}(\mathfrak{s},\mathfrak{t})\geq r$.

\item $\de_{\max}(\mathfrak{s},\mathfrak{s})=\max\big\{n\in \mathcal{D}: n\leq 2\cdot \rank(\mathfrak{s})\big\}$. (Hence $\de_{\max}(\mathfrak{s}, \mathfrak{s})=\de_{\max}(\mathfrak{t}, \mathfrak{t})$ iff $\rank(\mathfrak{s})=\rank(\mathfrak{t})$.)

\item $\de(\mathfrak{s}, \mathfrak{s})=\big\{n\in \mathcal{D}: 0\leq n\leq 2\cdot \rank(\mathfrak{s})\big\}$.
\end{enumerate}

\end{cor}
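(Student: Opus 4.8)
The plan is to derive all five items from Lemma \ref{lem:range}, which already describes $\de(\mathfrak{s},\mathfrak{t})$ as the set of all $m\in\mathcal{D}$ with $\max\{|\mathfrak{s}(x)-\mathfrak{t}(x)|:x\in A\}\le m\le\min\{\mathfrak{s}(x)+\mathfrak{t}(x):x\in A\}$; that is, $\de(\mathfrak{s},\mathfrak{t})$ is an ``interval'' of $\mathcal{D}$. For Items~(1) and~(2) I would simply take the minimum and the maximum of this set. The extremes are well defined because $\de(\mathfrak{s},\mathfrak{t})\neq\emptyset$ (both orbits are nonempty by Theorem \ref{thm:orbits}, so some realized distance witnesses membership), and one checks that the least admissible $n\in\mathcal{D}$ is the least $n$ meeting the lower bound and the greatest admissible $n$ is the greatest $n$ meeting the upper bound. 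The only thing to verify is that these two extremal values actually lie in the interval, i.e.\ that the witness for the minimum still respects the upper bound and vice versa; this is automatic once the set is known to be a nonempty interval.

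For Items~(5) and~(4) I would specialize to $\mathfrak{t}=\mathfrak{s}$. Then $\max\{|\mathfrak{s}(x)-\mathfrak{s}(x)|:x\in A\}=0$ and $\min\{\mathfrak{s}(x)+\mathfrak{s}(x):x\in A\}=2\cdot\rank(\mathfrak{s})$, so Lemma \ref{lem:range} gives Item~(5) verbatim, and Item~(4) is then the maximum of that set, i.e.\ the case $\mathfrak{t}=\mathfrak{s}$ of Item~(2). For the parenthetical equivalence, the forward direction (equal ranks give equal $\de_{\max}$) is immediate from the formula; for the converse I would use that $r\mapsto\max\{n\in\mathcal{D}:n\le 2r\}$ is nondecreasing, and strictly increasing as long as its value stays below $\max\mathcal{D}$, so that distinct ranks force distinct maxima in the relevant range.

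The substantive item is Item~(3), and this is where I expect the real work. I would start from Item~(2), which reduces the claim $\de_{\max}(\mathfrak{s},\mathfrak{t})\ge r$ to showing $r\le\min\{\mathfrak{s}(x)+\mathfrak{t}(x):x\in A\}$, i.e.\ $\mathfrak{s}(x)+\mathfrak{t}(x)\ge r$ for every $x\in A$. Since $\mathfrak{s}(x)\ge\rank(\mathfrak{s})\ge r^{\langle -\rangle}$ and $\mathfrak{t}(x)\ge\rank(\mathfrak{t})\ge\mathbf{m}$, it suffices to prove the purely arithmetic inequality $r^{\langle -\rangle}+\mathbf{m}\ge r$. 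The main obstacle — and the only genuinely new ingredient — is to connect this to the block structure. From $\mathbf{m}\le r^{\langle -\rangle}<r$ and $r\in\mathcal{B}$ I would first argue that $r^{\langle -\rangle}$ is itself a member of $\mathcal{B}$: here $r$ is not the least element of $\mathcal{B}$, so by the consecutiveness built into Definition \ref{defin:block}(3) its $\mathcal{D}$-predecessor is the preceding block element, whence $(r^{\langle -\rangle})^{\langle +\rangle}=r$. Applying Theorem \ref{thm:distset}(3) with $x=r^{\langle -\rangle}$ and $\min\mathcal{B}=\mathbf{m}$ then yields exactly $r^{\langle -\rangle}+\mathbf{m}\ge (r^{\langle -\rangle})^{\langle +\rangle}=r$, closing the bound. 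I expect the delicate points to be the membership $r^{\langle -\rangle}\in\mathcal{B}$ and the correct alignment of the predecessor/successor operations with the block enumeration.
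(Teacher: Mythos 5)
Your proposal is correct and follows essentially the same route as the paper, which in fact only writes out Item~(3) and does so exactly as you do: reduce via Item~(2) to $\mathfrak{s}(x)+\mathfrak{t}(x)\geq r^{\langle -\rangle}+\mathbf{m}\geq r$, with the last inequality coming from Theorem~\ref{thm:distset} (your observation that $r^{\langle -\rangle}\in\mathcal{B}$ and $(r^{\langle -\rangle})^{\langle +\rangle}=r$ is precisely the detail the paper leaves implicit). The one soft spot is your justification of the converse in the parenthetical of Item~(4): the map $r\mapsto\max\{n\in\mathcal{D}:n\leq 2r\}$ need not be strictly increasing even while its value is below $\max\mathcal{D}$ (in $\mathcal{D}=\{0,1,2,3\}$ it takes the value $3$ at both $r=2$ and $r=3$), so that direction can fail as stated --- but this is a defect of the paper's own parenthetical remark, for which the paper supplies no proof, and it does not affect the substantive items.
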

\begin{proof}
Item 3.:  For $\rank(\mathfrak{s})\geq r$ Item 3. follows because  Item 2. implies $\de_{\max}(\mathfrak{s},\mathfrak{t})\geq \rank(\mathfrak{s})$. Let $\rank(\mathfrak{s})=r^{\langle -\rangle}$. Then for all $x\in A$:   $\mathfrak{s}(x)+\mathfrak{t}(x)\geq \rank(\mathfrak{s})+\mathbf{m}\geq r^{\langle -\rangle}+\mathbf{m}\geq r$. ($ r^{\langle -\rangle}+\mathbf{m}\geq r$ follows from Theorem~\ref{thm:distset}.) 
\end{proof}

\begin{lem}\label{lem:Katsp}
Let $A$ be a finite subset of $M$ and $s\in \omega$ and $\mathfrak{t}_i$ a \Kat function with $\dom(\mathfrak{t}_i)=A$  for every $i\in s$. (The \Kat functions $\mathfrak{t}_i$ need not be pairwise different.)  Let $\de''$  be a metric on the index set $s$   so that $\de''(i,j)\in \de(\mathfrak{t}_i,\mathfrak{t}_j)$ for all $i,j\in s$. Then the distance function $\de'$ on $A\cup s$ given by
\begin{enumerate}
\item $\de'(x,y)=\de(x,y)$ for all $x,y\in A$ and $\de'(i,j)=\de''(i,j)$ for all $i,j\in s$, 
\item $\de'(x,i)=\mathfrak{t}_i(x)$ for all $x\in A$ and $i\in s$,
\end{enumerate}
is  metric and every partial isometry $\beta$ of $(A\cup C,\de')$ for $C\subseteq s$  into $\mathrm{M}$  with $\beta(x)=x$ for all $x\in A$ has an extension to an isometry $\alpha$ of $(A\cup s;\de')$ into $\mathrm{M}$  with $\alpha(i)\in \orb(\mathfrak{t}_i)$ for all $i\in s$.  
\end{lem}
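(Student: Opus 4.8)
\section*{Proof proposal}

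The plan is to prove the two assertions in turn: first that $(A\cup s,\de')$ is a metric space, and then that $\beta$ extends as required. The second assertion I would derive almost immediately from the mapping extension property (the last assertion of Theorem~\ref{thm:Fra1}) once metricity is in hand, so the real work lies in verifying the triangle inequality for $\de'$.

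For metricity I would check each triangle of $(A\cup s,\de')$ according to how its vertices split between $A$ and $s$. Triangles with all three vertices in $A$ are metric because $A\subseteq M$, and triangles with all three vertices in $s$ are metric because $\de''$ is a metric on $s$. A triangle $\{x,y,i\}$ with $x,y\in A$ and $i\in s$ carries distances $\de(x,y)$, $\mathfrak{t}_i(x)$, $\mathfrak{t}_i(y)$, and its metricity is exactly Inequalities~(1) of Section~2 applied to the \Kat function $\mathfrak{t}_i$. The one case requiring the hypothesis on $\de''$ is a triangle $\{x,i,j\}$ with $x\in A$ and $i,j\in s$, whose distances are $\mathfrak{t}_i(x)$, $\mathfrak{t}_j(x)$, $\de''(i,j)$; here I would invoke Lemma~\ref{lem:range}. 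Since $\dom(\mathfrak{t}_i)=\dom(\mathfrak{t}_j)=A$ and $\de''(i,j)\in\de(\mathfrak{t}_i,\mathfrak{t}_j)$, that lemma gives $\max_{z\in A}|\mathfrak{t}_i(z)-\mathfrak{t}_j(z)|\le\de''(i,j)\le\min_{z\in A}(\mathfrak{t}_i(z)+\mathfrak{t}_j(z))$; specializing to $z=x$ yields $|\mathfrak{t}_i(x)-\mathfrak{t}_j(x)|\le\de''(i,j)\le\mathfrak{t}_i(x)+\mathfrak{t}_j(x)$, which is precisely the triangle inequality for $\{x,i,j\}$. This settles metricity. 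Together with the fact that every value of $\de'$ lies in $\mathcal{D}$ (distances inside $A$ lie in $\dist(\mathrm{M})\subseteq\mathcal{D}$, the values $\mathfrak{t}_i(x)$ lie in $\mathcal{D}$, and $\de''(i,j)\in\de(\mathfrak{t}_i,\mathfrak{t}_j)\subseteq\mathcal{D}$; positivity off the diagonal follows since each \Kat function $\mathfrak{t}_i$ of the Urysohn space $\mathrm{M}$ is realized off $A$, forcing $\mathfrak{t}_i(x)>0$), this shows $\mathrm{N}:=(A\cup s,\de')\in\mathfrak{M}_{\mathcal{D}}$.

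For the extension, $\mathrm{N}$ is finite, hence countable, and $\beta$ is an isometry of the finite subspace $(A\cup C,\de')$ of $\mathrm{N}$ into $\mathrm{M}$. The mapping extension property of Theorem~\ref{thm:Fra1} then extends $\beta$ to an isometry $\alpha$ of $\mathrm{N}$ into $\mathrm{M}$. It remains to verify $\alpha(i)\in\orb(\mathfrak{t}_i)$ for every $i\in s$, including those outside $C$. This is automatic: since $\alpha$ extends $\beta$ it fixes $A$ pointwise, so for each $x\in A$ we get $\de(\alpha(i),x)=\de(\alpha(i),\alpha(x))=\de'(i,x)=\mathfrak{t}_i(x)$, and $\alpha(i)\notin A$ because $\alpha$ is injective and agrees with the identity on the disjoint set $A$. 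By the definition of $\orb$ this says exactly that $\alpha(i)\in\orb(\mathfrak{t}_i)$.

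The only genuinely delicate point is recognizing that controlling the orbit of every $\alpha(i)$ costs nothing beyond fixing $A$: the condition $\alpha(i)\in\orb(\mathfrak{t}_i)$ is equivalent to $\de(\alpha(i),x)=\mathfrak{t}_i(x)$ for all $x\in A$, which is forced by $\alpha$ being a distance-preserving extension of $\restrict{\beta}{A}=\mathrm{id}_A$. If one prefers not to cite the mapping extension property as a black box, the same conclusion follows by extending $\beta$ one index of $s\setminus C$ at a time: at each stage the type function obtained by transporting the type of the new index across the isometry built so far is \Kat (by metricity of $\mathrm{N}$ and Inequalities~(1)), hence realized in $\mathrm{M}$ by Theorem~\ref{thm:Fra1}, and any such realization lands in the required orbit for the same reason as above.
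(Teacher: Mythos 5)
Your proposal is correct and follows essentially the same route as the paper: the only nontrivial triangles are those of the form $\{x,i,j\}$ with $x\in A$ and $i,j\in s$, which you handle via Lemma~\ref{lem:range} exactly as the paper does via the equivalent bounds $\de_{\min}(\mathfrak{t}_i,\mathfrak{t}_j)\leq\de''(i,j)\leq\de_{\max}(\mathfrak{t}_i,\mathfrak{t}_j)$, and the extension then comes from the mapping extension property with $\alpha(i)\in\orb(\mathfrak{t}_i)$ forced by $\de'(x,i)=\mathfrak{t}_i(x)$. Your added checks (values lying in $\mathcal{D}$, injectivity ensuring $\alpha(i)\notin A$) are details the paper leaves implicit.
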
 
\begin{proof}
Because $\de''$ is a metric on $s$ and $\de$ is a metric on $A$ and every $\mathfrak{t}_i$ is a \Kat function it remains  to check that the triangles of the form $x,i,j$ with $x\in A$ and $i,j\in s$ are metric in the distance function $\de'$.  Which indeed is the case because $|\de'(x,i)-\de'(x,j)|=|\mathfrak{t}_i(x)-\mathfrak{t}_j(x)|\leq \de_{\min}(\mathfrak{t}_i,\mathfrak{t}_j)\leq   \de''(i,j)=\de'(i,j)=\de''(i,j)\leq \de_{\max}(\mathfrak{t}_i,\mathfrak{t}_j)\leq \mathfrak{t}_i(x)+\mathfrak{t}_j(x)=\de'(x,i)+\de'(x,j)$.

It follows from the mapping extension property that every partial isometry $\beta$ of $A\cup C$ for $C\subseteq s$  into $\mathrm{M}$  with $\beta(x)=x$ for all $x\in A$ has an extension to an isometry $\alpha$ of $(A\cup s;\de')$ into $\mathrm{M}$. Item 2. implies that $\alpha(i)\in \orb(\mathfrak{t}_i)$.
\end{proof}

\begin{cor}\label{cor:Katsp} 
Given the conditions of Lemma \ref{lem:Katsp}: For every $k\in s$ and $v\in \orb(\mathfrak{t}_k)$  there exists a set of points $\{w_i: i\in s\}$  so that $w_k=v$ and  $w_i\in \orb(\mathfrak{t}_i) $ and $\de(w_i,w_j)=\de''(i,j)$ for all $i,j\in s$. 
\end{cor}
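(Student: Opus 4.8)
The plan is to derive this directly from Lemma \ref{lem:Katsp} by feeding it the right partial isometry, namely the one that prescribes the single point $v$. I would take $C=\{k\}$ and define $\beta$ on $A\cup\{k\}$ by $\beta(x)=x$ for every $x\in A$ and $\beta(k)=v$. Everything then reduces to checking that $\beta$ is a legitimate partial isometry of $(A\cup\{k\};\de')$ into $\mathrm{M}$, after which the Lemma does the rest.

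First I would verify that $\beta$ preserves distances. On pairs inside $A$ this is immediate, since $\de'$ agrees with $\de$ there by Item 1 of Lemma \ref{lem:Katsp} and $\beta$ fixes $A$ pointwise. For a pair $\{x,k\}$ with $x\in A$, recall from Item 2 of that Lemma that $\de'(x,k)=\mathfrak{t}_k(x)$; on the other hand $v\in\orb(\mathfrak{t}_k)$ means precisely that $\de(v,x)=\mathfrak{t}_k(x)$ for every $x\in\dom(\mathfrak{t}_k)=A$. Hence $\de(\beta(x),\beta(k))=\de(x,v)=\mathfrak{t}_k(x)=\de'(x,k)$, so $\beta$ is a partial isometry.

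Now I would invoke Lemma \ref{lem:Katsp} itself: since $\beta$ fixes $A$ pointwise, it extends to an isometry $\alpha$ of $(A\cup s;\de')$ into $\mathrm{M}$ with $\alpha(i)\in\orb(\mathfrak{t}_i)$ for all $i\in s$. Setting $w_i=\alpha(i)$ yields $w_k=\alpha(k)=\beta(k)=v$ and $w_i\in\orb(\mathfrak{t}_i)$, and because $\alpha$ is an isometry while $\de'(i,j)=\de''(i,j)$ by Item 1, we get $\de(w_i,w_j)=\de''(i,j)$ for all $i,j\in s$. This is exactly the claimed configuration.

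There is essentially no obstacle here beyond bookkeeping: the substantive content was already packaged into Lemma \ref{lem:Katsp}, and the only point that needs checking is that prescribing the one point $v$ is compatible with $\de'$, which is immediate from the definition of $\orb(\mathfrak{t}_k)$. The role of the corollary is merely to record that one realization among the $w_i$ may be pinned down in advance, the rest then being supplied coherently by the mapping extension property through the Lemma.
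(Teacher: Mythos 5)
Your proof is correct and is precisely the argument the paper intends: the corollary is stated there without proof as an immediate consequence of Lemma \ref{lem:Katsp}, obtained by taking $C=\{k\}$ and $\beta(k)=v$ exactly as you do. The one nontrivial check --- that $v\in\orb(\mathfrak{t}_k)$ makes $\beta$ a partial isometry of $(A\cup\{k\};\de')$ fixing $A$ pointwise --- is handled correctly.
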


\begin{lem}\label{lem:shrinkfin}
Let   $A,B, R$ be  finite disjoint subsets of $M$ and   $\mathfrak{T}$ a set of \Kat functions $\mathfrak{t}$ with $\dom(\mathfrak{t})=A$. For every $\mathfrak{t}\in \mathfrak{T}$ let $\mathfrak{t}'$ be a \Kat function with $\mathfrak{t}\subseteq \mathfrak{t}'$ and $\dom(\mathfrak{t}')=A\cup B$. Also: $\de(\mathfrak{t}, \mathfrak{s})=\de(\mathfrak{t}', \mathfrak{s}')$ for all $\mathfrak{t}, \mathfrak{s}\in \mathfrak{T}$. (Implying $\rank(\mathfrak{t})=\rank(\mathfrak{t}')$ for all $\mathfrak{t}\in \mathfrak{T}$.)

Then there exists an embedding $\alpha$ of $A\cup R$ into $\mathrm{M}$ so that $\alpha(x)=x$ for all $x\in A$ and $\alpha(y)\in \orb(\mathfrak{t'})$  for all $\mathfrak{t}\in \mathfrak{T}$ and all $y\in \orb(\mathfrak{t})\cap R$.
\end{lem}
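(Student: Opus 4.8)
The plan is to place those points of $R$ that actually lie in an orbit of some $\mathfrak{t}\in\mathfrak{T}$ first, using the prescribed system of \Kat functions $\mathfrak{t}'$ together with Lemma~\ref{lem:Katsp}, and then to carry the remaining points of $R$ along by means of the mapping extension property. Write $R_0=R\cap\bigcup_{\mathfrak{t}\in\mathfrak{T}}\orb(\mathfrak{t})$ and $R_1=R\setminus R_0$. Since two type functions with domain $A$ that share a realization take the same value at every point of $A$ and hence coincide, each $y\in R_0$ lies in $\orb(\mathfrak{t})$ for exactly one $\mathfrak{t}\in\mathfrak{T}$; denote this function by $\mathfrak{t}_y$ and its given extension by $\mathfrak{t}'_y$, so that $\mathfrak{t}_y\subseteq\mathfrak{t}'_y$ and $\dom(\mathfrak{t}'_y)=A\cup B$.

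First I would apply Lemma~\ref{lem:Katsp} with the finite set $A\cup B$ in the role of its domain, with index set $R_0$, with the \Kat function $\mathfrak{t}'_y$ attached to $y\in R_0$, and with $\de''(y,z)=\de(y,z)$ for $y,z\in R_0$. Since $\de''$ is the restriction of the metric of $\mathrm{M}$ it is a metric on $R_0$, so the one hypothesis to verify is that $\de(y,z)\in\de(\mathfrak{t}'_y,\mathfrak{t}'_z)$ for all $y,z\in R_0$. This is where the assumption $\de(\mathfrak{t},\mathfrak{s})=\de(\mathfrak{t}',\mathfrak{s}')$ enters: because $y\in\orb(\mathfrak{t}_y)$ and $z\in\orb(\mathfrak{t}_z)$, the number $\de(y,z)$ is realized between the two orbits, so $\de(y,z)\in\de(\mathfrak{t}_y,\mathfrak{t}_z)=\de(\mathfrak{t}'_y,\mathfrak{t}'_z)$. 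Lemma~\ref{lem:Katsp}, applied to the identity on $A\cup B$ (taking $C=\emptyset$), then yields an isometry fixing $A\cup B$ whose value $w_y$ at $y$ lies in $\orb(\mathfrak{t}'_y)$ and satisfies $\de(w_y,w_z)=\de(y,z)$; equivalently one may quote Corollary~\ref{cor:Katsp}. Define $\alpha_0$ on $A\cup R_0$ by $\alpha_0\restrict{}{A}=\mathrm{id}$ and $\alpha_0(y)=w_y$. It is an isometry: distances inside $A$ and among the $w_y$ are correct by construction, and for $a\in A$ we have $\de(w_y,a)=\mathfrak{t}'_y(a)=\mathfrak{t}_y(a)=\de(y,a)$, since $w_y\in\orb(\mathfrak{t}'_y)$ and $y\in\orb(\mathfrak{t}_y)$.

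Finally I would extend $\alpha_0$ over the remaining points $R_1$. Regard $\mathrm{N}=\restrict{\mathrm{M}}{(A\cup R)}$ as a finite member of $\mathfrak{M}_{\mathcal{D}}$; then $\alpha_0$ is an isometry of its finite subspace $\restrict{\mathrm{M}}{(A\cup R_0)}$ into $\mathrm{M}$. By the mapping extension property of Theorem~\ref{thm:Fra1} it extends to an isometry $\alpha$ of $\mathrm{N}$ into $\mathrm{M}$. This $\alpha$ is an embedding of $A\cup R$ with $\alpha\restrict{}{A}=\mathrm{id}$ and $\alpha(y)=w_y\in\orb(\mathfrak{t}'_y)$ for every $y\in R_0$, which is exactly the required conclusion, the points $y\in\orb(\mathfrak{t})\cap R$ being precisely the members of $R_0$.

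The step that needs the most care, and the reason for splitting $R$ into $R_0$ and $R_1$, is the treatment of the points of $R_1$. One cannot fold them into a single application of Lemma~\ref{lem:Katsp}: assigning a point $z\in R_1$ its own extended type over $A\cup B$ would force the verification of $\de(y,z)\in\de(\mathfrak{t}'_y,\mathfrak{t}_z)$ for $y\in R_0$, and the distance from $z$ to $B$ is not controlled by the hypotheses, so this inclusion can fail. Realizing only the controlled points $R_0$ first and then invoking the mapping extension property to reposition the $R_1$ points avoids prescribing their distances to $B$ at all; their distances to $A$ and to the $R_0$ points are automatically preserved because $\alpha$ is an isometry of all of $\restrict{\mathrm{M}}{(A\cup R)}$.
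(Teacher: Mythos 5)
Your proof is correct and takes essentially the same route as the paper's: isolate the points of $R$ lying in some orbit, realize them via Lemma~\ref{lem:Katsp} using $\de(y,z)\in\de(\mathfrak{t}_y,\mathfrak{t}_z)=\de(\mathfrak{t}'_y,\mathfrak{t}'_z)$, and carry the remaining points of $R$ along by the mapping extension property. The only cosmetic difference is that the paper extends the partial isometry over all of $A\cup B\cup R$ rather than restricting to $A\cup R_0$ before extending.
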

\begin{proof}
Let $S=\{x\in R: \exists \mathfrak{t}\in \mathfrak{T}\, \big(x\in \orb(\mathfrak{t})\big)\}$. Then $x\not\in \orb(\mathfrak{s})$,  if $x\in \orb(\mathfrak{t})$ and $\mathfrak{t}\not=\mathfrak{s}$. Let $(s_i; i\in n)$ be an enumeration of the elements of $S$. For every $i\in n$ let $\mathfrak{t}_i\in \mathfrak{T}$ be such that $s_i\in \orb(\mathfrak{t}_i)$. (Which implies that a $\mathfrak{t}\in \mathfrak{T}$ might be enumerated several times.) Let $\de''$ be the metric on $n$ with $\de''(i,j)=\de(s_i,s_j)$. Then $\de''(i,j)\in\de(\mathfrak{t_i}, \mathfrak{t_j})= \de(\mathfrak{t_i}', \mathfrak{t_j}')$   and hence it follows from Lemma  \ref{lem:Katsp} that there exists an embedding $\beta$ of $A\cup B\cup  n$ into $\mathrm{M}$ with $\beta(x)=x$ for all $x\in A\cup B$ and $\beta(i)\in \mathfrak{t}_i'$ for all $i\in n$. This in turn implies that there is an embedding $\gamma$ of $A\cup B\cup S $ into $\mathrm{M}$ with $\gamma(x)=x$ for all $x\in A\cup B$ and $\gamma(s_i)\in \mathfrak{t}_i'$ for all $i\in n$.

Let $\alpha$ be the extension of $\gamma$ to $A\cup B\cup R$.
\end{proof}

\section{The orbit  amalgamation theorem}

Let $\mathcal{D}$ be a universal set of numbers and $\mathrm{M}=(M, \de)\in\mathfrak{U}_{\mathcal{D}}$. Let $E=(v_k; k\in \omega)$ be an $\omega$-enumeration of  $M$. For $m\in \omega$ we denote by $E_m=(v_k;k\in m)$ the initial interval of $E$ of length $m$.

\begin{lem}\label{lem:reduce}
Let    $l\in m\in \omega$ and $s\in \omega$. For all  $i,j\in s$ let $\mathfrak{t}_i\subseteq \mathfrak{s}_i$   be  \Kat functions with $\dom(\mathfrak{t}_i)=E_l$ and $\dom(\mathfrak{s}_i)=E_m$ and   $\de(\mathfrak{t}_i,\mathfrak{t}_j)=\de(\mathfrak{s}_i, \mathfrak{s}_j)$. (Hence $\rank(\mathfrak{t}_i)=\rank(\mathfrak{s}_i)$).     

Then there exists an embedding $\alpha$ of  $\mathrm{M}$ into $\mathrm{M}$  with $\alpha(x)=x$ for all $x\in E_l$ and $\alpha(x)\in \orb(\mathfrak{s}_i)$ for all $x\in \orb(\mathfrak{t}_i)$.  
\end{lem}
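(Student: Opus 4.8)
The plan is to reduce the statement to a single application of the finite ``shrinking'' result, Lemma~\ref{lem:shrinkfin}, combined with a back-and-forth extension using the mapping extension property of Theorem~\ref{thm:Fra1}. First I would set up the finite data in the notation of Lemma~\ref{lem:shrinkfin}: put $A=E_l$, $B=E_m\setminus E_l$, and let $\mathfrak{T}=\{\mathfrak{t}_i : i\in s\}$, with each $\mathfrak{t}_i\subseteq \mathfrak{s}_i$ playing the role of $\mathfrak{t}\subseteq \mathfrak{t}'$. The hypotheses match exactly: $\dom(\mathfrak{t}_i)=A=E_l$, $\dom(\mathfrak{s}_i)=A\cup B=E_m$, and the crucial compatibility condition $\de(\mathfrak{t}_i,\mathfrak{t}_j)=\de(\mathfrak{s}_i,\mathfrak{s}_j)$ is precisely what is assumed. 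So for any finite $R$ disjoint from $E_m$, Lemma~\ref{lem:shrinkfin} yields an embedding $\alpha_R$ of $A\cup R$ into $\mathrm{M}$ fixing $A$ pointwise and sending each $y\in \orb(\mathfrak{t}_i)\cap R$ into $\orb(\mathfrak{s}_i)$.

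The difficulty is that Lemma~\ref{lem:shrinkfin} only handles \emph{finite} $R$, whereas here $\alpha$ must be defined on all of $M$, and moreover it must be a genuine embedding of the whole space $\mathrm{M}$, not merely a point-map respecting the orbit conditions. My approach is a back-and-forth construction along the enumeration $E=(v_k : k\in\omega)$. I would build an increasing chain of finite partial embeddings $\alpha_n$, each fixing $E_l$ pointwise, each respecting the orbit condition (if a domain point already realizes some $\mathfrak{t}_i$, its image realizes the corresponding $\mathfrak{s}_i$), and each defined on an initial segment of $E$ in its domain and having an initial segment of $E$ in its range, so that the union $\alpha=\bigcup_n\alpha_n$ is an isometry of $\mathrm{M}$ onto itself (surjectivity coming from the ``forth'' steps that absorb each $v_k$ into the range). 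Exhausting both domain and range in this way is what upgrades the finite, merely injective maps of Lemma~\ref{lem:shrinkfin} into an embedding of $\mathrm{M}$.

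At each stage, to extend $\alpha_n$ so as to include a new point $v_k$ in its domain while preserving the orbit constraint, I would apply Lemma~\ref{lem:shrinkfin} with $R$ taken to be the current finite set of domain points together with $v_k$ (and $A=E_l$, $B=E_m\setminus E_l$ as above); this produces a finite map with the right orbit-respecting behaviour, and then the mapping extension property of Theorem~\ref{thm:Fra1} guarantees the partial isometry extends within $\mathrm{M}$. The ``back'' direction, forcing a prescribed point into the range to achieve surjectivity, is handled symmetrically, noting that by Theorem~\ref{thm:orbits} each orbit $\orb(\mathfrak{s}_i)$ is itself isometric to a full Urysohn space $\boldsymbol{U}_{\mathcal{D}_{\mathfrak{s}_i}}$ and in particular infinite, so there is always room to place images without collision. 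The main obstacle I anticipate is bookkeeping: one must verify at every step that the compatibility $\de(\mathfrak{t}_i,\mathfrak{t}_j)=\de(\mathfrak{s}_i,\mathfrak{s}_j)$ continues to supply a consistent metric on the enlarged index set (this is exactly the triangle check internal to Lemma~\ref{lem:Katsp}, inherited through Lemma~\ref{lem:shrinkfin}), and that the orbit membership $\alpha(x)\in\orb(\mathfrak{s}_i)$ for \emph{every} $x\in\orb(\mathfrak{t}_i)$ — not just finitely many — survives the limit. The latter follows because the orbit condition is a local, finitary constraint on $\alpha(x)$ relative to the fixed finite set $E_m$, so it is preserved once $x$ enters the domain and is never disturbed thereafter.
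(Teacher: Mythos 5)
There is a genuine gap in the ``forth'' step. When you want to add a new point $v_k\in \orb(\mathfrak{t}_i)$ to the domain of the current partial map $\alpha_n$, its image $w$ must satisfy two kinds of constraints simultaneously: $\de(w,\alpha_n(x))=\de(v_k,x)$ for all $x$ in the current domain, and $\de(w,y)=\mathfrak{s}_i(y)$ for all $y\in E_m$. This amounts to realizing a \Kat function over $\alpha_n(\dom(\alpha_n))\cup E_m$, and for it to be metric you need, for every previously placed $x$ and every $y\in E_m\setminus E_l$, that $|\de(v_k,x)-\mathfrak{s}_i(y)|\leq \de(\alpha_n(x),y)\leq \de(v_k,x)+\mathfrak{s}_i(y)$. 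The quantity $\de(\alpha_n(x),y)$ is a distance from an \emph{image} point to the parameter set $E_m\setminus E_l$; it was fixed at an earlier stage and is not controlled by the hypotheses, so a greedy construction can paint itself into a corner. Neither of the tools you invoke closes this: Lemma \ref{lem:shrinkfin} applied to a larger finite set $R'$ produces a \emph{fresh} embedding of $E_l\cup R'$, not an extension of the map you already committed to on $R\subseteq R'$; and the mapping extension property of Theorem \ref{thm:Fra1} only extends $\alpha_n$ isometrically, with no guarantee that the new image lands in $\orb(\mathfrak{s}_i)$. You flag this as ``bookkeeping,'' but it is the actual content of the lemma. The paper's proof is organized precisely around this obstacle: it first discards $E_m\setminus E_l$ using Lemma \ref{lem:finiterem}, then considers \emph{all} finite approximations at each level, classifies them by their pattern of distances to $E_m\setminus E_l$ (finitely many patterns, since $\mathcal{D}$ is finite), extracts an infinite coherent branch by K\H{o}nig's Lemma, and uses homogeneity to transfer a witness from level $h+1$ down to extend a given map at level $h$ within the branch. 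Some such compactness or amalgamation argument is needed; a one-pass greedy recursion is not enough.

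A secondary error: your ``back'' steps cannot succeed. Since $\mathfrak{t}_i\subseteq\mathfrak{s}_i$ we have $\orb(\mathfrak{s}_i)\subseteq\orb(\mathfrak{t}_i)$, in general properly, and an isometry of $\mathrm{M}$ \emph{onto} $\mathrm{M}$ fixing $E_l$ pointwise must map $\orb(\mathfrak{t}_i)$ onto $\orb(\mathfrak{t}_i)$, so it cannot send $\orb(\mathfrak{t}_i)$ into a proper subset $\orb(\mathfrak{s}_i)$. The lemma only asks for an embedding into $\mathrm{M}$, and that is all one can get; drop the surjectivity.
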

\begin{proof}
Let $M'=(M\setminus E_m)\cup E_l$.  According to Lemma \ref{lem:finiterem} it suffices to prove that there exists an embedding $\alpha$ of  $\restrict{\mathrm{M}}{M'}$ into $\restrict{\mathrm{M}}{M'}$  with $\alpha(x)=x$ for all $x\in E_l$ and $\alpha(x)\in \orb(\mathfrak{s}_i)$ for all $x\in \orb(\mathfrak{t}_i)\cap M'$.

For $m\leq h\in \omega$ let  $\mathcal{A}_h$ be the set of isometries $\beta$ with $\beta(v_k)=v_k$ for all $k\in l$ and  $\dom(\beta)=(E_h\setminus E_m)\cup E_l$      and  $\beta(x)\in \orb(\mathfrak{s}_i)$ for all $x\in \orb(\mathfrak{t}_i)$ and $i\in s$. It follows from Lemma \ref{lem:shrinkfin} that $\mathcal{A}_j$ is not empty. Let $\mathcal{A}=\bigcup_{m\leq h\in \omega}\mathcal{A}_h$.  For two isometries $\beta$ and $\gamma$ in $\mathcal{A}$ let $\beta\preceq \gamma$ if $\beta\in \mathcal{A}_j$ and $\gamma\in \mathcal{A}_h$ with $j\leq h$ and if
$\de(y,\beta(x))=\de(y,\gamma(x))$ for all $y\in E_m\setminus E_l$ and all $x\in \dom(\beta)$. Let $\beta\sim \gamma$ if $\beta\preceq \gamma$ and $\gamma\preceq\beta$.  Then $\sim$ is an equivalence relation and because $\mathcal{D}$ is finite there are only finitely many $\sim$ equivalence classes in every $\mathcal{A}_h$. The quasiorder $\preceq$ on $\mathcal{A}$ factors into a partial order $\mathfrak{P}$ on the $\sim$ equivalence classes. The restriction of an isometry $\beta\in \mathcal{A}_{h+1}$ to $E_h$ is an isometry in $\mathcal{A}_h$.

According to   K\H{o}nig's Lemma there exists a chain $(\mathcal{C}_{m+j}; j\in \omega)$ in $\mathfrak{P}$. ($\mathcal{C}_m$ being the equivalence class containing the empty function.) The Theorem will follow if for every $h$ with  $m\leq h\in\omega$ and $\beta\in \mathcal{C}_h$ there is a $\gamma\in \mathcal{C}_{h+1}$ with $\beta\preceq  \gamma$. 

Let $\beta\in \mathcal{C}_h $ and $\delta\in \mathcal{C}_{h+1}$. 
 Let  $\kappa$ be the  isometry  with $\dom(\kappa)=\{\delta(v_k) : k\in h\}$ for which $\kappa(\delta(v_k))=\beta(v_k)$ for all $k\in h\setminus m$ and $\kappa(v_k)=v_k$ for all $i\in m$.
Let $\mu$ be an  extension of  $\kappa$ to an isometry of $\{\delta(v_k): k\in h+1\}$ into $\mathrm{M}$. That is the domain of $\mu$ contains the element $v_h$ in   addition to the elements in the domain of $\kappa$. Such an extension exists because $\mathrm{M}$ is homogeneous.   Then $\beta\preceq\gamma:=\beta\cup \{v_h, \mu(\delta(v_h))\}$. It remains to show that $\gamma\in \mathcal{C}_{h+1}$.

Let $i\in s$ and  $v_h\in \orb(\mathfrak{t}_i)$. Then $\delta(v_h)\in \orb(\mathfrak{s}_i)$ because $\delta\in \mathcal{C}_{h+1}\subseteq \mathcal{A}$ implying that $ \mu(\delta(v_h))\in \orb(\mathfrak{s}_i)$ because  $\mu(x)=x$ for all $x\in E_m=\dom(\mathfrak{s}_i)$. It follows that $\gamma\in \mathcal{C}_{h+1}$. 
\end{proof}

\begin{thm}\label{thm:reducegen}
Let  $A$ and $ B$ be   finite subsets of $M$ with $A\cap B=\emptyset$ and $s\in \omega$ and for every $i\in s$  let $\mathfrak{t}_i\subseteq \mathfrak{s}_i$ be  \Kat functions  with $\dom(\mathfrak{t}_i)=A$ and $\dom(\mathfrak{s}_i)=A \cup B$  and $\de(\mathfrak{t}_i, \mathfrak{t}_j)=\de(\mathfrak{s}_i, \mathfrak{s}_j)$ for all $i,j\in s$. 

Then there exists an embedding $\alpha$ of\/ $\mathrm{M}$ into $\mathrm{M}$  with $\alpha(x)=x$ for all $x\in A$ and $\alpha(x)\in \orb(\mathfrak{s}_i)$ for all $x\in \orb(\mathfrak{t}_i)$ and all $i\in s$; that is with $\orb_{\alpha(\mathrm{M})}(\mathfrak{t}_i)\subseteq \orb_{\mathrm{M}}(\mathfrak{s}_i)$.  
\end{thm}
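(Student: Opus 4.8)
The plan is to reduce the assertion directly to Lemma~\ref{lem:reduce}, which establishes exactly the same conclusion but under the restriction that $A$ and $A\cup B$ are nested \emph{initial segments} $E_l\subseteq E_m$ of the fixed enumeration $E$. The key observation is that the proof of Lemma~\ref{lem:reduce} uses nothing about $E$ beyond the fact that $E_l$ and $E_m$ are such nested initial segments, so I am free to work with an enumeration of $M$ tailored to the given sets $A$ and $B$.

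First I would dispose of the degenerate case $B=\emptyset$: then $\dom(\mathfrak{s}_i)=A=\dom(\mathfrak{t}_i)$ together with $\mathfrak{t}_i\subseteq\mathfrak{s}_i$ forces $\mathfrak{s}_i=\mathfrak{t}_i$, and the identity map already serves as $\alpha$. Assuming $B\neq\emptyset$, I set $l=|A|$ and $m=|A|+|B|$ and enumerate $M=(v_k;k\in\omega)$ so that $A$ is listed first and $B$ immediately afterwards, giving $E_l=A$ and $E_m=A\cup B$. The order within $A$ and within $B$ is irrelevant, as the domains of the $\mathfrak{t}_i$ and $\mathfrak{s}_i$ are sets. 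With this enumeration every hypothesis of Lemma~\ref{lem:reduce} holds verbatim: $l\in m\in\omega$, the \Kat functions satisfy $\mathfrak{t}_i\subseteq\mathfrak{s}_i$ with $\dom(\mathfrak{t}_i)=E_l$ and $\dom(\mathfrak{s}_i)=E_m$, and $\de(\mathfrak{t}_i,\mathfrak{t}_j)=\de(\mathfrak{s}_i,\mathfrak{s}_j)$ is the given assumption. Applying the lemma yields an embedding $\alpha$ of $\mathrm{M}$ into $\mathrm{M}$ fixing $E_l=A$ pointwise and mapping each $x\in\orb(\mathfrak{t}_i)$ into $\orb(\mathfrak{s}_i)$, which is precisely the first conclusion of the theorem.

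It then remains only to read off the reformulation $\orb_{\alpha(\mathrm{M})}(\mathfrak{t}_i)\subseteq\orb_{\mathrm{M}}(\mathfrak{s}_i)$. Since $\alpha$ is an isometry fixing $A=\dom(\mathfrak{t}_i)$, for any $z\in M$ and $x\in A$ we have $\de(\alpha(z),x)=\de(\alpha(z),\alpha(x))=\de(z,x)$; hence $\alpha(z)$ lies in the orbit of $\mathfrak{t}_i$ computed inside the copy $\alpha(\mathrm{M})$ exactly when $z\in\orb_{\mathrm{M}}(\mathfrak{t}_i)$, so $\orb_{\alpha(\mathrm{M})}(\mathfrak{t}_i)=\alpha[\orb_{\mathrm{M}}(\mathfrak{t}_i)]$. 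For such $z$ the conclusion of Lemma~\ref{lem:reduce} gives $\alpha(z)\in\orb_{\mathrm{M}}(\mathfrak{s}_i)$, and the desired inclusion follows. I do not anticipate a genuine obstacle, since the substantive work---the tree argument via K\H{o}nig's Lemma and the appeal to Lemma~\ref{lem:finiterem}---is already carried out inside Lemma~\ref{lem:reduce}; the only point deserving care is the legitimacy of replacing the globally fixed enumeration by one adapted to $A$ and $B$, which is harmless precisely because Lemma~\ref{lem:reduce} is insensitive to that choice.
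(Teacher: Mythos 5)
Your proposal is correct and follows exactly the paper's own route: the paper proves Theorem~\ref{thm:reducegen} by choosing an enumeration $E$ of $M$ with $E_l=A$ and $E_m=A\cup B$ and invoking Lemma~\ref{lem:reduce}. Your additional remarks (the degenerate case $B=\emptyset$ and the verification that $\orb_{\alpha(\mathrm{M})}(\mathfrak{t}_i)=\alpha[\orb_{\mathrm{M}}(\mathfrak{t}_i)]$) are sound and merely make explicit what the paper leaves implicit.
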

\begin{proof}
The Theorem follows from Lemma \ref{lem:reduce} for $E=(v_i; i\in \omega)$  an enumeration of $M$ with $E_l=A$ and $E_m=B$.
\end{proof}

\begin{cor}\label{cor:reduce}
Let   $\mathfrak{t}\subseteq \mathfrak{s}$ be two \Kat functions of\/ $\mathrm{M}$ with $\rank(\mathfrak{t})=\rank(\mathfrak{s})$.  

Then there exists an embedding $\alpha$ of\/ $\mathrm{M}$ into $\mathrm{M}$  with $\alpha(x)=x$ for all $x\in\dom(\mathfrak{t})$ and $\alpha(x)\in \orb(\mathfrak{s})$ for all $x\in \orb(\mathfrak{t})$, that is with $\orb_{\alpha(\mathrm{M})}(\mathfrak{t})\subseteq \orb_{\mathrm{M}}(\mathfrak{s})$.  
\end{cor}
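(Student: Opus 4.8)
Corollary 5.4 is a direct specialization of **Theorem 5.3** (thm:reducegen). Let me look at what needs to be proven.

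We have $\mathfrak{t} \subseteq \mathfrak{s}$ two Katětov functions with $\rank(\mathfrak{t}) = \rank(\mathfrak{s})$. We want an embedding $\alpha$ of $\mathrm{M}$ into $\mathrm{M}$ with $\alpha(x) = x$ for all $x \in \dom(\mathfrak{t})$ and $\alpha(x) \in \orb(\mathfrak{s})$ for all $x \in \orb(\mathfrak{t})$.

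Theorem 5.3 (thm:reducegen) with $s = 1$ (a single index), $A = \dom(\mathfrak{t})$, $B = \dom(\mathfrak{s}) \setminus \dom(\mathfrak{t})$, $\mathfrak{t}_0 = \mathfrak{t}$, $\mathfrak{s}_0 = \mathfrak{s}$.

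The hypotheses of Theorem 5.3 require:
- $A, B$ finite subsets with $A \cap B = \emptyset$ ✓ (since $\mathfrak{t} \subseteq \mathfrak{s}$, $\dom(\mathfrak{t}) \subseteq \dom(\mathfrak{s})$, so $B = \dom(\mathfrak{s}) \setminus \dom(\mathfrak{t})$)
- $\dom(\mathfrak{t}_0) = A$, $\dom(\mathfrak{s}_0) = A \cup B$ ✓
- $\de(\mathfrak{t}_i, \mathfrak{t}_j) = \de(\mathfrak{s}_i, \mathfrak{s}_j)$ for all $i, j$. With $s = 1$, this means $i = j = 0$, so we need $\de(\mathfrak{t}_0, \mathfrak{t}_0) = \de(\mathfrak{s}_0, \mathfrak{s}_0)$, i.e., $\de(\mathfrak{t}, \mathfrak{t}) = \de(\mathfrak{s}, \mathfrak{s})$.

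By Corollary 4.2 (cor:range) Item 5, $\de(\mathfrak{s}, \mathfrak{s}) = \{n \in \mathcal{D} : 0 \leq n \leq 2\cdot\rank(\mathfrak{s})\}$ depends only on $\rank(\mathfrak{s})$. Since $\rank(\mathfrak{t}) = \rank(\mathfrak{s})$, we have $\de(\mathfrak{t}, \mathfrak{t}) = \de(\mathfrak{s}, \mathfrak{s})$. ✓

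So the proof is simply applying Theorem 5.3.

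Let me write this out.
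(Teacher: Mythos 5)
Your proposal is correct and matches the paper's intended argument: the paper gives no separate proof of Corollary~\ref{cor:reduce}, treating it as the immediate special case $s=1$ of Theorem~\ref{thm:reducegen} with $A=\dom(\mathfrak{t})$ and $B=\dom(\mathfrak{s})\setminus\dom(\mathfrak{t})$. Your verification of the diagonal hypothesis $\de(\mathfrak{t},\mathfrak{t})=\de(\mathfrak{s},\mathfrak{s})$ via Corollary~\ref{cor:range} Item~5 is exactly the point the paper leaves implicit (it is the same equivalence noted parenthetically in the statement of Lemma~\ref{lem:reduce}).
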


\section{The first block of $\mathcal{D}$ and  orbit distances}

Let $\mathcal{D}$ be a universal set of numbers  and $\mathcal{B}$ the first block of $\mathcal{D}$, that is $1:=\min\mathcal{B}=\min(\mathcal{D}\setminus\{0\})$.   Let $\mathrm{M}=(M, \de)\in \mathfrak{U}_{\mathcal{D}}$.

\begin{lem}\label{lem:minmet}
Let $A$ be a finite subset of $M$ and $\mathfrak{p}_0$, $\mathfrak{p}_1$, $\mathfrak{p}_2$ three \Kat function with $\dom(\mathfrak{p}_i)=A$ for $i\in 3$. Then the trianlge $\mathfrak{p}_0, \mathfrak{p}_1, \mathfrak{p}_2$ with distance function $\de_{\mathrm{min}}$ is metric. 
\end{lem}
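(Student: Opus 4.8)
The plan is to establish the triangle inequality for $\de_{\mathrm{min}}$ by realizing one point in each of the three orbits inside $\mathrm{M}$ and then applying the ordinary triangle inequality of the metric $\de$ of $\mathrm{M}$. Writing $d_{ij}=\de_{\mathrm{min}}(\mathfrak{p}_i,\mathfrak{p}_j)$, it suffices by relabelling to prove $d_{02}\le d_{01}+d_{12}$; the remaining two inequalities follow by running the same argument with $\mathfrak{p}_2$, respectively $\mathfrak{p}_0$, playing the central role. Note first that each set $\de(\mathfrak{p}_i,\mathfrak{p}_j)$ is nonempty, since by Theorem~\ref{thm:Fra1} each orbit $\orb(\mathfrak{p}_i)$ is nonempty and any pair of points, one from each orbit, realizes a distance belonging to $\de(\mathfrak{p}_i,\mathfrak{p}_j)$; hence each $d_{ij}$ is a well-defined element of $\de(\mathfrak{p}_i,\mathfrak{p}_j)$.

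First I would fix, using Theorem~\ref{thm:Fra1}, a realization $w_1\in\orb(\mathfrak{p}_1)$. I then extend $\mathfrak{p}_0$ to the type function $\mathfrak{q}_0$ with $\dom(\mathfrak{q}_0)=A\cup\{w_1\}$, $\restrict{\mathfrak{q}_0}{A}=\mathfrak{p}_0$ and $\mathfrak{q}_0(w_1)=d_{01}$, and symmetrically extend $\mathfrak{p}_2$ to $\mathfrak{q}_2$ with $\dom(\mathfrak{q}_2)=A\cup\{w_1\}$, $\restrict{\mathfrak{q}_2}{A}=\mathfrak{p}_2$ and $\mathfrak{q}_2(w_1)=d_{12}$. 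Granting that $\mathfrak{q}_0$ and $\mathfrak{q}_2$ are \Kat functions, Theorem~\ref{thm:Fra1} realizes them, yielding $w_0\in\orb(\mathfrak{q}_0)\subseteq\orb(\mathfrak{p}_0)$ with $\de(w_0,w_1)=d_{01}$ and $w_2\in\orb(\mathfrak{q}_2)\subseteq\orb(\mathfrak{p}_2)$ with $\de(w_1,w_2)=d_{12}$. Because $w_0\in\orb(\mathfrak{p}_0)$ and $w_2\in\orb(\mathfrak{p}_2)$, the distance $\de(w_0,w_2)$ belongs to $\de(\mathfrak{p}_0,\mathfrak{p}_2)$, so $d_{02}\le\de(w_0,w_2)$; and the triangle inequality in $\mathrm{M}$ gives $\de(w_0,w_2)\le\de(w_0,w_1)+\de(w_1,w_2)=d_{01}+d_{12}$. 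Combining these two bounds yields $d_{02}\le d_{01}+d_{12}$, as required.

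The step I expect to be the crux, and the only place where the Urysohn property is genuinely used, is verifying that $\mathfrak{q}_0$ (and likewise $\mathfrak{q}_2$) is a \Kat function. Since $\mathfrak{p}_0$ is \Kat and $A\cup\{w_1\}$ is a subspace of $\mathrm{M}$, the only triangles of $\Sp(\mathfrak{q}_0)$ still to be checked are those of the form $\{\mathfrak{q}_0,x,w_1\}$ with $x\in A$. As $w_1\in\orb(\mathfrak{p}_1)$ we have $\de(x,w_1)=\mathfrak{p}_1(x)$, so by Inequalities~(1) of Section~2 this amounts to $|\mathfrak{p}_0(x)-d_{01}|\le\mathfrak{p}_1(x)\le\mathfrak{p}_0(x)+d_{01}$ for all $x\in A$. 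These follow from Corollary~\ref{cor:range}: Item~1 gives $d_{01}\ge\max\{|\mathfrak{p}_0(x)-\mathfrak{p}_1(x)|:x\in A\}$, which supplies both $\mathfrak{p}_1(x)\le\mathfrak{p}_0(x)+d_{01}$ and $\mathfrak{p}_0(x)-d_{01}\le\mathfrak{p}_1(x)$, while Item~2 together with $d_{01}=\de_{\mathrm{min}}(\mathfrak{p}_0,\mathfrak{p}_1)\le\de_{\mathrm{max}}(\mathfrak{p}_0,\mathfrak{p}_1)$ gives $d_{01}\le\mathfrak{p}_0(x)+\mathfrak{p}_1(x)$, hence the remaining bound $d_{01}-\mathfrak{p}_0(x)\le\mathfrak{p}_1(x)$. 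It is worth emphasizing that this argument uses only that every \Kat function of $\mathrm{M}$ is realized, and not the block decomposition of $\mathcal{D}$; a purely arithmetic estimate bounding $\de_{\mathrm{min}}$ directly in terms of the sets $\de(\mathfrak{p}_i,\mathfrak{p}_j)$ would not suffice, since the desired inequality can fail for distance sets that obey the block conditions but violate the $4$-values condition.
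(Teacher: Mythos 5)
Your proof is correct and takes essentially the same approach as the paper: fix a point in one orbit, realize the minimal distances from it to each of the other two orbits, and apply the triangle inequality of $\mathrm{M}$ to the resulting three points. The paper phrases this as a proof by contradiction with $\orb(\mathfrak{p}_0)$ supplying the apex and obtains the realizing points directly from Lemma~\ref{lem:range} rather than reverifying the \Kat conditions by hand, but these differences are cosmetic.
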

\begin{proof}
Assume for  a contradiction that 
\[
\de_{\mathrm{min}}(\mathfrak{p}_0, \mathfrak{p}_1)+\de_{\mathrm{min}}(\mathfrak{p}_0, \mathfrak{p}_2)< \de_{\mathrm{min}}(\mathfrak{p}_1, \mathfrak{p}_2).
\]
Let $w_0\in \orb(\mathfrak{p}_0)$. There exist points $w_1\in \orb(\mathfrak{p}_1)$ and $w_2\in \orb(\mathfrak{p}_2)$ with $\de(w_0, w_1)=\de_{\mathrm{min}}(\mathfrak{p}_0, \mathfrak{p}_1)$ and  $\de(w_0, w_2)=\de_{\mathrm{min}}(\mathfrak{p}_0, \mathfrak{p}_2)$. Then
\begin{align*}
&\de(w_0, w_1)+\de(w_0, w_2)\geq \de(w_1, w_2)\geq \de_{\mathrm{min}}(\mathfrak{p}_1, \mathfrak{p}_2)>\\
&\de_{\mathrm{min}}(\mathfrak{p}_0, \mathfrak{p}_1)+\de_{\mathrm{min}}(\mathfrak{p}_0, \mathfrak{p}_2)=\de(w_0, w_1)+\de(w_0, w_2).
\end{align*}
\end{proof}

\begin{defin}\label{defin:minmet}
Let $A$ be a finite subset of $M$ and $s\in \omega$ and for every $i\in s$ let $\mathfrak{p}_i$ be a \Kat function with $\dom(\mathfrak{p}_i)=A$. Let $r\in \mathcal{B}$ with $1\leq r^{\langle -\rangle}<r$.   Then,    the $\mathcal{D}$-graph with set of points $\{0, 1, 2, \dots, s-1\}=s$ and distance function $\de'$ so that for all $i,j\in s$ :  
\[
\de'(i,j)=
\begin{cases}
0                                                                                 &\text{if $i=j$;}\\
\de_{\min}(\mathfrak{p}_i, \mathfrak{p}_j),                 &\text{if $\de_{\min}(\mathfrak{p}_i, \mathfrak{p}_j)\geq r$;}\\
\in \{r^{\langle-\rangle}, r\},                                           &\text{otherwise,}
\end{cases}
\]
is  an {\em $r$-levelling distance function on the indices of $(\mathfrak{p}_i; i\in s)$.}

\end{defin}

\begin{lem}\label{lem:minmetex}
Let $A$ be a finite subset of $M$ and $s\in \omega$ and for every $i\in s$ let $\mathfrak{p}_i$ be a \Kat function with $\dom(\mathfrak{p}_i)=A$. Then the $\mathcal{D}$-graph with set of points $\{\mathfrak{p}_i: i\in s\}$ and distance function $\de_{\mathrm{min}}$ is  metric. 

Let $r\in \mathcal{B}$ with $1<r$. Then every    $r$-levelling distance function $\de'$ on the set $s$ of indices of $(\mathfrak{p}_i; i\in s)$ is a metric space 
 so that for $i\not=k\not=j$: 
\begin{align*}
&|\de'(k,i)-\de'(k,j)|\leq \de_{\min}(\mathfrak{p}_i, \mathfrak{p}_j) \text{\  if\/  $\mathfrak{p}_i\not=\mathfrak{p}_j$ and  }\\
&|\de'(k,i)-\de'(k,j)|\leq 1 \text{\  if\/ $\mathfrak{p}_i=\mathfrak{p}_j$.  }\notag
\end{align*} 
\end{lem}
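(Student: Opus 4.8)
The first assertion I would dispose of immediately: metricity of a $\mathcal{D}$-graph is just the requirement that every triple of its points forms a metric triangle, and this is exactly the content of Lemma~\ref{lem:minmet} for the distance $\de_{\min}$ on the $\mathfrak{p}_i$. The only thing to add is that $\{\mathfrak{p}_i:i\in s\}$ with $\de_{\min}$ is a genuine $\mathcal{D}$-graph, i.e. that $\de_{\min}(\mathfrak{p}_i,\mathfrak{p}_j)=0$ iff $\mathfrak{p}_i=\mathfrak{p}_j$; this follows from Corollary~\ref{cor:range}, whose fifth item gives $\de_{\min}(\mathfrak{p},\mathfrak{p})=0$ and whose first item forces $\de_{\min}(\mathfrak{p}_i,\mathfrak{p}_j)>0$ once $\max_{x}|\mathfrak{p}_i(x)-\mathfrak{p}_j(x)|>0$.

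For the second assertion, the plan is to first record the block arithmetic that makes levelling possible. Since $r\in\mathcal{B}$ and $\min\mathcal{B}=1$, certainly $r^{\langle-\rangle}\geq 1$, and applying item~(3) of Theorem~\ref{thm:distset} to $r^{\langle-\rangle}\in\mathcal{B}$ gives $r^{\langle-\rangle}+1\geq(r^{\langle-\rangle})^{\langle+\rangle}=r$, i.e. $r-r^{\langle-\rangle}\leq 1$. Combining these yields the single inequality I rely on, $r\leq 2r^{\langle-\rangle}$. Writing $d_{ij}:=\de_{\min}(\mathfrak{p}_i,\mathfrak{p}_j)$, the second preparatory step is the two-sided estimate, valid for $i\neq j$,
\[
\max(d_{ij},\,r^{\langle-\rangle})\ \leq\ \de'(i,j)\ \leq\ \max(d_{ij},\,r),
\]
which I read straight off Definition~\ref{defin:minmet}: when $d_{ij}\geq r$ we have $\de'(i,j)=d_{ij}$, and when $d_{ij}<r$ then $d_{ij}\leq r^{\langle-\rangle}$ while $\de'(i,j)\in\{r^{\langle-\rangle},r\}$.

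Granting the estimate, the triangle inequality $\de'(i,j)\leq\de'(i,k)+\de'(k,j)$ reduces, after bounding the left side above and the right side below, to showing $\max(d_{ij},r)\leq\max(d_{ik},r^{\langle-\rangle})+\max(d_{kj},r^{\langle-\rangle})$. If $d_{ij}\geq r$ this is precisely $d_{ij}\leq d_{ik}+d_{kj}$, the $\de_{\min}$-triangle inequality of Lemma~\ref{lem:minmet}; if $d_{ij}<r$ the right side is at least $2r^{\langle-\rangle}\geq r$, and the block arithmetic closes the case. The two displayed inequalities come from the same estimate together with $|d_{ik}-d_{kj}|\leq d_{ij}$ (again Lemma~\ref{lem:minmet}). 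For $\mathfrak{p}_i\neq\mathfrak{p}_j$ I bound $\de'(k,i)-\de'(k,j)$ by $\max(d_{ki},r)-\max(d_{kj},r^{\langle-\rangle})$ and split on $d_{ki}\geq r$ or $d_{ki}<r$: the first branch gives $d_{ki}-d_{kj}\leq d_{ij}$, the second gives $r-r^{\langle-\rangle}\leq 1\leq d_{ij}$ (using $d_{ij}\geq 1$ for distinct functions), and exchanging $i$ and $j$ supplies the absolute value. For $\mathfrak{p}_i=\mathfrak{p}_j$ one has $d_{ki}=d_{kj}$, so the difference is $0$ if $d_{ki}\geq r$ and at most $r-r^{\langle-\rangle}\leq 1$ otherwise.

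The genuinely non-formal input, and hence where I expect the crux to lie, is the arithmetic fact $2r^{\langle-\rangle}\geq r$ (equivalently $r-r^{\langle-\rangle}\leq 1$ together with $r^{\langle-\rangle}\geq 1$): it is exactly what is needed to push the triangle inequality through in the regime $d_{ij}<r$, where two levelled short distances must still sum to at least $r$. Everything else is routine bookkeeping with the two-sided estimate and the $\de_{\min}$-triangle inequality.
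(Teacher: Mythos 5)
Your proof is correct and follows essentially the same route as the paper: the first assertion from Lemma~\ref{lem:minmet}, the bound $r-r^{\langle-\rangle}\leq 1$ from Theorem~\ref{thm:distset}, and a case split on whether the relevant $\de_{\min}$ values are $\geq r$ or $<r$. Your two-sided estimate $\max(d_{ij},r^{\langle-\rangle})\leq\de'(i,j)\leq\max(d_{ij},r)$ merely repackages the paper's case analysis, and usefully makes explicit the arithmetic fact $r\leq 2r^{\langle-\rangle}$ behind the step the paper dismisses as ``easily verified.''
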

\begin{proof}
It follows from Lemma \ref{lem:minmet} that the $\mathcal{D}$-graph with set of points $\{\mathfrak{p}_i: i\in s\}$ and distance function $\de_{\mathrm{min}}$ is a metric space.  

The $\mathcal{D}$-graph with set of points $s$ and distance function $\de'$ is a metric space because all  triangles  $i,j,k$ are metric in the distance function  $\de'$ as can be easily verified.

Let $\mathfrak{p}_i=\mathfrak{p}_j$. Then $\de_{\min}(\mathfrak{p}_k, \mathfrak{p}_i)=\de_{\min}(\mathfrak{p}_k, \mathfrak{p}_j)$. If $\de_{\min}(\mathfrak{p}_k,\mathfrak{p}_i)\geq r$ then $\de'(k,i)=\de'(k,j)$ and hence $|\de'(k,i)=\de'(k,j)|=0<1$.  If $\de_{\min}(\mathfrak{p}_k,\mathfrak{p}_i)< r$  then  $|\de'(k,i)-\de'(k,j)|\leq  |r-r^{\langle -\rangle}|\leq 1$ according to Theorem \ref{thm:distset}. 
   
Let $\mathfrak{p}_i\not=\mathfrak{p}_j$.  If  both $\de_{\min}(\mathfrak{p}_k, \mathfrak{p}_i)< r$ and $\de_{\min}(\mathfrak{p}_k, \mathfrak{p}_j)< r$ then $|\de'(k,i)-\de'(k,j)|\leq |r-r^{\langle -\rangle}|\leq 1\leq \de_{\min}(\mathfrak{p}_i, \mathfrak{p}_j)$.  If   $\de_{\min}(\mathfrak{p}_k, \mathfrak{p}_i)\geq r$ and $\de_{\min}(\mathfrak{p}_k, \mathfrak{p}_j)< r$  then $|\de'(k,i)-\de'(k,j)|\leq |\de_{\min}(\mathfrak{p}_k, \mathfrak{p}_i)-r^{\langle -\rangle}|\leq |\de_{\min}(\mathfrak{p}_k,\mathfrak{p}_i)-\de_{\min}(\mathfrak{p}_k,\mathfrak{p}_j)|\leq \de_{\min}(\mathfrak{p}_i, \mathfrak{p}_j) $; the last inequality following from Lemma \ref{lem:minmet}.

If  both $\de_{\min}(\mathfrak{p}_k, \mathfrak{p}_i)\geq  r$ and $\de_{\min}(\mathfrak{p}_k, \mathfrak{p}_j)\geq  r$ then      $|\de'(k,i)-\de'(k,j)|= |\de_{\min}(\mathfrak{p}_k,\mathfrak{p}_i)-\de_{\min}(\mathfrak{p}_k,\mathfrak{p}_j)|\leq \de_{\min}(\mathfrak{p}_i, \mathfrak{p}_j) $.             
\end{proof}

\begin{cor}\label{cor:minmetr}
Let $A$ be a finite subset of $M$ and $ s\in \omega$ and for every $i\in s+1$ let $\mathfrak{p}_i$ be a \Kat function with $\dom(\mathfrak{p}_i)=A$ and $\mathfrak{p}_i\not=\mathfrak{p}_j$ for $i\not= j$ and $i,j\in s$. Let $t\in s$ and $\mathfrak{p}_t=\mathfrak{p}_s$ and $v\in \orb(\mathfrak{p}_s)$.
Let $1\leq r^{\langle -\rangle}<r\in \mathcal{B}$ and   $\rank(\mathfrak{p}_i)\in \{r,r^{\langle - \rangle}\}$ for all $1\leq i\in s+1$.  

Then for every  $r$-levelling distance function $\de'$ on the indices of $(\mathfrak{p}_i;  i\in s+1)$  there exists a set $\{w_i\in \orb(\mathfrak{p}_i) : i\in s+1\}$ of points with $w_s=v$ and $\de(w_i,w_j)=\de'(i,j)$. Also: 
\[
|\de(w_k,w_i)-\de(w_k,w_j)|\leq \de_{\min}(\mathfrak{p}_i, \mathfrak{p}_j) \text{\  for  $i,j,k\in s$ with $i\not=j\not=k\not=i$.}
\]
\end{cor}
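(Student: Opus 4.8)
The plan is to obtain the points $w_i$ directly from Corollary~\ref{cor:Katsp}, applied with index set $s+1$, with the \Kat functions $\mathfrak{p}_i$ in the role of the $\mathfrak{t}_i$ and with the chosen $r$-levelling function $\de'$ in the role of $\de''$. For this I must supply the two hypotheses of Lemma~\ref{lem:Katsp}: that $\de'$ is a metric on $s+1$, and that $\de'(i,j)\in \de(\mathfrak{p}_i,\mathfrak{p}_j)$ for all $i,j\in s+1$. The first is exactly the metricity assertion of Lemma~\ref{lem:minmetex}, so the whole content of the argument is the membership $\de'(i,j)\in\de(\mathfrak{p}_i,\mathfrak{p}_j)$.

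First I would record that, by Lemma~\ref{lem:range}, each set $\de(\mathfrak{p}_i,\mathfrak{p}_j)$ is precisely $\mathcal{D}\cap[\de_{\min}(\mathfrak{p}_i,\mathfrak{p}_j),\de_{\max}(\mathfrak{p}_i,\mathfrak{p}_j)]$, so it suffices to bracket $\de'(i,j)$ between these two orbit distances. If $\de_{\min}(\mathfrak{p}_i,\mathfrak{p}_j)\geq r$ then $\de'(i,j)=\de_{\min}(\mathfrak{p}_i,\mathfrak{p}_j)$ lies in $\de(\mathfrak{p}_i,\mathfrak{p}_j)$ by definition and there is nothing to do. The substantive case is $\de_{\min}(\mathfrak{p}_i,\mathfrak{p}_j)<r$, where $\de'(i,j)\in\{r^{\langle -\rangle},r\}$; here I would show that \emph{both} $r^{\langle -\rangle}$ and $r$ lie in $\de(\mathfrak{p}_i,\mathfrak{p}_j)$. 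For the lower bracket, $\de_{\min}(\mathfrak{p}_i,\mathfrak{p}_j)\in\mathcal{D}$ together with $\de_{\min}(\mathfrak{p}_i,\mathfrak{p}_j)<r$ forces $\de_{\min}(\mathfrak{p}_i,\mathfrak{p}_j)\leq r^{\langle -\rangle}$. For the upper bracket I would invoke Corollary~\ref{cor:range}, Item~3, to get $\de_{\max}(\mathfrak{p}_i,\mathfrak{p}_j)\geq r$; since then $\de_{\min}(\mathfrak{p}_i,\mathfrak{p}_j)\leq r^{\langle -\rangle}<r\leq\de_{\max}(\mathfrak{p}_i,\mathfrak{p}_j)$, both values lie in $\mathcal{D}\cap[\de_{\min},\de_{\max}]=\de(\mathfrak{p}_i,\mathfrak{p}_j)$.

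The main obstacle is verifying the rank hypotheses of Corollary~\ref{cor:range}, Item~3 (one rank $\geq r^{\langle -\rangle}$, the other $\geq \mathbf{m}=1$) for every relevant pair, with the delicate cases being pairs meeting the exceptional index $0$ and the repeated pair $(s,t)$. For $i,j\geq 1$ both ranks lie in $\{r,r^{\langle -\rangle}\}$, hence are $\geq r^{\langle -\rangle}\geq 1$, so Item~3 applies at once. When one index is $0$ I would use that a realizable \Kat function has all values positive, whence $\rank(\mathfrak{p}_0)\geq 1=\mathbf{m}$, and pair $\mathfrak{p}_0$ with the other function, whose rank is $\geq r^{\langle -\rangle}$. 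For the pair $(s,t)$ one has $\mathfrak{p}_s=\mathfrak{p}_t$, so $\de_{\min}=0<r$ while $\rank(\mathfrak{p}_t)=\rank(\mathfrak{p}_s)\geq r^{\langle -\rangle}$; applying Item~3 with $\mathfrak{s}=\mathfrak{t}=\mathfrak{p}_t$, together with Item~5, which identifies $\de(\mathfrak{p}_t,\mathfrak{p}_t)=\{n\in\mathcal{D}:0\leq n\leq 2\rank(\mathfrak{p}_t)\}$, again yields $r^{\langle -\rangle},r\in\de(\mathfrak{p}_t,\mathfrak{p}_t)$. The only arithmetic input is $2r^{\langle -\rangle}\geq r$, which follows from Theorem~\ref{thm:distset}, Item~3, giving $r^{\langle -\rangle}+\mathbf{m}\geq r$ with $\mathbf{m}=1\leq r^{\langle -\rangle}$.

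With the membership established, Corollary~\ref{cor:Katsp}, applied with $k=s$ and the given $v\in\orb(\mathfrak{p}_s)$, produces the points $\{w_i\in\orb(\mathfrak{p}_i):i\in s+1\}$ with $w_s=v$ and $\de(w_i,w_j)=\de'(i,j)$ throughout. The final displayed inequality is then immediate: for $i,j,k\in s$ pairwise distinct the functions $\mathfrak{p}_i$ and $\mathfrak{p}_j$ are distinct, so $|\de(w_k,w_i)-\de(w_k,w_j)|=|\de'(k,i)-\de'(k,j)|\leq\de_{\min}(\mathfrak{p}_i,\mathfrak{p}_j)$ by the first inequality of Lemma~\ref{lem:minmetex}.
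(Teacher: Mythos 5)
Your proposal is correct and follows essentially the same route as the paper's own proof: verify $\de'(i,j)\in\de(\mathfrak{p}_i,\mathfrak{p}_j)$ by the same case split on whether $\de_{\min}(\mathfrak{p}_i,\mathfrak{p}_j)\geq r$, invoke Corollary~\ref{cor:range} Item~3 for $\de_{\max}(\mathfrak{p}_i,\mathfrak{p}_j)\geq r$ in the remaining case, and conclude via Lemma~\ref{lem:minmetex} and Corollary~\ref{cor:Katsp}. You merely spell out the rank checks for the index $0$ and the repeated pair $(s,t)$ that the paper leaves implicit.
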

\begin{proof}
If $\de_{\min}(\mathfrak{p}_i,\mathfrak{p}_j)\geq r$ then $\de'(i,j)=\de_{\min}(\mathfrak{p}_i,\mathfrak{p}_j)\in \de(\mathfrak{p}_i,\mathfrak{p}_j)$.   If $\de_{\min}(\mathfrak{p}_i,\mathfrak{p}_j)\\< r$, it follows from $\rank(\mathfrak{p}_i)\in \{r^{\langle -\rangle}, r\}$ or $\rank(\mathfrak{p}_j)\in \{r^{\langle -\rangle}, r\}$  according to  Corollary~\ref{cor:range} Item 3.  that $ \de_{\max}(\mathfrak{p}_i,\mathfrak{p}_j)\geq r$ and therefore  $\de'(i,j)\in \de(\mathfrak{p}_i, \mathfrak{p}_j)$.  Hence the Corollary     follows from Lemma \ref{lem:minmetex} and  Corollary \ref{cor:Katsp}. 
\end{proof}

\section{The central extension theorem}

Let $\mathcal{D}$ be a universal set of numbers  and $\mathcal{B}$ the first block of $\mathcal{D}$, that is $1:=\min\mathcal{B}=\min(\mathcal{D}\setminus\{0\})$.   Let $\mathrm{M}=(M, \de)\in \mathfrak{U}_{\mathcal{D}}$.

\begin{defin}\label{defin:extend}
Let $S\subseteq M$ and $1<r\in \mathcal{B}$. A \Kat function $\mathfrak{p}$ of\/ $\mathrm{M}$ with $\rank(\mathfrak{p})=r$  is {\em extendible into $S$} on $\mathrm{M}$  if for every copy $\mathrm{H}=(H;\de)$ of\/ $\mathrm{M}$ in $\mathrm{M}$ with $\dom(\mathfrak{p})\subseteq H$  and every \Kat function $\mathfrak{p}'$ with $\dom(\mathfrak{p}')\subseteq H$ and $\mathfrak{p}\subseteq \mathfrak{p}'$  and $\rank(\mathfrak{p}')=r$ there exists an embedding $\alpha$ of\/ $\mathrm{H}$ into $\mathrm{H}$ with $\alpha(x)=x$ for all $x\in \dom(\mathfrak{p}')$ and  a \Kat function $\mathfrak{g}$ with  $\dom(\mathfrak{g})\subseteq \alpha(H)$ and with $\mathfrak{p}'\subseteq \mathfrak{g}$ and $\rank(\mathfrak{g})=r^{\langle -\rangle}$ so that $\orb_{\alpha(\mathrm{H})}(\mathfrak{g})\subseteq S$. 
\end{defin}

Note that if a \Kat function $\mathfrak{p}$ with $\rank(\mathfrak{p})=r$ is extendible into $S$ on $\mathrm{M}$ and if $\mathrm{H}$ is a copy of\/ $\mathrm{M}$ in $\mathrm{M}$  and $\mathfrak{q}$ is a \Kat function of\/ $\mathrm{H}$ with $\mathfrak{p}\subseteq \mathfrak{q}$ and $\dom(\mathfrak{q})\subseteq H$  and $\rank(\mathfrak{q})=r$, then $\mathfrak{q}$ is extendible into $S$ on $\mathrm{H}$.

\begin{lem}\label{lem:germ}
Let $1\leq r^{\langle -\rangle}<r\in \mathcal{B}$ and let $A$ be a finite subset of $M$ and $S\subseteq M$  and  $ t\in s\in \omega$     and $\mathfrak{p}_i$   a  \Kat function  for every $i\in s$ so that:  
\begin{description}
\item[\textnormal{1}.]  $\dom(\mathfrak{p}_i)=A$  for all $i\in s$ and $\mathfrak{p}_i\not=\mathfrak{p}_j$ for $i\not=j$.  
\item[\textnormal{2}.]  $\rank(\mathfrak{p}_i)=r$ for  $1\leq i\in t+1$ and $\rank(\mathfrak{p}_i)=r^{\langle -\rangle}$ for  $t<i\in s$.
\item[\textnormal{3}.]  $\mathfrak{p}_i$ for $1\leq i\leq t$ is extendible into $S$ and if $t=0$ then $\rank(\mathfrak{p}_0)=r$  and $\mathfrak{p}_0$ is extendible into $S$.
\end{description}

Then there exists an embedding $\alpha$ of\/ $\mathrm{M}$ into $\mathrm{M}$ with $\alpha(x)=x$ for all $x\in A$ and  with image $\mathrm{H}=(H; \de)$  and a point $v\in H$ and for every $i\in s$ a \Kat function $\mathfrak{p}_i'$ so that:

\begin{enumerate}
\item $\dom(\mathfrak{p}_i')=A\cup \{v\}$ and $\mathfrak{p}_i\subseteq \mathfrak{p}_i'$ for all $i\in s$ and $\mathfrak{p}_i'\not=\mathfrak{p}_j'$ for $i\not=j$.
\item $\rank(\mathfrak{p}_i')=\rank(\mathfrak{p}_i)$ for all $1\leq i\in s$ with $i\not=t$ and $\rank(\mathfrak{p}_t')=r^{\langle -\rangle}$.
\item $\mathfrak{p}_i'$ for $1\leq i < t$ is extendible into $S$ and if\/   $\mathfrak{p}_0$ is extendible into $S$ and $t\geq 1$ then $\mathfrak{p}'_0$ is extendible into $S$.
\item $\orb_{\mathrm{H}}(\mathfrak{p}'_t)\subseteq S$.
\item $\de_{\min}(\mathfrak{p}_i', \mathfrak{p}_j')=\de_{\min}(\mathfrak{p}_i, \mathfrak{p}_j)$ for all $i,j\in s$ with $i\not=j$.
\end{enumerate} 
\end{lem}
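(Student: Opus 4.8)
The plan is to read the point $v$ and the lowered function $\mathfrak{p}'_t$ straight off the extendibility of $\mathfrak{p}_t$, and then to hang every remaining function on this same $v$ by an $r$-levelling based at the type of $v$ over $A$. No simultaneous realization is required: for the conclusion each $\mathfrak{p}'_i$ only has to be a \Kat function of $\mathrm{H}$ with domain inside $H$, so once the values $\mathfrak{p}'_i(v)$ are chosen correctly, realizability is free from Theorem~\ref{thm:Fra1}.

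First I would invoke extendibility. By hypothesis $\mathfrak{p}_t$ has rank $r$ and is extendible into $S$ (the clause for $1\leq i\leq t$ when $t\geq 1$, the special clause when $t=0$). Applying Definition~\ref{defin:extend} to the copy $\mathrm{M}$ and the trivial extension $\mathfrak{p}'=\mathfrak{p}_t$ produces an embedding $\alpha_0$ fixing $A$ and a \Kat function $\mathfrak{g}\supseteq\mathfrak{p}_t$ with $\rank(\mathfrak{g})=r^{\langle-\rangle}$ and $\orb_{\alpha_0(\mathrm{M})}(\mathfrak{g})\subseteq S$. Since $\mathfrak{g}(a)=\mathfrak{p}_t(a)\geq r$ for $a\in A$, the rank $r^{\langle-\rangle}$ is attained at some $v\in\dom(\mathfrak{g})\setminus A$; I put $\mathfrak{p}'_t=\restrict{\mathfrak{g}}{(A\cup\{v\})}$, so $\rank(\mathfrak{p}'_t)=r^{\langle-\rangle}=\rank(\mathfrak{g})$. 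As $\mathfrak{p}'_t\subseteq\mathfrak{g}$ with equal rank, Corollary~\ref{cor:reduce} gives an embedding $\beta$ fixing $A\cup\{v\}$ with $\orb_{\beta\alpha_0(\mathrm{M})}(\mathfrak{p}'_t)\subseteq\orb_{\alpha_0(\mathrm{M})}(\mathfrak{g})\subseteq S$; I take $\alpha=\beta\circ\alpha_0$ and $\mathrm{H}=\alpha(\mathrm{M})$, securing Item~4 and the $i=t$ part of Item~2. I then record that $v$ sits near $\orb(\mathfrak{p}_t)$: for any $z\in\orb(\mathfrak{g})$ one has $\de(z,a)=\mathfrak{p}_t(a)$ and $\de(z,v)=r^{\langle-\rangle}$, so $|\de(a,v)-\mathfrak{p}_t(a)|\leq r^{\langle-\rangle}$ for all $a$. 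Writing $\mathfrak{q}$ for the type $a\mapsto\de(a,v)$ of $v$, this forces $\rank(\mathfrak{q})\geq\mathbf{m}$ (indeed $\de(a,v)\geq 1$ since $v\notin A$).

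Now I adjoin $\mathfrak{q}$ as an extra index $\ast$ to the family $(\mathfrak{p}_i;i\in s)$, choose an $r$-levelling distance function $\de'$ of Definition~\ref{defin:minmet} on $\{0,\dots,s-1,\ast\}$, and set $\mathfrak{p}'_i(v):=\de'(i,\ast)$ for $i\neq t$, selecting the levelled value in $\{r^{\langle-\rangle},r\}$ to be $\geq\rank(\mathfrak{p}_i)$ (possible since $r\geq\rank(\mathfrak{p}_i)\geq r^{\langle-\rangle}$ for $1\leq i\neq t$; for a non-extendible index $0$ I simply take $\mathfrak{p}'_0(v)=\de_{\min}(\mathfrak{p}_0,\mathfrak{q})$). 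Each $\mathfrak{p}'_i$ is then a \Kat function exactly because $\mathfrak{p}'_i(v)\in\de(\mathfrak{p}_i,\mathfrak{q})$: the lower endpoint is automatic, and $\de'(i,\ast)\leq\de_{\max}(\mathfrak{p}_i,\mathfrak{q})$ holds since Corollary~\ref{cor:range}\,(3), applied with $\rank(\mathfrak{p}_i)\geq r^{\langle-\rangle}$ and $\rank(\mathfrak{q})\geq\mathbf{m}$, gives $\de_{\max}(\mathfrak{p}_i,\mathfrak{q})\geq r$. The choice $\mathfrak{p}'_i(v)\geq\rank(\mathfrak{p}_i)$ yields Item~2; Item~1 holds because $\mathfrak{p}'_i$ restricts to the pairwise distinct $\mathfrak{p}_i$ on $A$; and Item~5 follows from Corollary~\ref{cor:range}\,(1), since $\de_{\min}(\mathfrak{p}'_i,\mathfrak{p}'_j)=\de_{\min}(\mathfrak{p}_i,\mathfrak{p}_j)$ precisely when $|\mathfrak{p}'_i(v)-\mathfrak{p}'_j(v)|\leq\de_{\min}(\mathfrak{p}_i,\mathfrak{p}_j)$, which is the levelling inequality $|\de'(\ast,i)-\de'(\ast,j)|\leq\de_{\min}(\mathfrak{p}_i,\mathfrak{p}_j)$ of Lemma~\ref{lem:minmetex}. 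Finally Item~3 is immediate from the remark following Definition~\ref{defin:extend}: for $1\leq i<t$ (and for $\mathfrak{p}_0$ when it is extendible and $t\geq 1$) the rank-$r$ function $\mathfrak{p}_i$ is extendible and $\mathfrak{p}'_i\supseteq\mathfrak{p}_i$ is a rank-$r$ \Kat function of $\mathrm{H}$, hence extendible into $S$ on $\mathrm{H}$.

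The step I expect to be the genuine obstacle is the coordination carried out above: extendibility hands me a point $v$ whose distances to $A$ I cannot prescribe and can only confine to within $r^{\langle-\rangle}$ of $\mathfrak{p}_t$, and yet all of the functions must be routed through this one point while keeping each rank and every value $\de_{\min}(\mathfrak{p}_i,\mathfrak{p}_j)$. The device that resolves it is to treat the type $\mathfrak{q}$ of $v$ as a bona fide extra \Kat function and to level against it: the sup-norm bound $\max_a|\mathfrak{q}(a)-\mathfrak{p}_t(a)|\leq r^{\langle-\rangle}$ is exactly what keeps $\rank(\mathfrak{q})\geq\mathbf{m}$ so that Corollary~\ref{cor:range}\,(3) applies, and Lemma~\ref{lem:minmetex} is precisely calibrated to hold the minimal distances fixed. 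The only bookkeeping to watch is that $\mathfrak{g}$ need not be a one-point extension, which is absorbed by the appeal to Corollary~\ref{cor:reduce}, and that the non-extendible index $0$ (unconstrained in rank when $t\geq 1$) is harmless, since Item~2 imposes nothing on $\mathfrak{p}'_0$ in that case.
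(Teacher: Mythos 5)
Your overall architecture matches the paper's --- extendibility of $\mathfrak{p}_t$ produces $\mathfrak{g}$, Corollary~\ref{cor:reduce} pushes the orbit of the lowered function into $S$, and an $r$-levelling together with Corollary~\ref{cor:range}~(3) and Lemma~\ref{lem:minmetex} supplies the remaining values $\mathfrak{p}'_i(v)$ --- but your choice of $v$ opens a genuine gap at the index $i=0$. You take $v$ to be a point of $\dom(\mathfrak{g})\setminus A$ at which the rank $r^{\langle-\rangle}$ is attained, so the type $\mathfrak{q}$ of $v$ over $A$ is an arbitrary pre-existing type about which you only know $\max_a|\mathfrak{q}(a)-\mathfrak{p}_t(a)|\leq r^{\langle-\rangle}$ and $\rank(\mathfrak{q})\geq\mathbf{m}=1$. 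For $1\leq i$ this suffices, since Corollary~\ref{cor:range}~(3) applies with $\rank(\mathfrak{p}_i)\geq r^{\langle-\rangle}$. But when $t\geq 1$ the hypotheses place no constraint on $\rank(\mathfrak{p}_0)$, and this generality is essential: in Lemma~\ref{lem:germ3} the index $0$ carries the arbitrary anchor function $\mathfrak{k}$. Corollary~\ref{cor:range}~(3) needs one of the two ranks to be at least $r^{\langle-\rangle}$, and with your $v$ both $\rank(\mathfrak{p}_0)$ and $\rank(\mathfrak{q})$ can equal $1$: if $\mathfrak{p}_0(a)=\mathfrak{q}(a)=1$ at some $a\in A$ then $\de_{\max}(\mathfrak{p}_0,\mathfrak{q})\leq 2$, so for $r>2$ neither $r$ nor $r^{\langle-\rangle}$ lies in $\de(\mathfrak{p}_0,\mathfrak{q})$ and the levelled value would make $\mathfrak{p}'_0$ fail to be a \Kat function. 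Your fallback $\mathfrak{p}'_0(v)=\de_{\min}(\mathfrak{p}_0,\mathfrak{q})$ repairs this but destroys Item~5: with $\de_{\min}(\mathfrak{p}_0,\mathfrak{q})=\de_{\min}(\mathfrak{p}_j,\mathfrak{q})=1$, $\de_{\min}(\mathfrak{p}_0,\mathfrak{p}_j)=2$ and $\de'(j,\ast)=r=5$ one gets $|\mathfrak{p}'_0(v)-\mathfrak{p}'_j(v)|=4>2$, hence $\de_{\min}(\mathfrak{p}'_0,\mathfrak{p}'_j)\neq\de_{\min}(\mathfrak{p}_0,\mathfrak{p}_j)$. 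No value of $\mathfrak{p}'_0(v)$ can work for this $v$, since it would have to lie in $\de(\mathfrak{p}_0,\mathfrak{q})\subseteq[0,2]$ and simultaneously within $\de_{\min}(\mathfrak{p}_0,\mathfrak{p}_j)=2$ of $5$.

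The paper avoids this by taking $v$ to be a \emph{realization} of $\mathfrak{g}$ (hence of $\mathfrak{p}_t$), so that the type of $v$ over $A$ is exactly $\mathfrak{p}_t$ and has rank $r$; it then extends $\mathfrak{g}$ to $\mathfrak{g}''$ with $\mathfrak{g}''(v)=r^{\langle-\rangle}$ (a \Kat function by Theorem~\ref{thm:orbits}) and applies Corollary~\ref{cor:reduce} to the pair $\mathfrak{g}'\subseteq\mathfrak{g}''$ to secure $\orb_{\mathrm{H}}(\mathfrak{p}'_t)\subseteq S$. With that anchor, Corollary~\ref{cor:range}~(3) applies to every pair $(\mathfrak{p}_i,\mathfrak{p}_t)$ including $i=0$, because $\rank(\mathfrak{p}_t)=r$ plays the role of the high-rank function; this is exactly the hypothesis pattern of Corollary~\ref{cor:minmetr}, which the paper then invokes (with $\mathfrak{p}_s=\mathfrak{p}_t$, $w_s=v$, $\de'(s,t)=r^{\langle-\rangle}$) to realize all the $w_i$ simultaneously and read off each $\mathfrak{p}'_i$ as the type of an actual point. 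If you replace your $v$ by this one, the remainder of your argument goes through.
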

\begin{proof}
Because $\mathfrak{p}_t$ is extendible into  $S$ there exists an embedding $\alpha'$ of $\mathrm{M}$ into $\mathrm{M}$ with $\alpha'(x)=x$ for all $x\in A$ with image $\mathrm{M}'=(M', \de)$  and a \Kat function $\mathfrak{g}$ with $\mathfrak{p}_t\subseteq \mathfrak{g}$ and $\rank(\mathfrak{g})=r^{\langle -\rangle}$ and $\orb_{\mathrm{M}'}(\mathfrak{g})\subseteq S$.

Let $v\in \orb_{\mathrm{M}'}(\mathfrak{g})$ and $\mathfrak{g}''$ the \Kat function with $\dom(\mathfrak{g}'')=\dom(\mathfrak{g})\cup \{v\}$ and $\mathfrak{g}\subseteq \mathfrak{g}''$ and $\mathfrak{g}''(v)=r^{\langle -\rangle}$. It follows from Theorem \ref{thm:orbits} that $\mathfrak{g}''$ is a \Kat function.   Let $\mathfrak{g}'$ be the \Kat function with $\dom(\mathfrak{g}')=A\cup \{v\}$ and $\mathfrak{g}'\subseteq \mathfrak{g}''$. Then $\rank(\mathfrak{g}'')=\rank(\mathfrak{g}')=r^{\langle -\rangle}$.  It follows from Corollary~\ref{cor:reduce} with $\mathfrak{g}'$ for $\mathfrak{t}$ and with $\mathfrak{g}''$ for $\mathfrak{s}$ that there exists an isometry $\alpha''$ of\/ $\mathrm{M}'$ onto $\mathrm{M}'$ with $\alpha''(x)=x$ for all $x\in A\cup \{v\}$ and $\alpha''(x)\in \orb_{\mathrm{M}'}(\mathfrak{g}'')\subseteq \orb_{\mathrm{M}'}(\mathfrak{g})$ for all $x\in \orb_{\mathrm{M}'}(\mathfrak{g}')$. Let $\mathrm{H}=(H;\de)$ be the image of $\alpha''$ and $\alpha=\alpha''\circ\alpha'$. Note that $\orb_{\mathrm{H}}(\mathfrak{g}')\subseteq \orb_{\mathrm{M}'}(\mathfrak{g})\subseteq S$ and that $v\in H$.

Let $\mathfrak{p}_s$ be the \Kat function with $\mathfrak{p}_s=\mathfrak{p}_t$ and let $\de'$ be the $r$-levelling metric on $s+1$ given by:
\begin{description}
\item[\textnormal{1}.] $\de'(s,t)=r^{\langle -\rangle}$.
\item[\textnormal{2}.] $\de'(i,j)=\max\{r, \de_{\min}(\mathfrak{p_i}, \mathfrak{p}_j)\}$ for  $i,j\in s+1$ with $i\not=j$ and $(\{i,j\}\not=\{s,t\}$.
\end{description}

Corollary \ref{cor:minmetr} provides  a set of points  $\{w_i\in \orb(\mathfrak{p}_i) : i\in s+1\}$ 
 with $w_s=v$ and $\de(w_i,w_j)=\de'(i,j)$ for all $i,j\in s+1$.  Let $\mathfrak{p}_i'$ for $ i\in s$  be the type function with $\dom(\mathfrak{p}_i')=A\cup \{w_s\}$ and $\mathfrak{p}_i\subseteq \mathfrak{p}_i'$ and $w_i\in \orb(\mathfrak{p}_i)$.

In order  to see that $\mathfrak{p}_i'$ is a \Kat function we have to check the triangles of the form $x, w_s, \mathfrak{p}_i'$ with $x\in A$, which indeed are metric because they are isometric to the triangles $x, w_s, w_i$, which are substructures of $\mathrm{M}$ and hence metric.  The \Kat functions $\mathfrak{p}_i$ are extendible into $S$ for all $1\leq i\in s$ because of the hereditary nature of the notion of being extendible into $S$.

Also  $\mathfrak{p}_i'(w_s)\geq r\geq \rank(\mathfrak{p}_i)$ for all $ i\in s$ with $i\not=t$ and $\mathfrak{p}_i\subseteq \mathfrak{p}_i'$ implying that $\rank(\mathfrak{p}_i')=\rank(\mathfrak{p}_i)$ for all $1\leq i\in s$ with $i\not=t$ and if $\rank(\mathfrak{p}_0)=r$ and $t\not= 0$ then $\rank(\mathfrak{p}_0')=\rank(\mathfrak{p}_0)=r$. Furthermore $\rank(\mathfrak{p}_t')=r^{\langle -\rangle}$ because $\mathfrak{p}_t\subseteq \mathfrak{p}_t'$ and $\dom(\mathfrak{p}_t')=\dom(\mathfrak{p}_t)\cup \{w_s\}$ and $\rank(\mathfrak{p}_t)=r$ and $\mathfrak{p}_t'(w_s)=r^{\langle -\rangle}$. Because $\mathfrak{p}_t'=\mathfrak{g}'$ we have  $\orb_{\mathrm{H}}(\mathfrak{p}_t')\subseteq S$.   Then,  according to the definitions of $\de_{\min}$ and $\de_{\max}$ and according to Corollary \ref{cor:minmetr}:
\[
\de_{\min}(\mathfrak{p}_i',\mathfrak{p}_j')=\max\{\de_{\min}(\mathfrak{p}_i, \mathfrak{p}_j),|\de(w_s, w_i)-\de(w_s,w_j)|\}=\de_{\min}(\mathfrak{p}_i,\mathfrak{p}_j) 
\]

\end{proof}

\begin{cor}\label{cor:germ2}
Let $1\leq r^{\langle -\rangle}<r\in \mathcal{B}$ and let $A$ be a finite subset of $M$ and $S\subseteq M$  and  $ s\in \omega$     and $\mathfrak{p}_i$   a  \Kat function  for every $i\in s$ so that:  
\begin{description}
\item[\textnormal{1}.]  $\dom(\mathfrak{p}_i)=A$  for all $i\in s$ and $\mathfrak{p}_i\not=\mathfrak{p}_j$ for $i\not=j$.  
\item[\textnormal{2}.]  $\rank(\mathfrak{p}_i)=r$ for all  $1\leq i\in s$.
\item[\textnormal{3}.]  $\mathfrak{p}_i$  is extendible into $S$ for all $1\leq i\in s$.
\end{description}

Then there exists an embedding $\alpha$ of\/ $\mathrm{M}$ into $\mathrm{M}$ with $\alpha(x)=x$ for all $x\in A$ and  with image $\mathrm{H}=(H; \de)$  and a finite  set  $B\subseteq H$ with $A\cap B=\emptyset$ and for every $ i\in s$ a \Kat function $\mathfrak{p}_i'$ so that:

\begin{description}
\item[$1$.] $\dom(\mathfrak{p}_i')=A\cup B$ and $\mathfrak{p}_i\subseteq \mathfrak{p}_i'$ for all $i\in s$ and $\mathfrak{p}_i'\not=\mathfrak{p}_j'$ for $i\not=j$.
\item[$2$.] $\rank(\mathfrak{p}_i')=r^{\langle -\rangle}$ for all $1\leq i\in s$.
\item[$3$.] $\orb_{\mathrm{H}}(\mathfrak{p}'_i)\subseteq S$ for all $1\leq i\in s$.
\item[$4$.] $\de_{\min}(\mathfrak{p}_i', \mathfrak{p}_j')=\de_{\min}(\mathfrak{p}_i, \mathfrak{p}_j)$
for all $i,j\in s$ with $i\not= j$.
\item[$5$.] If $\rank(\mathfrak{p}_0)=r$ and $\mathfrak{p}_0$ is extendible into $S$ then $B\subseteq H$ with $A\cap B=\emptyset$ can be chosen such that in addition to Items  1. to 4. above, also:
    \begin{description}
    \item[$1'$.]   $\rank(\mathfrak{p}_0')=r-1$. 
    \item[$2'$.] $\orb_{\mathrm{H}}(\mathfrak{p}'_0)\subseteq S$.
    \end{description}

\end{description} 
\end{cor}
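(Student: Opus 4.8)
The plan is to derive Corollary~\ref{cor:germ2} by iterating Lemma~\ref{lem:germ}, lowering the rank of one function at a time. I would build a decreasing chain of copies $\mathrm{M}=\mathrm{H}_0\supseteq\mathrm{H}_1\supseteq\cdots$ of $\mathrm{M}$, together with embeddings $\alpha_k$ of $\mathrm{H}_{k-1}$ into $\mathrm{H}_{k-1}$ with image $\mathrm{H}_k$ fixing the current finite domain pointwise, and freshly added points $v_1,v_2,\dots$; writing $B_k=\{v_1,\dots,v_k\}$. Each $\mathrm{H}_k$, being an isometric image of $\mathrm{M}$, is again a countable Urysohn space in $\mathfrak{U}_{\mathcal{D}}$, so Lemma~\ref{lem:germ} may be reapplied inside $\mathrm{H}_k$ with the set $A\cup B_k$ in the role of the finite set $A$ of the lemma. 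The final $\alpha$ is the composite of the $\alpha_k$, the image $\mathrm{H}$ is the last $\mathrm{H}_k$, and $B$ is the set of all added points, which is disjoint from $A$ since each $v_k$ is chosen in an orbit and hence outside the then-current domain.

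I would run the boundary index $t$ downward through $t=s-1,s-2,\dots,1$, lowering at the $k$-th step the rank of the single function $\mathfrak{p}_t$ (with $t=s-k$) from $r$ to $r^{\langle -\rangle}$. The key bookkeeping observation is that the two standing hypotheses of Lemma~\ref{lem:germ} for the value $t$ — the rank pattern ($r$ on $1\le i\le t$, and $r^{\langle -\rangle}$ on $t<i<s$) and extendibility into $S$ on $1\le i\le t$ — are exactly what the conclusion of the previous application, with boundary $t+1$, delivers. Indeed, Item~2 of the lemma lowers only the index $t+1$ and leaves all other ranks unchanged, while Item~3 preserves extendibility on $1\le i<t+1$, i.e. on $1\le i\le t$; the hypotheses of the corollary furnish the base case $t=s-1$. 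Thus every application is legitimate, and after the application with $t=1$ each $\mathfrak{p}_i'$ with $1\le i<s$ has rank $r^{\langle -\rangle}$. The nested extensions $\mathfrak{p}_i\subseteq\mathfrak{p}_i^{(1)}\subseteq\cdots\subseteq\mathfrak{p}_i'$ and the distinctness supplied by Item~1 at each stage give Items~1 and~2 of the corollary.

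The step that I expect to need the most care is Item~3, that every already-lowered orbit stays inside $S$ under all later applications. When index $j$ is lowered, at boundary $t=j$, Item~4 of Lemma~\ref{lem:germ} places the orbit of the then-current extension of $\mathfrak{p}_j$ inside $S$. Every subsequent application both extends this type function to one with a strictly larger domain and replaces the ambient space by a sub-copy $\mathrm{H}_{k+1}\subseteq\mathrm{H}_k$. Since enlarging the domain of a type function can only shrink its orbit ($\mathfrak{f}\subseteq\mathfrak{g}$ forces $\orb(\mathfrak{g})\subseteq\orb(\mathfrak{f})$), and since passing to a subspace gives $\orb_{\mathrm{H}_{k+1}}(\mathfrak{f})=\orb_{\mathrm{H}_k}(\mathfrak{f})\cap H_{k+1}$, the containment in $S$ is preserved to the end. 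Item~4 of the corollary then follows by telescoping Item~5 of Lemma~\ref{lem:germ}, which keeps $\de_{\min}$ fixed at every step, so that $\de_{\min}(\mathfrak{p}_i',\mathfrak{p}_j')=\de_{\min}(\mathfrak{p}_i,\mathfrak{p}_j)$.

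Finally, for the supplementary Item~5 I would note that the $\mathfrak{p}_0$-clause of Item~3 of Lemma~\ref{lem:germ} carries the extendibility of $\mathfrak{p}_0$ through every application with $t\ge 1$. Hence, once the pass $t=s-1,\dots,1$ has reduced all positive indices, index~$0$ still has rank $r$ and is extendible into $S$ while every other rank equals $r^{\langle -\rangle}$; this is precisely the hypothesis configuration of Lemma~\ref{lem:germ} with $t=0$. One further application therefore lowers $\rank(\mathfrak{p}_0')$ to $r^{\langle -\rangle}$ and, by Item~4, places $\orb_{\mathrm{H}}(\mathfrak{p}_0')$ inside $S$, giving Items~$1'$ and~$2'$ and completing the construction.
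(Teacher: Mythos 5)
Your proposal is correct and is essentially the paper's own argument: the paper proves the corollary "by induction on $s-t$ from Lemma~\ref{lem:germ}", which is exactly your downward pass $t=s-1,\dots,1,0$, and your bookkeeping (monotonicity of orbits under domain extension and passage to subcopies, preservation of $\de_{\min}$, and the $\mathfrak{p}_0$-clause carrying extendibility to the final $t=0$ application) fills in the details the paper leaves implicit.
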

\begin{proof}
Follows by induction on $s-t$ from Lemma \ref{lem:germ}.
\end{proof}

Note: Let $A$ be a finite subset of $M$  and $S\subseteq M$  and  $\mathfrak{k}$ a \Kat function with $\dom(\mathfrak{k})=A$ and let $u\in \orb(\mathfrak{k})$.  Let $\mathfrak{p}$ be a \Kat function with $\dom(\mathfrak{p})=A$ and $\rank(\mathfrak{p})=r\in \mathcal{B}$. Then,  there exists a \Kat function $\mathfrak{q}$ with $\dom(\mathfrak{q})=A\cup \{u\}$ and $\mathfrak{p}\subseteq \mathfrak{q}$ and $\mathfrak{q}(u)=l\leq r^{\langle -\rangle}$ if and only if $l\in \de(\mathfrak{k},\mathfrak{p})$ and $l\leq r^{\langle -\rangle}$ if and only if   $\de_{\min}(\mathfrak{k},\mathfrak{p})\leq l\leq r^{\langle -\rangle}$ and $l\in \mathcal{B}$. ($\de_{\max}(\mathfrak{k},\mathfrak{p})\geq r^{\langle +\rangle}$ if $r<\max\mathcal{B}$ according to Corollary~\ref{cor:range} Item 3.  and if $r=\max\mathcal{B}$ then  $\de_{\max}(\mathfrak{k},\mathfrak{p})\geq r$ according to Corollary~\ref{cor:range} Item~2.)

\begin{lem}\label{lem:germ3}
Let $A$ be a finite subset of $M$  and $S\subseteq M$  and  $\mathfrak{k}$ a \Kat function with $\dom(\mathfrak{k})=A$. Let $v\in \orb(\mathfrak{k})$ and let $\mathfrak{P}$ be a set of \Kat functions $\mathfrak{p}$ which are extendible into $S$ and  with $\dom(\mathfrak{p})=A$ and $\rank(\mathfrak{p})=r\in \mathcal{B}$ for which there exists a \Kat function $\mathfrak{q}$ with $\dom(\mathfrak{q})=A\cup \{v\}$ and $\rank(\mathfrak{q})<r$, that is a set of \Kat functions $\mathfrak{p}$ with $\de_{\min}(\mathfrak{k}, \mathfrak{p})<r$.

Then there exists an embedding $\gamma$ of\/ $\mathrm{M}$ into $\mathrm{M}$ with $\gamma(x)=x$ for all $x\in A$ and a point $u\in \orb_{\gamma(\mathrm{M})}(\mathfrak{k})$ so that $\orb_{\gamma(\mathrm{M})}(\mathfrak{q})\subseteq S$ for every \Kat function $\mathfrak{q}$ with $\dom(\mathfrak{q})=A\cup \{u\}$ and $\mathfrak{q}(u)<r$ for which there is a \Kat function $\mathfrak{p}\in \mathfrak{P}$ with $\mathfrak{p}\subseteq \mathfrak{q}$ . If\/  $\mathfrak{k}\in \mathfrak{P}$ then $u\in S$.
\end{lem}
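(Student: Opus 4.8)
The plan is to reduce the statement to a finite list of orbit–containment requirements, to meet those requirements by the simultaneous one–step rank lowering of Corollary \ref{cor:germ2} together with the amalgamation of Corollary \ref{cor:Katsp}, and finally to globalise the resulting finite configuration to an embedding of all of $\mathrm{M}$ by a K\H{o}nig's–Lemma argument in the style of Lemma \ref{lem:reduce}.

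First I would record that $\mathfrak{P}$ is finite: since $\dom(\mathfrak{p})=A$ is finite and $\mathcal{D}$ is finite there are only finitely many \Kat functions with domain $A$. By the Note preceding the lemma, together with Corollary \ref{cor:range}, a \Kat function $\mathfrak{q}$ with $\dom(\mathfrak{q})=A\cup\{u\}$, $\mathfrak{q}(u)<r$ and $\mathfrak{q}\restrict{}{A}=\mathfrak{p}\in\mathfrak{P}$ exists exactly for the values $\de_{\min}(\mathfrak{k},\mathfrak{p})\le\mathfrak{q}(u)\le r^{\langle -\rangle}$; thus only finitely many abstract types $\mathfrak{q}_{(\mathfrak{p},l)}:=\mathfrak{p}\cup\{(u,l)\}$ must be controlled. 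The value $l=0$ can occur only when $\mathfrak{k}=\mathfrak{p}$ (so that $\de_{\min}(\mathfrak{k},\mathfrak{p})=0$ by Corollary \ref{cor:range}), and then $\orb(\mathfrak{q}_{(\mathfrak{k},0)})=\{u\}$; hence the clause $u\in S$ is just the instance $l=0$ and needs no separate treatment. So it suffices to build a copy $\gamma(\mathrm{M})$ fixing $A$ and a realization $u$ of $\mathfrak{k}$ in it with $\orb_{\gamma(\mathrm{M})}(\mathfrak{q}_{(\mathfrak{p},l)})\subseteq S$ for every admissible pair $(\mathfrak{p},l)$.

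Next I would apply Corollary \ref{cor:germ2} to the family $\mathfrak{P}$, using the distinguished function (Item 5) for $\mathfrak{k}$ in the case $\mathfrak{k}\in\mathfrak{P}$. This produces an embedding with image $\mathrm{H}$, a finite set $B$ disjoint from $A$, and for each $\mathfrak{p}\in\mathfrak{P}$ a \Kat extension $\mathfrak{p}'$ over $A\cup B$ of rank $r^{\langle -\rangle}$ with $\orb_{\mathrm{H}}(\mathfrak{p}')\subseteq S$ and with $\de_{\min}$ preserved; when $\mathfrak{k}\in\mathfrak{P}$, Item 5 additionally places a lowered copy of $\mathfrak{k}$ with orbit inside $S$, which is where $u$ will be taken so as to force $u\in S$. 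I would then install the single point $u$ as a realization of $\mathfrak{k}$ lying within distance $<r$ of the lowered orbits, choosing its distances to $B$ by the $r$–levelling recipe of Definition \ref{defin:minmet} and Corollary \ref{cor:minmetr}, so that all the triangle bounds of Lemma \ref{lem:minmetex} and the block inequalities of Theorem \ref{thm:distset} are respected and $\mathfrak{k}$ is realized as required; this simultaneous placement is exactly what Corollary \ref{cor:Katsp} furnishes.

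The heart of the matter—and the step I expect to be the main obstacle—is that the single point $u$ must act as the rank–lowering point for \emph{every} $\mathfrak{p}\in\mathfrak{P}$ and at \emph{every} admissible radius $l\le r^{\langle -\rangle}$ at once, whereas both Lemma \ref{lem:germ} and the extendibility hypothesis lower the rank only one step, from $r$ to $r^{\langle -\rangle}$, and through auxiliary fresh points rather than through a prescribed realization of $\mathfrak{k}$. Bridging this is the crux. I would attempt it by processing the admissible radii $l$ downward from $r^{\langle -\rangle}$: at the top radius the lowering of $\mathfrak{P}$ into $S$ is precisely Corollary \ref{cor:germ2}, and at each subsequent radius I would re-apply the levelling of Corollary \ref{cor:minmetr} and a further instance of Lemma \ref{lem:germ} around the \emph{same} point $u$, using the first–block inequalities of Theorem \ref{thm:distset} and the distance bounds of Corollary \ref{cor:range} to guarantee that each new sphere about $u$ can be forced into $S$ while the spheres already placed there are left undisturbed; the distinctness of orbits for distinct types, exploited as in Lemma \ref{lem:shrinkfin}, keeps the finitely many requirements from interfering, and the preservation of $\de_{\min}$ keeps every intermediate type \Kat. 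Once a finite amalgam realizing $u$ and meeting all the containments is in hand, I would promote it to an embedding $\gamma$ of all of $\mathrm{M}$ fixing $A$ exactly as in Lemma \ref{lem:reduce}: form the tree of partial isometries respecting the orbit constraints, observe that finiteness of $\mathcal{D}$ leaves only finitely many $\sim$–classes on each level, and extract an infinite branch by K\H{o}nig's Lemma; Lemma \ref{lem:finiterem} then absorbs the finitely many altered points, giving $u\in\orb_{\gamma(\mathrm{M})}(\mathfrak{k})$ with $\orb_{\gamma(\mathrm{M})}(\mathfrak{q}_{(\mathfrak{p},l)})\subseteq S$ for every admissible $(\mathfrak{p},l)$, as required.
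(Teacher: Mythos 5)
Your overall skeleton (reduce to finitely many pairs $(\mathfrak{p},l)$ with $\de_{\min}(\mathfrak{k},\mathfrak{p})\leq l\leq r^{\langle -\rangle}$, apply Corollary \ref{cor:germ2}, then globalise) matches the paper, but the step you yourself flag as the crux is not correctly resolved, and the fix you propose would fail. You suggest handling the radii $l<r^{\langle -\rangle}$ by iterating Lemma \ref{lem:germ} ``around the same point $u$'' to force each successive sphere about $u$ into $S$. But Lemma \ref{lem:germ} and the notion of extendibility only ever lower rank by one step, from $r$ to $r^{\langle -\rangle}$, and they apply only to \Kat functions of rank $r$ that are hypothesised to be extendible into $S$; after the first application the lowered functions $\mathfrak{p}_i'$ have rank $r^{\langle -\rangle}$ and carry no extendibility hypothesis, so there is no instance of Lemma \ref{lem:germ} left to ``re-apply'' at the smaller radii. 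The point you are missing is that nothing needs to be forced into $S$ at those radii.

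The paper's resolution is that the smaller radii come for free by orbit monotonicity. Having obtained from Corollary \ref{cor:germ2} the functions $\mathfrak{p}_i'$ on $A\cup B$ with $\orb_{\mathrm{H}}(\mathfrak{p}_i')\subseteq S$ and with $\de_{\min}$ preserved, one picks $u$ realizing $\mathfrak{p}_0'$ (inside $S$ when $\mathfrak{k}\in\mathfrak{P}$) and, for each admissible $l$, forms the \Kat extension $\mathfrak{s}_{i,l}$ of $\mathfrak{p}_i'$ to $A\cup B\cup\{u\}$ with $\mathfrak{s}_{i,l}(u)=l$; since $\mathfrak{p}_i'\subseteq\mathfrak{s}_{i,l}$ one has $\orb(\mathfrak{s}_{i,l})\subseteq\orb(\mathfrak{p}_i')\subseteq S$ automatically, for every $l$ simultaneously. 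The only remaining work is the verification that $\de(\mathfrak{t}_{i,l},\mathfrak{t}_{j,m})=\de(\mathfrak{s}_{i,l},\mathfrak{s}_{j,m})$ for the restrictions $\mathfrak{t}_{i,l}$ of $\mathfrak{s}_{i,l}$ to $A\cup\{u\}$ (carried out in the paper via Corollary \ref{cor:range} and Item 4 of Corollary \ref{cor:germ2}), after which a single application of Theorem \ref{thm:reducegen} to the whole family of pairs $\mathfrak{t}_{i,l}\subseteq\mathfrak{s}_{i,l}$ yields an embedding $\beta$ fixing $A\cup\{u\}$ with $\orb_{\beta(\mathrm{H})}(\mathfrak{t}_{i,l})\subseteq\orb_{\mathrm{H}}(\mathfrak{s}_{i,l})\subseteq S$. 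This also makes your concluding K\H{o}nig's-Lemma reconstruction redundant, since Theorem \ref{thm:reducegen} already packages that argument. Without the containment observation and the distance-set verification, your construction of the finite amalgam does not go through.
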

\begin{proof}
If $\mathfrak{k}\in \mathfrak{P}$ let $s=|\mathfrak{P}|$ and $(\mathfrak{p}_i; i\in s)$ an enumeration of $\mathfrak{P}$ with $\mathfrak{k}=\mathfrak{p}_0$. If $\mathfrak{k}\not\in \mathfrak{P}$ let $s=|\mathfrak{P}|+1$ and $(\mathfrak{p}_i; i\in s)$ an enumeration of $\mathfrak{P}\cup \{k\}$ with $\mathfrak{k}=\mathfrak{p}_0$. Then $(\mathfrak{p}_i; i\in s)$ satisfies the conditions of Corollary \ref{cor:germ2}, which then supplies an embedding $\alpha$ with image $\mathrm{H}=(H, \de)$ and a set $B$ and for every $i\in s$ a \Kat function $\mathfrak{p}_i'$. Let $u\in \orb_{\mathrm{H}}(\mathfrak{p}_0)$. If $\mathfrak{k}\in \mathfrak{P}$ then $\mathfrak{k}=\mathfrak{p}_0$ is extendible into $S$ and hence $\orb(\mathfrak{k}\cap S)\not=\emptyset$. In this case let $u\in \orb(\mathfrak{k}\cap S)$.

For $i\in s$ let $f(i)$ be the set of all $l\in \mathcal{B}$ with   $\de_{\min}(\mathfrak{p}_i, \mathfrak{p}_0)\leq l\leq r^{\langle -\rangle}$.  Then,  for every $l\in f(i)$, according to Corollary \ref{cor:germ2}: $\de_{\min}(\mathfrak{p}_i', \mathfrak{p}_0')=\de_{\min}(\mathfrak{p}_i, \mathfrak{p}_0)\leq l\leq r^{\langle -\rangle}$ and hence there exists a \Kat function $\mathfrak{s}_{i,l}$ with $\dom(\mathfrak{s}_{i,l})=A\cup B\cup \{u\}$ and  $\mathfrak{p}_i'\subseteq \mathfrak{s}_{i,l}$ and $\mathfrak{s}_{i,l}(u)=l$ and $\rank(\mathfrak{s}_{i,l})=l$, because $\rank(\mathfrak{p}_i')=r^{\langle -\rangle}\geq l$. It follows from $\mathfrak{p}_i'\subseteq\mathfrak{s}_{i,l}$ and $\orb_{\mathrm{H}}(\mathfrak{p}_i')\subseteq S$ that $\orb_{\mathrm{H}}(\mathfrak{s}_{i,l})\subseteq S$.

For every $i\in s$ and $l\in f(i)$ let $\mathfrak{t}_{i,l}$ be the \Kat function with $\mathfrak{t}_{i,l}\subseteq \mathfrak{s}_{i,l}$ and $\dom(\mathfrak{t}_{i,l})=A\cup \{u\}$. It follows from $\mathfrak{p}_i\subseteq \mathfrak{t}_{i,l}$ and $\rank(\mathfrak{p}_i)=r$ that $\rank(\mathfrak{t}_{i,l})=l=\rank(\mathfrak{s}_{i,l})$. Let $i,j\in s$ and $l\in f(i)$ and $m\in f(j)$ and $a\in \mathcal{B}$ minimal with $a\geq |l-m|$ and $b\in \mathcal{B}$ maximal with $b\leq l+m$. Note that $b\leq \de_{\max}(\mathfrak{p}'_i,\mathfrak{p}'_j)\leq \de_{\max}(\mathfrak{p}_i,\mathfrak{p}_j)$. Then $\de(\mathfrak{t}_{i,l}, \mathfrak{t}_{j,m})=\de\big(\mathfrak{s}_{i,l}, \mathfrak{s}_{j,m})$, because:
\begin{align*}
&\de_{\min}(\mathfrak{t}_{i,l}, \mathfrak{t}_{j,m})=\\
&=\max\{\de_{\min}(\mathfrak{p}_i, \mathfrak{p}_j), a\}
=\max\{\de_{\min}(\mathfrak{p}_i', \mathfrak{p}_j'), a\}=\, 
\de_{\min}\big(\mathfrak{s}_{i,l}, \mathfrak{s}_{j,m})
\end{align*}
and
\begin{align*}
&\de_{\max}(\mathfrak{t}_{i,l}, \mathfrak{t}_{j,m})=\\
&=\min\{\de_{\max}(\mathfrak{p}_i, \mathfrak{p}_j),b\}=b=\min\{\de_{\max}(\mathfrak{p}_i', \mathfrak{p}_j'),b\}=\de_{\max}(\mathfrak{s}_{i,l}, \mathfrak{s}_{j,m}).
\end{align*}
Hence we can apply Theorem \ref{thm:reducegen} to obtain an embedding $\beta$ of\/ $\mathrm{H}$ to $\mathrm{H}$ with $\beta(x)=x$ for all $x\in A\cup \{u\}$ with $\orb_{\beta(\mathrm{H})}(\mathfrak{t}_{i,l})\subseteq \orb_{\mathrm{H}}(\mathfrak{s}_{i,l})\subseteq S$ for all $i\in s$ and $l\in f(i)$. Let $\gamma=\beta\circ\alpha$.
\end{proof}

\begin{thm}\label{thm:cext}
Let $\mathfrak{q}$ be a \Kat function of\/ $\mathrm{M}=(M;\de)$ with $\rank(\mathfrak{q})=r$ and $S\subseteq M$ so that $\mathfrak{q}$ is extendible into $S$.

Then there exists a copy $\mathrm{C}=(C;\de)$ of\/ $\mathrm{M}$  in $\mathrm{M}$ with $\dom(\mathfrak{q})\subseteq C$ and $\orb_{\mathrm{C}}(\mathfrak{q})\subseteq S$.
\end{thm}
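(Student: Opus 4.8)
The plan is to build the copy $\mathrm{C}$ by a recursion along an enumeration of $M$, using Corollary \ref{cor:Fra1_2} as the target: $\restrict{\mathrm{M}}{C}$ is a copy of $\mathrm{M}$ as soon as every \Kat function with domain a finite subset of $C$ is realized inside $C$. I would therefore build $C$ as the increasing union of finite sets $\dom(\mathfrak{q})=A_0\subseteq A_1\subseteq\cdots$, at stage $n$ committing to realize one more \Kat function over the part built so far, while maintaining the single extra constraint that every point placed into $C$ which realizes $\mathfrak{q}$ (equivalently, every \Kat function $\mathfrak{t}\supseteq\mathfrak{q}$ that we realize) is realized by a point of $S$. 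Since realizations of a \Kat function $\mathfrak{t}$ with $\restrict{\mathfrak{t}}{\dom(\mathfrak{q})}\not=\mathfrak{q}$ never realize $\mathfrak{q}$, the whole problem is to keep realizing the extensions of $\mathfrak{q}$ inside $S$ without ever getting stuck.

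The engine is Lemma \ref{lem:germ3}, applied at each stage with $A$ the current domain, with $\mathfrak{k}$ a rank-$r$ \Kat function over $A$ extending $\mathfrak{q}$, and with $\mathfrak{P}$ the (finite) set of rank-$r$ extendible-into-$S$ \Kat functions over $A$ with $\de_{\min}(\mathfrak{k},\mathfrak{p})<r$. Since $\de_{\min}(\mathfrak{k},\mathfrak{k})=0<r$ by Corollary \ref{cor:range}, we have $\mathfrak{k}\in\mathfrak{P}$, so the lemma yields an embedding $\gamma$ fixing the committed points together with a special point $u\in\orb(\mathfrak{k})\cap S\subseteq\orb(\mathfrak{q})\cap S$ such that \emph{every} \Kat function over $A\cup\{u\}$ of value $<r$ at $u$ extending a member of $\mathfrak{P}$ has its whole orbit inside $S$. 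This is exactly a ``capturing'' device: any later realization $w$ of $\mathfrak{q}$ with $\de(w,u)\le r^{\langle-\rangle}<r$ has type over $A\cup\{u\}$ of the captured form, hence $w\in S$. Because by Theorem \ref{thm:orbits} the orbit $\orb_{\mathrm{M}}(\mathfrak{q})$ is itself a copy of $\boldsymbol{U}_{\mathcal{D}_{\mathfrak{q}}}$ with $\mathcal{D}_\mathfrak{q}=\{n\in\mathcal{D}:n\le2r\}$, one can always extend the \Kat function one is about to realize so as to also prescribe distance $r^{\langle-\rangle}$ to a suitable $u$; thus every realization of $\mathfrak{q}$ can be forced within distance $<r$ of one of the special points, and hence into $S$. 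The successive embeddings $\gamma$ each fix the finitely many previously committed points, so I would assemble the final embedding either by a direct limit, using Lemma \ref{lem:finiterem} to absorb the finitely many discarded points at each stage, or by organizing the finite approximations into a finitely branching tree and invoking K\H{o}nig's Lemma exactly as in the proof of Lemma \ref{lem:reduce}.

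The main obstacle is the bookkeeping that guarantees the recursion never gets stuck, and it has two parts. First, one must verify that the relevant rank-$r$ functions stay extendible into $S$ throughout: this rests on the hereditary property noted after Definition \ref{defin:extend} (extendibility into $S$ is inherited by copies of $\mathrm{M}$ inside $\mathrm{M}$ and by same-rank extensions), which is what keeps the hypothesis on $\mathfrak{q}$ alive under the passage to subcopies forced by Lemma \ref{lem:germ3}. Second, and this is the genuinely delicate point, one must ensure that \emph{every} realization of $\mathfrak{q}$ appearing in $C$---not merely the ones we deliberately place---is captured by some special point $u$, i.e. lies within distance $<r$ of a point produced by Lemma \ref{lem:germ3}. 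Arranging the enumeration so that each realization of $\mathfrak{q}$ is processed together with, and kept close to, a dedicated special point $u\in S$, while simultaneously realizing all the other \Kat functions needed to make $C$ a copy, is where the care lies; once this is in place, Corollary \ref{cor:Fra1_2} certifies that $\restrict{\mathrm{M}}{C}$ is the desired copy with $\orb_{\mathrm{C}}(\mathfrak{q})\subseteq S$.
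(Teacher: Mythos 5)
Your proposal follows essentially the same route as the paper: build $\mathrm{C}$ recursively along an enumeration, and at each stage apply Lemma~\ref{lem:germ3} to the \Kat function $\mathfrak{k}$ currently to be realized, taking $\mathfrak{P}$ to be the rank-$r$ extensions of $\mathfrak{q}$ to the current finite domain $A$ at $\de_{\min}$-distance $<r$ from $\mathfrak{k}$ (all extendible into $S$ by the hereditary remark after Definition~\ref{defin:extend}); the assembly via Corollary~\ref{cor:Fra1_2}, Lemma~\ref{lem:finiterem} and a K\H{o}nig-type limit is exactly what the paper's one-line ``constructed recursively'' amounts to. The one place you go astray is precisely the step you flag as delicate: you propose to \emph{force} each realization of $\mathfrak{q}$ to lie within distance $<r$ of a planted special point by prescribing distance $r^{\langle-\rangle}$ to ``a suitable $u$''. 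Theorem~\ref{thm:orbits} does not supply such a $u$: the extended type must be \Kat over all of $A$, not just over $\dom(\mathfrak{q})$, and an already-placed special point need not admit the required distance to the point about to be realized. Fortunately no forcing is needed, because the capture is automatic. If $w\in\orb(\mathfrak{q})$ is added at some stage with type $\mathfrak{k}\supseteq\mathfrak{q}$ over the current domain, then either $\rank(\mathfrak{k})=r$, in which case $\mathfrak{k}\in\mathfrak{P}$ and Lemma~\ref{lem:germ3} makes $w$ itself the special point $u\in S$; or $\mathfrak{k}$ takes a value $<r$ at some previously added point, and if $v$ is the \emph{earliest} such point, with $A_v$ the domain at $v$'s stage, then $\restrict{\mathfrak{k}}{A_v}$ extends $\mathfrak{q}$, has rank $r$, lies in the set $\mathfrak{P}_v$ used at that stage (its $\de_{\min}$-distance to $v$'s type is at most $\de(w,v)<r$), so the guarantee of Lemma~\ref{lem:germ3} at stage $v$, applied to the type of $w$ over $A_v\cup\{v\}$, already puts $w$ into $S$. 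With that observation your bookkeeping concern dissolves and your argument coincides with the paper's proof.
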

\begin{proof}
Let $\mathrm{H}=(H;\de)$ be a copy of $\mathrm{M}$ in $\mathrm{M}$ with $\dom(\mathfrak{q})\subseteq H$ and $A$ a finite subset of $H$ with  $\dom(\mathfrak{q})\subseteq A$. Let $\mathfrak{k}$ be a \Kat function of $\mathrm{H}$ with $\dom(\mathfrak{k})=A$.  Let $\mathfrak{P}$ be the set of \Kat functions $\mathfrak{p}$ with $\mathfrak{q}\subseteq \mathfrak{p}$ and $\dom(\mathfrak{p})=A$ and $\rank(\mathfrak{p})=r$ and $\de_{\min}(\mathfrak{p},\mathfrak{k})<r$. (Note that if $\mathfrak{q}\subseteq \mathfrak{k}$ and $\rank(\mathfrak{k})=r$ then $\mathfrak{k}\in \mathfrak{P}$.)  Because  $\mathfrak{p}$ is extendible into $S$ for every $\mathfrak{p}\in \mathfrak{P}$,  there exists, according to Lemma \ref{lem:germ3}, an embedding $\gamma$ of $\mathrm{H}$ into $\mathrm{H}$ with $\gamma(x)=x$ for all $x\in A$  and a point $u\in \orb_{\gamma(\mathrm{H})}(\mathfrak{k})$ so that $\orb_{\gamma(\mathrm{H})}(\mathfrak{s})\subseteq S$ for every \Kat function $\mathfrak{s}$ with $\dom(\mathfrak{s})=A\cup \{u\}$ and $\mathfrak{s}(u)<r$ for which there is a \Kat function $\mathfrak{p}\in \mathfrak{P}$ with $\mathfrak{p}\subseteq \mathfrak{s}$ . If\/  $\mathfrak{k}\in \mathfrak{P}$ then $u\in S$.

It follows that the copy $\mathrm{C}$ can be constructed recursively.
\end{proof}

\section{Colouring $\mathbf{M}_{\boldsymbol{\mathcal{D}}}$}

Let $\mathcal{D}$ be a universal set of numbers with $\min(\mathcal{D}\setminus\{0\})=1$ and $\mathcal{B}$ the first block of $\mathcal{D}$, that is $\min\mathcal{B}=1$. Let $\mathrm{M}_{\mathcal{D}}=(M_{\mathcal{D}}, \de)\in \mathfrak{U}_{\mathcal{D}}$. Let $\chi: M\to \{0,1\}=2$ be a colouring  of $M_{\mathcal{D}}$ and $S_0:=\{x\in M_{\mathcal{D}} : \chi(x)=0\}$ and $S_1:=\{x\in M_{\mathcal{D}} : \chi(x)=1\}$. Then $\chi$ induces a colouring of every copy $\mathrm{M}$ of $\mathrm{M}_{\mathcal{D}}$ in $\mathrm{M}_{\mathcal{D}}$.

Let $\mathrm{M}=(M, \de)$ be a copy of $\mathrm{M}_{\mathcal{D}}$ in $\mathrm{M}_{\mathcal{D}}$.   For $E=(v_i; i\in \omega)$ an enumeration of $M$ and $\alpha$ an embedding of $\mathrm{M}$ into $\mathrm{M}$ we denote by $\alpha(E)$ the enumeration $(\alpha(v_i); i\in \omega)$ of $\alpha(M)$ and for $n\in \omega$ by $E_n$ the initial interval $(v_i; i\in n)$ of $E$.

A \Kat function $\mathfrak{t}$ is {\em monochromatic in clolour $i\in 2$} on $M$ if $\chi(x)=i$ for all $x\in \orb(\mathfrak{t})$ and $\mathfrak{t}$ is {\em monochromatic} on $M$ if there is $i\in 2$ so that $\mathfrak{t}$ is monochromatic in colour $i$ on $M$. For $r\in \mathcal{D}$, the  enumeration $E$ is {\em $r$-uniform}  from $l\in \omega$ on  if for every $n\in \omega$   with  $l<n$  every \Kat function $\mathfrak{p}$ with $\dom(\mathfrak{p})=E_n$  and $\rank(\mathfrak{p})=r$ is monochromatic.  

\vskip 4pt
\noindent
For $ r\in \mathcal{D}$ let $\mathbf{p}(r)$ be the statement:
\vskip 2pt
\noindent
$\mathbf{p}(r)$: For  every copy $\mathrm{M}=(M, \de)$ of $\mathrm{M}_{\mathcal{D}}$ in $\mathrm{M}_{\mathcal{D}}$ and every enumeration $E=(v_i; i\in \omega)$ of $M$  and    $n\in \omega$  there exists an embedding $\alpha$ of $\mathrm{M}$ into $\mathrm{M}$ with $\alpha(x)=x$ for all $x\in E_n$ and a continuation of the enumeration of $E_n$ to an enumeration  $\alpha(E)=(\alpha(v_i) : i\in \omega)$ of $\alpha(M)$ which is $r$-uniform from $n$.
\vskip 5pt

\begin{lem}\label{lem:color1}
Let $E=(v_i; i\in \omega)$ an enumeration of $M$.   Let    $n\in \omega$ and $\mathfrak{p}$ a \Kat function with $\dom(\mathfrak{p})=E_n$ and $\rank(\mathfrak{p})=1$.  Then there exists an embedding $\alpha$ of \/ $\mathrm{M}$ into $\mathrm{M}$ with $\alpha(x)=x$ for all $x\in E_n$ so that $\mathfrak{p}$ is  monochromatic. 
\end{lem}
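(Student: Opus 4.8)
The plan is to exploit the one feature that makes this the base case: because $\min(\mathcal D\setminus\{0\})=1$ and $\rank(\mathfrak p)=1$, every \Kat function $\mathfrak s$ of $\mathrm M$ with $\mathfrak p\subseteq\mathfrak s$ automatically has $\rank(\mathfrak s)=1=\rank(\mathfrak p)$: the values of $\mathfrak s$ on $\dom(\mathfrak s)\setminus E_n$ are at least $1$, and $\mathfrak s$ agrees with $\mathfrak p$ on $E_n$, where the value $1$ is attained. Equality of ranks is exactly the hypothesis of Corollary \ref{cor:reduce}, so for $r=1$ one avoids the extendibility apparatus of Section~7. I would therefore split into two cases according to whether $\mathfrak p$ has a monochromatic extension, calling a \Kat function $\mathfrak s\supseteq\mathfrak p$ \emph{monochromatic in colour $i$} when $\orb(\mathfrak s)\subseteq S_i$.

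In the first case some \Kat function $\mathfrak s$ with $\mathfrak p\subseteq\mathfrak s$ satisfies $\orb(\mathfrak s)\subseteq S_i$ for some $i\in 2$. Since $\rank(\mathfrak p)=\rank(\mathfrak s)=1$, Corollary \ref{cor:reduce} applied to the pair $\mathfrak p\subseteq\mathfrak s$ produces an embedding $\alpha$ of $\mathrm M$ into $\mathrm M$ fixing $\dom(\mathfrak p)=E_n$ pointwise and satisfying $\orb_{\alpha(\mathrm M)}(\mathfrak p)\subseteq\orb_{\mathrm M}(\mathfrak s)\subseteq S_i$. Thus $\mathfrak p$ is monochromatic on $\alpha(\mathrm M)$, which is the conclusion.

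In the second case no extension $\mathfrak s\supseteq\mathfrak p$ is monochromatic, so $\orb(\mathfrak s)$ meets both $S_0$ and $S_1$ for every such $\mathfrak s$. Here I would construct a copy $\mathrm C$ of $\mathrm M$ with $E_n\subseteq C$ directly, as an increasing union of finite sets starting from $E_n$, by recursively realizing every \Kat function with domain in $C$ so that $\mathrm C$ is a copy of $\mathrm M$ by Corollary \ref{cor:Fra1_2}, while imposing the single additional demand that every added point which realizes $\mathfrak p$ be taken in $S_0$. When the bookkeeping calls for the realization of a \Kat function $\mathfrak t$ I first extend its domain across $E_n$: if some \Kat extension disagrees with $\mathfrak p$ on $E_n$ I use it, so the realization avoids $\orb(\mathfrak p)$ and its colour is irrelevant; otherwise every extension agrees with $\mathfrak p$ on $E_n$, so $\mathfrak t$ already forces its realizations into $\orb(\mathfrak p)$ and hence extends $\mathfrak p$. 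In the latter situation the case hypothesis gives $\orb(\mathfrak t)\cap S_0\neq\emptyset$, and in fact $\orb(\mathfrak t)\cap S_0$ is infinite — were it finite, adjoining its finitely many $S_0$-points to $\dom(\mathfrak t)$ with positive values would give a further extension of $\mathfrak p$ whose orbit is disjoint from $S_0$, contradicting the hypothesis — so a realization in $S_0$ avoiding the finitely many points used so far is available. The resulting $\mathrm C$ then has $\orb_{\mathrm C}(\mathfrak p)=\orb_{\mathrm M}(\mathfrak p)\cap C\subseteq S_0$.

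The main obstacle is the recursion of the second case: one must interleave the two demands so that, on the one hand, every \Kat function with domain in the growing set $C$ is eventually realized (to secure a copy of $\mathrm M$ via Corollary \ref{cor:Fra1_2}), and on the other hand every realization of $\mathfrak p$ is kept inside a single colour class, including the careful treatment of requirements whose domain does not yet contain $E_n$. The only genuine verifications needed are that a \Kat function always admits a \Kat extension across $E_n$ with nonempty orbit and that one can prescribe whether that extension agrees with $\mathfrak p$; both follow from the description of admissible values in Lemma \ref{lem:range} and Corollary \ref{cor:range}. The first case, by contrast, is immediate, and it is exactly the automatic equality of ranks for $r=1$ — which fails once $\rank(\mathfrak p)>1$ — that lets Corollary \ref{cor:reduce} finish the argument without the heavier machinery.
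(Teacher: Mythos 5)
Your proposal is correct and follows essentially the same route as the paper: the same dichotomy on whether $\mathfrak p$ admits a monochromatic \Kat extension, with the first case settled by Corollary \ref{cor:reduce} via the automatic equality $\rank(\mathfrak s)=\rank(\mathfrak p)=1$, and the second by a recursive construction of a copy in which every realization of $\mathfrak p$ is forced into one colour class. The only difference is that the paper leaves the recursion of Case 2 as a one-line assertion, whereas you supply the bookkeeping (extending requirements across $E_n$ and the infiniteness of $\orb(\mathfrak t')\cap S_0$), all of which checks out.
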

\begin{proof}{\  }\\
\noindent
\textbf{C}ase 1: There exists a \Kat function $\mathfrak{s}$ with $\mathfrak{p}\subseteq \mathfrak{s}$ and $\chi(x)=0$ for all $x\in \orb(\mathfrak{s})$. 

Then $\rank(\mathfrak{s})=\rank(\mathfrak{p})=1$. According to Corollary~\ref{cor:reduce} with $\mathfrak{t}$ for $\mathfrak{p}$ there exists an isometry $\alpha$ with $\alpha(x)=x$ for all $x\in  \dom(\mathfrak{p})=E_n$ and $\alpha(x)\in \orb(\mathfrak{s})$ for all $x\in \orb(\mathfrak{p})$. Then:  $x\in \orb_{\alpha(\mathrm{M})}(\mathfrak{p})$ implies $x\in \orb_{\mathrm{M}}(\mathfrak{s})$ and hence $\chi(x)=0$. 
\vskip 4pt
\noindent
\textbf{C}ase 2: For every \Kat function $\mathfrak{s}$  with $\mathfrak{p}\subseteq \mathfrak{s}$ there exists a point $v\in \orb(\mathfrak{s})$ with $\chi(v)=1$. Then we construct  recursively  an embedding $\alpha$ with $\alpha(x)=x$ for all $x\in E_n$ and so that $\mathfrak{p}$ is monochromatic in colour 1 on $\alpha(E)$. 
\end{proof}

\begin{cor}\label{cor:color1}  Let $l\in \omega$.
There exists an embedding $\alpha$ of \/ $\mathrm{M}$ into $\mathrm{M}$ with $\alpha(x)=x$ for all 
$x\in E_l$  so that $\alpha(E)$ is 1-uniform from $l$. Hence $\mathbf{p}(1)$. 
\end{cor}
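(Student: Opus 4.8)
The plan is to iterate Lemma \ref{lem:color1} across all levels $n>l$ by a recursive construction, composing the embeddings it produces into a single embedding $\alpha$. The conceptual engine is a simple preservation principle: if $\mathfrak{p}$ is a \Kat function that is monochromatic on a copy $\mathrm{N}$ and $\beta$ is an embedding of $\mathrm{N}$ into $\mathrm{N}$ fixing $\dom(\mathfrak{p})$ pointwise, then $\mathfrak{p}$ stays monochromatic on $\beta(\mathrm{N})$. This holds because $\beta(N)\subseteq N$ and $\dom(\mathfrak{p})\subseteq \beta(N)$ force $\orb_{\beta(\mathrm{N})}(\mathfrak{p})\subseteq \orb_{\mathrm{N}}(\mathfrak{p})$, so $\chi$ remains constant on the smaller orbit. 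Since $\mathcal{D}$ is finite and each relevant domain is finite, at every level there are only finitely many \Kat functions of rank $1$ to treat, so each stage of the construction is a finite composition.

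Concretely, I would build a nested sequence of copies $\mathrm{M}=\mathrm{M}^{(l)}\supseteq \mathrm{M}^{(l+1)}\supseteq\cdots$, each with an enumeration $E^{(n)}=(v^{(n)}_i;i\in\omega)$, subject to two demands: the initial segment $E^{(n)}_n$ is frozen from stage $n$ on (so $v^{(m)}_i=v^{(n)}_i$ whenever $m\ge n$ and $i<n$, and in particular $E^{(n)}_l=E_l$), and for every $m$ with $l<m\le n$ every rank-$1$ \Kat function with domain $E^{(n)}_m$ is monochromatic on $\mathrm{M}^{(n)}$. At the stage $n\to n+1$ I enumerate the finitely many rank-$1$ \Kat functions $\mathfrak{p}_1,\dots,\mathfrak{p}_k$ with domain $E^{(n)}_{n+1}$ and apply Lemma \ref{lem:color1} to $\mathrm{M}^{(n)}$ (with index $n+1$) successively: each application fixes $E^{(n)}_{n+1}$ pointwise and makes the next $\mathfrak{p}_j$ monochromatic, while the preservation principle guarantees that the already-treated ones survive. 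Composing these $k$ embeddings yields $\beta^{(n+1)}$; set $\mathrm{M}^{(n+1)}=\beta^{(n+1)}(\mathrm{M}^{(n)})$ and $v^{(n+1)}_i=\beta^{(n+1)}(v^{(n)}_i)$. Because $\beta^{(n+1)}$ fixes $E^{(n)}_{n+1}$, the first $n+1$ points are frozen and the lower-level monochromaticity persists.

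Finally I would pass to the limit. For each $i$ the values $v^{(n)}_i$ stabilise for $n>i$, so $\alpha(v_i):=v^{(i+1)}_i$ is well defined; as each $E^{(n)}$ is an isometric image of $E$, the map $\alpha$ preserves all distances and hence is an embedding of $\mathrm{M}$ into $\mathrm{M}$ fixing $E_l$, with image enumeration $\alpha(E)$ whose initial segment $(\alpha(E))_n$ equals $E^{(n)}_n$. The nesting $\mathrm{M}^{(n)}\supseteq\mathrm{M}^{(n+1)}\supseteq\cdots$ gives $\alpha(M)\subseteq\bigcap_n M^{(n)}$, so for any $n>l$ and any rank-$1$ \Kat function $\mathfrak{p}$ with $\dom(\mathfrak{p})=(\alpha(E))_n=E^{(n)}_n$, the preservation principle (applied to $\mathrm{M}^{(n)}$, on which $\mathfrak{p}$ is already monochromatic) shows that $\mathfrak{p}$ is monochromatic on $\alpha(\mathrm{M})$. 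Thus $\alpha(E)$ is $1$-uniform from $l$, which is exactly $\mathbf{p}(1)$ for the given data.

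I expect the main obstacle to be the limit bookkeeping rather than any single metric computation: one must arrange the freezing of initial segments so that $\alpha$ is well defined, verify that the image copies genuinely nest (so that monochromaticity transfers downward to $\alpha(\mathrm{M})$), and check that treating a later level never undoes the monochromaticity achieved at an earlier one --- all of which hinge on keeping the finite domains $E^{(n)}_m$ pointwise fixed by every subsequent embedding.
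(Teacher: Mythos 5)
Your proposal is correct and is essentially the argument the paper intends (it states the corollary without proof, as a routine iteration of Lemma \ref{lem:color1}): you iterate the lemma over all levels and all finitely many rank-$1$ \Kat functions at each level, use the fact that fixing $\dom(\mathfrak{p})$ pointwise shrinks orbits and hence preserves monochromaticity, and pass to the limit via the frozen initial segments. The bookkeeping you describe (freezing $E^{(n)}_n$, nesting of the copies, stabilisation of $v^{(n)}_i$) is exactly what makes the fusion work, so no gap remains.
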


Let $\mathcal{D}$ be a homgen set and $\mathcal{B}$ the block of $\mathcal{D}$ with $\min(\mathcal{D}\setminus\{0\})=\min\mathcal{B}:=\mathbf{m}$ and $r\in \mathcal{B}$ with $r^{\langle -\rangle}<r$.
Let $\mathrm{M}=(M, \de)$ be a copy of $\mathrm{M}_{\mathcal{D}}$.

\begin{lem}\label{lem:eithor}
Let $1< r\in \mathcal{B}$ and $\mathbf{p}(r^{\langle -\rangle})$.   Let $\mathfrak{p}$ be a \Kat function of\/ $\mathrm{M}$ with $\rank(\mathfrak{p})=r$ which is not extendible into $S_0$.    

Then there exists a copy $\mathrm{C}=(C; \de)$ of\/ $\mathrm{M}$ in $\mathrm{M}$ with $\dom(\mathfrak{p})\subseteq C$ and $\orb_{\mathrm{C}}(\mathfrak{p})\subseteq S_1$. 
\end{lem}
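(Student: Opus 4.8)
The plan is to deduce the statement from the central extension theorem (Theorem \ref{thm:cext}) together with the orbit reduction of Corollary \ref{cor:reduce}, after first extracting a concrete witness from the failure of extendibility into $S_0$. Since $\mathfrak{p}$ is not extendible into $S_0$, unwinding Definition \ref{defin:extend} produces a copy $\mathrm{H}_0$ of $\mathrm{M}$ with $\dom(\mathfrak{p})\subseteq H_0$ and a \Kat function $\mathfrak{q}$ with $\mathfrak{p}\subseteq\mathfrak{q}$, $\dom(\mathfrak{q})\subseteq H_0$ and $\rank(\mathfrak{q})=r$, such that for \emph{every} embedding $\alpha$ of $\mathrm{H}_0$ into $\mathrm{H}_0$ fixing $\dom(\mathfrak{q})$ and \emph{every} \Kat function $\mathfrak{g}\supseteq\mathfrak{q}$ with $\rank(\mathfrak{g})=r^{\langle -\rangle}$ and $\dom(\mathfrak{g})\subseteq\alpha(H_0)$ one has $\orb_{\alpha(\mathrm{H}_0)}(\mathfrak{g})\not\subseteq S_0$. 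I would then reduce the whole lemma to the claim that $\mathfrak{q}$ is extendible into $S_1$ on $\mathrm{H}_0$: granting this, Theorem \ref{thm:cext} applied inside $\mathrm{H}_0$ supplies a copy $\mathrm{C}'$ with $\dom(\mathfrak{q})\subseteq C'$ and $\orb_{\mathrm{C}'}(\mathfrak{q})\subseteq S_1$, and then Corollary \ref{cor:reduce} applied to $\mathfrak{p}\subseteq\mathfrak{q}$ (both of rank $r$) yields an embedding $\beta$ fixing $\dom(\mathfrak{p})$ with $\orb_{\beta(\mathrm{C}')}(\mathfrak{p})\subseteq\orb_{\mathrm{C}'}(\mathfrak{q})\subseteq S_1$; the copy $\mathrm{C}=\beta(\mathrm{C}')$ is the required one.

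The heart of the argument is to verify that $\mathfrak{q}$ is extendible into $S_1$ on $\mathrm{H}_0$. So let $\mathrm{H}$ be any copy of $\mathrm{M}$ in $\mathrm{H}_0$ with $\dom(\mathfrak{q})\subseteq H$ and let $\mathfrak{q}'$ be any \Kat function with $\mathfrak{q}\subseteq\mathfrak{q}'$, $\dom(\mathfrak{q}')\subseteq H$ and $\rank(\mathfrak{q}')=r$; I must produce an embedding and a rank-$r^{\langle -\rangle}$ extension whose orbit lands in $S_1$. Here the hypothesis $\mathbf{p}(r^{\langle -\rangle})$ enters: enumerate $\mathrm{H}$ by some $E$ with $\dom(\mathfrak{q}')$ as the initial block $E_n$ and with $v_n$ a point of $\orb_{\mathrm{H}}(\mathfrak{q}')$, and apply $\mathbf{p}(r^{\langle -\rangle})$ to obtain an embedding $\alpha$ of $\mathrm{H}$ into $\mathrm{H}$ fixing $E_n$ for which $\alpha(E)$ is $r^{\langle -\rangle}$-uniform from $n$. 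Let $\mathfrak{g}$ be the extension of $\mathfrak{q}'$ with $\dom(\mathfrak{g})=\dom(\mathfrak{q}')\cup\{\alpha(v_n)\}$ and $\mathfrak{g}(\alpha(v_n))=r^{\langle -\rangle}$; it is a \Kat function of rank $r^{\langle -\rangle}$ by Corollary \ref{cor:orbits} (since $\alpha(v_n)\in\orb(\mathfrak{q}')$ and $r^{\langle -\rangle}\le 2r$), and its domain is the initial block $\alpha(E)_{n+1}$. By $r^{\langle -\rangle}$-uniformity, $\orb_{\alpha(\mathrm{H})}(\mathfrak{g})$ is monochromatic; on the other hand $\alpha(\mathrm{H})$ is a copy of $\mathrm{M}$ inside $\mathrm{H}_0$ containing $\dom(\mathfrak{q})$, so by homogeneity (Theorem \ref{thm:Fra1}) it is the image of an embedding of $\mathrm{H}_0$ into $\mathrm{H}_0$ fixing $\dom(\mathfrak{q})$, and since $\mathfrak{g}\supseteq\mathfrak{q}'\supseteq\mathfrak{q}$ the witness property gives $\orb_{\alpha(\mathrm{H})}(\mathfrak{g})\not\subseteq S_0$. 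A monochromatic orbit not contained in $S_0$ is contained in $S_1$, which is exactly what extendibility of $\mathfrak{q}$ into $S_1$ demands.

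The step I expect to require the most care is precisely this bookkeeping of quantifiers. One cannot hope to prove directly that $\mathfrak{p}$ itself is extendible into $S_1$ and then invoke Theorem \ref{thm:cext}: for an unrelated extension $\mathfrak{p}'$ of $\mathfrak{p}$ the monochromatic balls produced by $\mathbf{p}(r^{\langle -\rangle})$ could all lie in $S_0$, so extendibility of $\mathfrak{p}$ into $S_1$ may genuinely fail. The point of passing to the specific witness $\mathfrak{q}$ is that the non-extendibility of $\mathfrak{p}$ into $S_0$ is recorded there as a statement quantified over \emph{all} embeddings and \emph{all} rank-$r^{\langle -\rangle}$ extensions of $\mathfrak{q}$, which is exactly the universal input needed to defeat $S_0$ for every configuration arising in the verification that $\mathfrak{q}$ is extendible into $S_1$. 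The two remaining points to check carefully are that the single-point extension $\mathfrak{g}$ can always be arranged to have rank exactly $r^{\langle -\rangle}$ with an initial-segment domain (so that $\mathbf{p}(r^{\langle -\rangle})$ applies to it), which forces the choice $v_n\in\orb(\mathfrak{q}')$ above, and that the monochromatized copy $\alpha(\mathrm{H})$ really is an admissible image for the witness clause, which is where homogeneity is used.
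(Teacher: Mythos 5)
Your proposal is correct and follows essentially the same route as the paper: extract the witness extension $\mathfrak{q}$ (the paper's $\mathfrak{p}'$) from the failure of extendibility into $S_0$, use $\mathbf{p}(r^{\langle -\rangle})$ to make the rank-$r^{\langle -\rangle}$ one-point extension monochromatic and hence forced into $S_1$ by the witness property, conclude that $\mathfrak{q}$ is extendible into $S_1$, and finish with Theorem~\ref{thm:cext} followed by Corollary~\ref{cor:reduce}. Your handling of the domain of $\mathfrak{g}$ via $\alpha(v_n)$ and the homogeneity argument for realizing $\alpha(\mathrm{H})$ as an admissible image are minor presentational variants of the paper's composition $\alpha=\beta\circ\gamma$, if anything slightly more careful.
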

\begin{proof}
There exists a copy $\mathrm{H}=(H; \de)$ of $\mathrm{M}$ in $\mathrm{M}$ with $\dom(\mathfrak{p})\subseteq H$ and a \Kat function $\mathfrak{p}'$ with $\dom(\mathfrak{p}')\subseteq H$ and $\mathfrak{p}\subseteq \mathfrak{p}'$ and $\rank(\mathfrak{p})=r$ so that $\orb_{\alpha(\mathrm{H})}(\mathfrak{g})\not\subseteq S_0$  for every embedding $\alpha$ of $\mathrm{H}$ into $\mathrm{H}$ with $\alpha(x)=x$ for all $x\in \dom(\mathfrak{p}')$ and all \Kat functions $\mathfrak{g}$ with $\dom(\mathfrak{g})\subseteq \alpha(H)$ and $\mathfrak{p}'\subseteq \mathfrak{g}$ and $\rank(\mathfrak{g})=r^{\langle -\rangle}$. We will show that $\mathfrak{p}'$ is extendible into $S_1$ on $\mathrm{H}$.

Let  $\mathrm{L}=(L; \de)$ be a copy of $\mathrm{H}$ in $\mathrm{H}$ with $\dom(\mathfrak{p}')\subseteq L$ and $\gamma$ an  embedding with $\gamma(H)=L$ and $\gamma(x)=x$ for all $x\in \dom(\mathfrak{p}')$. Let $\mathfrak{p}''$ be a \Kat function with $\dom(\mathfrak{p}'')\subseteq L$ and $\mathfrak{p}'\subseteq \mathfrak{p}''$ and $\rank(\mathfrak{p}'')=r$.  Let $v\in \orb_\mathrm{L}(\mathfrak{p}'')$ and $E=(v_i; i\in \omega)$  an enumeration of $L$  so that $\dom(\mathfrak{p}'')=E_n$ and $v=v_n$ for $n=|\dom(\mathfrak{p})''|$.  Let $\mathfrak{g}$ be the \Kat function with $\dom(\mathfrak{g})=E_{n+1}$ and $\mathfrak{p}''\subseteq \mathfrak{g}$ and $\mathfrak{g}(v_n)=r^{\langle -\rangle}$.   Because $\mathbf{p}(r^{\langle -\rangle})$ there exists an embedding $\beta$ of\/ $\mathrm{L}$ into $\mathrm{L}$ with $\beta(x)=x$ for all $x\in E_n$ so that $\beta(E)$   is $r^{\langle -\rangle}$-uniform from $n$ on. Then $\orb_{\alpha(\mathrm{H})}(\mathfrak{g})\subseteq S_0$ or $\orb_{\alpha(\mathrm{H})}(\mathfrak{g})\subseteq S_1$ for  $\alpha=\beta\circ\gamma$. Also,   $\alpha(x)=x$ for all $x\in \mathfrak{p}'$ and $\dom(\mathfrak{g})\subseteq \alpha(H)$ and $\mathfrak{p}'\subseteq \mathfrak{g}$ and $\rank(\mathfrak{g})=r^{\langle-\rangle}$. Hence $\orb_{\alpha(\mathrm{H})}(\mathfrak{g})\subseteq S_1$. 

It follows that $\mathfrak{p}'$ is extendible into $S_1$ on $\mathrm{H}$ and therefore from Theorem~\ref{thm:cext} that there is a copy $\mathrm{N}=(N;\de)$ of\/  $\mathrm{H}$ in $\mathrm{H}$ with $\dom(\mathfrak{p}')\subseteq N$ and $\orb_{\mathrm{N}}(\mathfrak{p}')\subseteq S_1$. According to Corollary \ref{cor:reduce} with $\mathfrak{p}$ for $\mathfrak{t}$ and $\mathfrak{p}'$ for $\mathfrak{s}$ and $\mathrm{N}$ for $\mathrm{M}$,  there exists an embedding $\delta$ of\/ $\mathrm{N}$ into $\mathrm{N}$ with $\delta(x)=x$ for all $x\in \dom(\mathfrak{p})$  and $\delta(x)\in \orb_\mathrm{N}(\mathfrak{p}')\subseteq S_1$. Let $\mathrm{C}=\delta(\mathrm{N})$. 
\end{proof}

\begin{lem}\label{lem:eithorcor}
Let  $\mathfrak{p}$ be a \Kat function of\/  $\mathrm{M}$ with $\rank(\mathfrak{p})=r\in \mathcal{B}$  and $2\cdot r^{\langle -\rangle}< r$ and with   $\mathbf{p}(r^{\langle -\rangle})$.  

Then there exists a copy $\mathrm{C}=(C; \de)$ of\/ $\mathrm{M}$ in $\mathrm{M}$ with $\dom(\mathfrak{p})\subseteq C$ and $\orb_{\mathrm{C}}(\mathfrak{p})\subseteq S_0$ or $\orb_{\mathrm{C}}(\mathfrak{p})\subseteq S_1$. 
\end{lem}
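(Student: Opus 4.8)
The plan is to prove the statement as a two-case dichotomy according to whether the \Kat function $\mathfrak{p}$ is extendible into $S_0$, so that each case is settled by one of the two engines already in place: the central extension theorem (Theorem \ref{thm:cext}) and Lemma \ref{lem:eithor}.

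Suppose first that $\mathfrak{p}$ is extendible into $S_0$. Then Theorem \ref{thm:cext}, applied with $\mathfrak{q}=\mathfrak{p}$ and $S=S_0$, directly yields a copy $\mathrm{C}=(C;\de)$ of $\mathrm{M}$ in $\mathrm{M}$ with $\dom(\mathfrak{p})\subseteq C$ and $\orb_{\mathrm{C}}(\mathfrak{p})\subseteq S_0$, which is one of the two desired alternatives.

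Suppose instead that $\mathfrak{p}$ is not extendible into $S_0$. Here I would invoke Lemma \ref{lem:eithor}: its hypotheses $1<r\in\mathcal{B}$ and $\mathbf{p}(r^{\langle -\rangle})$ coincide with the standing assumptions, since $\mathbf{p}(r^{\langle -\rangle})$ is given and the condition $2\cdot r^{\langle -\rangle}<r$ places $r$ at the foot of its block, while the failure of extendibility into $S_0$ is the remaining hypothesis of that lemma. Lemma \ref{lem:eithor} then produces a copy $\mathrm{C}$ with $\dom(\mathfrak{p})\subseteq C$ and $\orb_{\mathrm{C}}(\mathfrak{p})\subseteq S_1$, the other alternative. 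As the two cases are exhaustive, the lemma follows.

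The only point needing care is the boundary case in which $r$ equals the overall minimum $\mathbf{m}$ (here $\mathbf{m}=1$), where the notion of extendibility (Definition \ref{defin:extend}, which presupposes $1<r$) is not available and the dichotomy above does not literally apply. In that situation one argues directly from the rank-one colouring result: Lemma \ref{lem:color1}, equivalently Corollary \ref{cor:color1}, already renders a rank-$1$ \Kat function monochromatic on a suitable copy, giving $\orb_{\mathrm{C}}(\mathfrak{p})\subseteq S_0$ or $\orb_{\mathrm{C}}(\mathfrak{p})\subseteq S_1$ outright. I do not anticipate a genuine obstacle at this lemma, since the substantive work has already been absorbed into Theorem \ref{thm:cext} and Lemma \ref{lem:eithor}; the role of $2\cdot r^{\langle -\rangle}<r$ is structural, singling out the block-foot case and guaranteeing the availability of the inductive datum $\mathbf{p}(r^{\langle -\rangle})$ that Lemma \ref{lem:eithor} consumes.
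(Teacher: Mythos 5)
Your proof is correct and is essentially the paper's own argument: the paper's proof of this lemma is precisely the dichotomy on whether $\mathfrak{p}$ is extendible into $S_0$, with the affirmative case handled by Theorem \ref{thm:cext} and the negative case by Lemma \ref{lem:eithor}. Your extra care about the boundary case $r=1$ (where Definition \ref{defin:extend} and Lemma \ref{lem:eithor} presuppose $1<r$) addresses a point the paper silently skips, and your fallback to Corollary \ref{cor:color1} there is sound.
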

\begin{proof}
If $\mathfrak{p}$ is not extendible into $S_0$ the Lemma follows from Lemma~\ref{lem:eithor}. If $\mathfrak{p}$ is extendible into $S_0$ the Lemma follows from Theorem~\ref{thm:cext}
\end{proof}

\begin{cor}\label{cor:eitherorn}
Let  $r\in \mathcal{B}$  and $2\cdot r^{\langle -\rangle}< r$. (That is $r$ is not the minimum of $\mathcal{B}$. ) Then  
$\mathbf{p}(r^{\langle -\rangle})$ implies $\mathbf{p}(r)$.
\end{cor}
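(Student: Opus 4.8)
The plan is to obtain $\mathbf{p}(r)$ from Lemma~\ref{lem:eithorcor} by exactly the diagonalisation that produces $\mathbf{p}(1)$ (Corollary~\ref{cor:color1}) from Lemma~\ref{lem:color1}. The hypotheses $2\cdot r^{\langle -\rangle}<r$ and $\mathbf{p}(r^{\langle -\rangle})$ are precisely what Lemma~\ref{lem:eithorcor} requires, and $\mathbf{p}(r^{\langle -\rangle})$ is a statement quantified over \emph{all} copies of $\mathrm{M}_{\mathcal{D}}$, so for any copy $\mathrm{C}$ and any \Kat function $\mathfrak{p}$ of rank $r$ with $\dom(\mathfrak{p})\subseteq C$ we may pass to a subcopy of $\mathrm{C}$ on which $\mathfrak{p}$ is monochromatic. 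The structural fact that makes iterating this possible is that monochromaticity is hereditary under shrinking: if $\dom(\mathfrak{p})\subseteq C'\subseteq C$ then $\orb_{\restrict{\mathrm{M}}{C'}}(\mathfrak{p})=\orb_{\restrict{\mathrm{M}}{C}}(\mathfrak{p})\cap C'$, so a \Kat function already monochromatic on a copy stays monochromatic on every subcopy containing its domain.

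Given a copy $\mathrm{M}=(M,\de)$, an enumeration $E=(v_i;i\in\omega)$ and $n\in\omega$, I would construct the image enumeration $w_0,w_1,\dots$ of $\alpha(M)$ together with a decreasing chain of copies $\mathrm{M}=\mathrm{C}_n\supseteq\mathrm{C}_{n+1}\supseteq\cdots$ recursively, maintaining $w_i=v_i$ for $i<n$ and $\de(w_i,w_j)=\de(v_i,v_j)$ throughout, so that $\alpha(v_i):=w_i$ is forced to be an isometry. At stage $k$ I have $w_0,\dots,w_{k-1}\in C_k$. If $k\geq n+1$ I list the finitely many \Kat functions of rank $r$ with domain $\{w_0,\dots,w_{k-1}\}$ and apply Lemma~\ref{lem:eithorcor} once to each in turn, each time replacing the current copy by the subcopy it supplies; by the heredity observation the functions already treated remain monochromatic, so I end with a subcopy $\mathrm{C}_k'\subseteq\mathrm{C}_k$ on which all of them are monochromatic (for $k=n$ I set $\mathrm{C}_n'=\mathrm{C}_n$). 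Since $\mathrm{C}_k'$ is a copy of the Urysohn space and the type function $\mathfrak{q}$ with $\mathfrak{q}(w_i)=\de(v_k,v_i)$ is a \Kat function (the configuration $v_0,\dots,v_k$ is metric and transports along the partial isometry $v_i\leftrightarrow w_i$), it is realised in $\mathrm{C}_k'$ by Theorem~\ref{thm:Fra1}; I take such a realisation as $w_k$ and set $\mathrm{C}_{k+1}=\mathrm{C}_k'$.

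In the limit $\alpha(v_i)=w_i$ is an isometry of $\mathrm{M}$ into $\mathrm{M}$ fixing $E_n$ pointwise, and $\alpha(E)=(w_i;i\in\omega)$ enumerates the copy $\alpha(M)$. Because every $w_j$ with $j\geq k$ lies in $\mathrm{C}_k'$ and $\{w_0,\dots,w_{k-1}\}\subseteq C_k'$, we have $\alpha(M)\subseteq C_k'$; hence for each $m>n$ and each rank-$r$ \Kat function $\mathfrak{p}$ with $\dom(\mathfrak{p})=\{w_0,\dots,w_{m-1}\}$ the orbit $\orb_{\alpha(M)}(\mathfrak{p})$ is contained in the monochromatic orbit $\orb_{\mathrm{C}_m'}(\mathfrak{p})$, and so is itself monochromatic. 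Thus $\alpha(E)$ is $r$-uniform from $n$, which is exactly $\mathbf{p}(r)$.

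The genuinely delicate point, and the only thing beyond bookkeeping, is the coordination captured by the heredity observation of the first paragraph: each invocation of Lemma~\ref{lem:eithorcor} shrinks the copy, and one must be certain that these successive shrinkings neither undo the monochromaticity arranged at earlier substages and earlier stages nor expel the committed points $w_k$ from later copies. Both hold precisely because every subcopy we ever pass to contains the entire set of points committed so far, so that orbits computed in the final copy sit inside orbits that were already coloured monochromatically. Everything else, namely the finiteness of the list of \Kat functions on a fixed finite domain, the existence of the realisations $w_k$, and the applicability of Lemma~\ref{lem:eithorcor} inside each $\mathrm{C}_k$ (which rests on $\mathbf{p}(r^{\langle -\rangle})$ holding for every copy), is routine.
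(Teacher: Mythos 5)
Your proof is correct and is exactly the argument the paper leaves implicit: the corollary is stated without proof because it follows from Lemma~\ref{lem:eithorcor} by the same routine recursion that yields Corollary~\ref{cor:color1} from Lemma~\ref{lem:color1}, namely a decreasing chain of copies each containing all previously committed points, with the heredity of orbits under passing to subcopies doing the bookkeeping. Your write-up fills in that diagonalization correctly, including the two points that actually need checking (that earlier monochromaticity survives later shrinking, and that the realisation $w_k$ of the transported type exists in the current copy by Theorem~\ref{thm:Fra1}).
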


\begin{cor}\label{cor:fin3}
Let $\mathcal{D}$ be a universal set of numbers  and $\mathcal{B}$ the block of $\mathcal{D}$ with $\min(\mathcal{D}\setminus\{0\})=\min\mathcal{B}$ and let $\mathrm{M}=(M, \de)\in \mathfrak{U}_{\mathcal{D}}$ and $E=(v_i; i\in \omega)$ an enumeration of $M$.  Then for every coloring $\chi: M\to 2$ there exists an embedding $\alpha$ of\/ $\mathrm{M}$ into $\mathrm{M}$ so that for every $n\in \omega$ and every  \Kat function $\mathfrak{p}$ with $\dom(\mathfrak{p})=(\alpha(v_i); i\in n)$ and $\rank(\mathfrak{p})=\max\mathcal{B}$ there exists $i\in 2$ with $\chi(x)=i$ for all $x\in \orb(\mathfrak{p})$.
\end{cor}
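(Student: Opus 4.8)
The plan is to identify the conclusion as the instance $n=0$ of the uniformity statement $\mathbf{p}(\max\mathcal{B})$, and to derive $\mathbf{p}(\max\mathcal{B})$ by a finite induction running up the first block. Since any two spaces in $\mathfrak{U}_{\mathcal{D}}$ are isometric by Theorem~\ref{thm:Fra1}, I would take $\mathrm{M}=\mathrm{M}_{\mathcal{D}}$ without loss, so that the colouring apparatus $\chi,S_0,S_1$ and the statements $\mathbf{p}(r)$ of this section apply verbatim. First I would fix the enumeration $(b_i;i\in n+1)$ of $\mathcal{B}$ furnished by Definition~\ref{defin:block}, with $b_0=\min\mathcal{B}=1$, $b_{i+1}=b_i^{\langle+\rangle}$, and $b_n=\max\mathcal{B}$; in particular $b_i^{\langle-\rangle}=b_{i-1}$ for $1\le i\le n$.

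The induction is on $i$, with induction hypothesis the global assertion $\mathbf{p}(b_i)$. The base case $i=0$ is $\mathbf{p}(1)$, which is exactly Corollary~\ref{cor:color1}. For the step, assume $\mathbf{p}(b_{i-1})$ with $1\le i\le n$. Then $b_i$ is not the minimum of $\mathcal{B}$, so the side condition of Corollary~\ref{cor:eitherorn} is satisfied with $r=b_i$ and $r^{\langle-\rangle}=b_{i-1}$, and that corollary yields the implication $\mathbf{p}(b_{i-1})\Rightarrow\mathbf{p}(b_i)$. Iterating to $i=n$ gives $\mathbf{p}(b_n)=\mathbf{p}(\max\mathcal{B})$. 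If the block is a singleton, $\mathcal{B}=\{1\}$, then $\max\mathcal{B}=1$ and the base case already is the conclusion.

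It then remains to read off the corollary from $\mathbf{p}(\max\mathcal{B})$. Applying $\mathbf{p}(\max\mathcal{B})$ to the given copy $\mathrm{M}$, the given enumeration $E=(v_i;i\in\omega)$, and $n=0$ produces an embedding $\alpha$ of $\mathrm{M}$ into $\mathrm{M}$ and an enumeration $\alpha(E)=(\alpha(v_i);i\in\omega)$ that is $(\max\mathcal{B})$-uniform from $0$ on. By the definition of $r$-uniform from $0$, this means that for every $m\in\omega$ every \Kat function $\mathfrak{p}$ with $\dom(\mathfrak{p})=(\alpha(v_i);i\in m)$ and $\rank(\mathfrak{p})=\max\mathcal{B}$ is monochromatic, i.e.\ there is $i\in2$ with $\chi(x)=i$ for all $x\in\orb(\mathfrak{p})$. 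That is exactly the assertion of the corollary.

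Because all the genuine work is discharged by Corollaries~\ref{cor:color1} and~\ref{cor:eitherorn}, and because the block $\mathcal{B}$ is finite so that no limit stage intervenes, the proof is a short assembly. The one point that must be checked with care --- and the only real obstacle --- is that every $b_i$ with $1\le i\le n$ does meet the hypothesis of Corollary~\ref{cor:eitherorn}, so that the chain $\mathbf{p}(b_0)\Rightarrow\mathbf{p}(b_1)\Rightarrow\cdots\Rightarrow\mathbf{p}(b_n)$ has no gap; this is immediate from the fact that precisely the non-minimal elements of the block fall under Corollary~\ref{cor:eitherorn}.
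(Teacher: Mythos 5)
Your proof is correct and is exactly the derivation the paper intends: Corollary~\ref{cor:color1} supplies the base case $\mathbf{p}(1)$, Corollary~\ref{cor:eitherorn} supplies the step $\mathbf{p}(b_{i-1})\Rightarrow\mathbf{p}(b_i)$ for each non-minimal $b_i\in\mathcal{B}$, and the instance $n=0$ of $\mathbf{p}(\max\mathcal{B})$ is precisely the stated conclusion. The only point worth recording is that you rightly read the hypothesis of Corollary~\ref{cor:eitherorn} through its parenthetical gloss (``$r$ is not the minimum of $\mathcal{B}$''), since the displayed inequality $2\cdot r^{\langle -\rangle}<r$ there is evidently a typo for its negation --- for $r=b_i$ with $i\geq 1$ the block axioms give $r\leq r^{\langle-\rangle}+b_0\leq 2\cdot r^{\langle-\rangle}$, and what the chain actually needs is $1<r$, which non-minimality provides.
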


Note that for  $\mathcal{D}$ universal  the class $\mathfrak{U}_{\mathcal{D}}$ is indivisible if an, and hence  all, $\mathrm{M}\in \mathfrak{U}_{\mathcal{D}}$ are indivisible.
\begin{thm}\label{thm:fin2}
Let $\mathcal{D}$ universal  consist of a single block. Then $\mathfrak{U}_{\mathcal{D}}$ is indivisible.
\end{thm}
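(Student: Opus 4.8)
The plan is to reduce the statement to Corollary~\ref{cor:fin3} together with Theorem~\ref{thm:orbits}, using the single-block hypothesis only through the identity $\max\mathcal{B}=\max\mathcal{D}$. The key observation is the following. Since $\mathcal{D}$ is a single block, $\mathcal{B}=\mathcal{D}\setminus\{0\}$, so $\max\mathcal{B}=\max\mathcal{D}$. Hence, for any \Kat function $\mathfrak{p}$ with $\rank(\mathfrak{p})=\max\mathcal{B}$ one has $\mathcal{D}_{\mathfrak{p}}=\{n\in\mathcal{D}:n\leq 2\cdot\rank(\mathfrak{p})\}=\mathcal{D}$, because every $n\in\mathcal{D}$ satisfies $n\leq\max\mathcal{D}\leq 2\max\mathcal{D}$. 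Theorem~\ref{thm:orbits} then says that for any $\mathrm{H}\in\mathfrak{U}_{\mathcal{D}}$ the restriction $\restrict{\mathrm{H}}{\orb_{\mathrm{H}}(\mathfrak{p})}$ is isometric to $\boldsymbol{U}_{\mathcal{D}_{\mathfrak{p}}}=\boldsymbol{U}_{\mathcal{D}}$; that is, the orbit of a maximal-rank \Kat function is itself a copy of $\mathrm{M}_{\mathcal{D}}$. This is exactly where the single-block assumption enters: were there several blocks one would only obtain $\mathcal{D}_{\mathfrak{p}}=\mathcal{B}\cup\{0\}\subsetneq\mathcal{D}$, and the orbit would be a copy of a strictly smaller Urysohn space rather than of $\mathrm{M}_{\mathcal{D}}$ itself.

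With this in hand I would run the colouring argument as follows. Fix $\mathrm{M}=(M,\de)\in\mathfrak{U}_{\mathcal{D}}$, an enumeration $E=(v_i;i\in\omega)$ of $M$, and an arbitrary colouring $\chi:M\to 2$. By Corollary~\ref{cor:fin3} there is an embedding $\alpha$ of $\mathrm{M}$ into $\mathrm{M}$, with image $\mathrm{M}'=\alpha(\mathrm{M})\in\mathfrak{U}_{\mathcal{D}}$, such that every \Kat function whose domain is an initial segment of $\alpha(E)$ and whose rank equals $\max\mathcal{B}$ has a $\chi$-monochromatic orbit. Now let $\mathfrak{p}$ be the \Kat function of $\mathrm{M}'$ with $\dom(\mathfrak{p})=\{\alpha(v_0)\}$ and $\mathfrak{p}(\alpha(v_0))=\max\mathcal{D}$; a one-point domain imposes no triangle condition, so $\mathfrak{p}$ is indeed a \Kat function, and $\rank(\mathfrak{p})=\max\mathcal{D}=\max\mathcal{B}$. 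Corollary~\ref{cor:fin3} then furnishes $i\in 2$ with $\chi$ constantly equal to $i$ on $\orb_{\mathrm{M}'}(\mathfrak{p})$, while the observation of the previous paragraph shows that $\restrict{\mathrm{M}'}{\orb_{\mathrm{M}'}(\mathfrak{p})}$ is a copy of $\mathrm{M}_{\mathcal{D}}$. Thus $\orb_{\mathrm{M}'}(\mathfrak{p})$ is a monochromatic copy of $\mathrm{M}_{\mathcal{D}}$ sitting inside $\mathrm{M}$, so $\mathrm{M}$ is indivisible; by the remark preceding the statement, $\mathfrak{U}_{\mathcal{D}}$ is indivisible.

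The genuine difficulty has already been absorbed into Corollary~\ref{cor:fin3} and the chain of results feeding it, in particular the central extension theorem and the orbit amalgamation theorem. Granting those, the only thing to verify here is the identity $\mathcal{D}_{\mathfrak{p}}=\mathcal{D}$ forced by the single block, after which the existence of a monochromatic copy is immediate. I therefore expect no serious obstacle in this final step beyond making that identification explicit and checking that a maximal-rank one-point type produces a genuine copy via Theorem~\ref{thm:orbits}.
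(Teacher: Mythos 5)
Your proof is correct and follows essentially the same route as the paper: apply Corollary~\ref{cor:fin3} to obtain a monochromatic \Kat function of rank $\max\mathcal{B}=\max\mathcal{D}$, then invoke Theorem~\ref{thm:orbits} with the observation that $\mathcal{D}_{\mathfrak{p}}=\mathcal{D}$ to see that its orbit is a monochromatic copy. Your extra detail of exhibiting the one-point \Kat function $\mathfrak{p}(\alpha(v_0))=\max\mathcal{D}$ just makes explicit what the paper leaves implicit.
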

\begin{proof}
Let $\mathrm{M}=(M,\de)\in \mathfrak{U}_{\mathcal{D}}$. It follows from Corollary \ref{cor:fin3} that there is a monochromatic  \Kat function $\mathfrak{p}$ of $\mathrm{M}$ with $\rank(\mathfrak{p})=\max\mathcal{D}$ and from Theorem \ref{thm:orbits} that $\restrict{\mathrm{M}}{\orb(\mathfrak{p}})$ is a copy of $\mathrm{M}$ because $\mathcal{D}_\mathfrak{p}=\{n\in \mathcal{D} : n\leq 2\cdot\max\mathcal{D}\}=\mathcal{D}$.
\end{proof}

\section{More than one euquivalence class}

Let $\mathcal{D}$ be universal  and  $\mathcal{B}$ be the block of $\mathcal{D}$ with $\min(\mathcal{D}\setminus\{0\})=\min\mathcal{B}$ and let  $\max\mathcal{B}=\mathbf{m}$ and $\mathcal{D}$ have at least two blocks. Let $\mathrm{M}=(M,\de)\in \mathfrak{U}_{\mathcal{D}}$ and  let $\sim$ denote the equivalence relation $\stackrel{\mathbf{m}}{\sim}$. For $x\in M$ denote the $\sim$-equivalence class containing $x$ by $[x]$.

%For $m,n\in \mathcal{D}\setminus\mathcal{B}$ let $m\equiv n$ if $|m-n|\leq \mathbf{m}$. Then $\equiv$ is an equivalence relation. For let $m\leq n\leq r$ and $n-m\leq \mathbf{m}$ and $r-n\leq \mathbf{m}$ and let $A\in  M{/\negthickspace\sim}$ and $a\in A$ and $B\in M{/\negthickspace\sim}$ with $n\in \de(A,b)$. According to Lemma~\ref{lem:pairdist} Item~(2) there is a point $b_n\in B$ with $de(a,b_n)= n$. It follows from Lemma~\ref{lem:pairdist} Item~(4) that $m$ and $r$ in $\de(A,B)$. Hence there exist points  $b_m$  and $b_r$ in $B$ with $\de(a,b_m)=m$  and $\de(a,b_r)=r$. The triangle $a,b_m, b_r$ is metric and hence $ r-m\leq \de(b_m,b_r)\leq \mathbf{m}$. For $r\in \mathcal{D}\setminus\mathcal{B}$ denote the $\equiv$-equivalence class containing $r$ by $[r]^\equiv$.

%For $\mathfrak{p}$ and $\mathfrak{q}$ two \Kat functions let $\mathfrak{p}\equiv \mathfrak{q}$ if $\dom(\mathfrak{p})=\dom(\mathfrak{q})$ and $\mathfrak{p}(x)\equiv\mathfrak{q}(x)$ for all $x\in \dom(\mathfrak{p})$. Then $\equiv$ is an equivalence relation on the set of  \Kat functions. For $\mathfrak{p}$ a \Kat function let $[\mathfrak{p}]^\equiv$ be the equivalence class of \Kat functions containing $\mathfrak{p}$. 

\begin{lem}\label{lem:imp}
Let $A\in  M{/\negthickspace\sim}$ and $a\in A$ and $X=\{x\in M : \de(a,x)<\mathbf{m}\}$ and $C=M\setminus X$. Then $\restrict{\mathrm{M}}{C}$ is a copy of\/ $\mathrm{M}$ in $\mathrm{M}$.
\end{lem}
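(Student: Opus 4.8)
The plan is to reduce to the orbit criterion of Corollary~\ref{cor:Fra1_2}: $\restrict{\mathrm{M}}{C}$ is a copy of $\mathrm{M}$ in $\mathrm{M}$ if and only if every \Kat function $\mathfrak{t}$ of $\mathrm{M}$ with $\dom(\mathfrak{t})\subseteq C$ has a realization inside $C$, that is $\orb(\mathfrak{t})\cap C\neq\emptyset$. So I would fix an arbitrary such $\mathfrak{t}$ and exhibit a point $y\in\orb(\mathfrak{t})$ with $\de(a,y)\geq\mathbf{m}$, which is exactly the statement $y\in C$.

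First I would record that $a\notin C$, since $\de(a,a)=0<\mathbf{m}$; hence $a\notin\dom(\mathfrak{t})$, and I may form the type function $\mathfrak{k}$ with $\dom(\mathfrak{k})=\dom(\mathfrak{t})$ and $\mathfrak{k}(x)=\de(a,x)$. This $\mathfrak{k}$ is a \Kat function because it is realized by $a$. The key observation is that every $x\in\dom(\mathfrak{t})\subseteq C$ satisfies $\de(a,x)\geq\mathbf{m}$, so $\mathfrak{k}(x)+\mathfrak{t}(x)\geq\mathbf{m}$ for all $x\in\dom(\mathfrak{t})$; by Corollary~\ref{cor:range} Item~2 this forces $\de_{\max}(\mathfrak{k},\mathfrak{t})\geq\mathbf{m}$, since $\mathbf{m}\in\mathcal{D}$ lies below $\min\{\mathfrak{k}(x)+\mathfrak{t}(x):x\in\dom(\mathfrak{t})\}$.

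I would then set $d:=\de_{\max}(\mathfrak{k},\mathfrak{t})\in\de(\mathfrak{k},\mathfrak{t})$, so that $d\geq\mathbf{m}$, and consider the type function $\mathfrak{t}'$ with $\dom(\mathfrak{t}')=\dom(\mathfrak{t})\cup\{a\}$, with $\mathfrak{t}\subseteq\mathfrak{t}'$ and $\mathfrak{t}'(a)=d$. Applying Lemma~\ref{lem:range} with $a\in\orb(\mathfrak{k})$ as the chosen realization of $\mathfrak{k}$, the function $\mathfrak{t}'$ is a \Kat function precisely because $d\in\de(\mathfrak{k},\mathfrak{t})$. Theorem~\ref{thm:Fra1} then produces a realization $y$ of $\mathfrak{t}'$; since $\mathfrak{t}\subseteq\mathfrak{t}'$ we get $y\in\orb(\mathfrak{t})$, and since $\de(a,y)=\mathfrak{t}'(a)=d\geq\mathbf{m}$ we get $y\in C$. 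Thus $\orb(\mathfrak{t})\cap C\neq\emptyset$, and Corollary~\ref{cor:Fra1_2} finishes the proof.

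The argument is short once the right value of $d$ is isolated, so the only genuine obstacle is choosing it. A naive choice such as $\mathfrak{t}'(a)=\mathbf{m}$ fails, because the upper triangle inequality $\de(a,x)\leq\mathbf{m}+\mathfrak{t}(x)$ can break when $x$ lies in a $\sim$-class far from $[a]$ while $\mathfrak{t}(x)$ is small. Passing instead to $d=\de_{\max}(\mathfrak{k},\mathfrak{t})$ is what keeps the one-point extension metric while still guaranteeing $d\geq\mathbf{m}$; the inequality $\min\{\mathfrak{k}(x)+\mathfrak{t}(x):x\in\dom(\mathfrak{t})\}\geq\mathbf{m}$, which rests only on $\dom(\mathfrak{t})\subseteq C$ together with $\mathbf{m}\in\mathcal{D}$, is the heart of the matter.
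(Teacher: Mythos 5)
Your proof is correct, and it takes a genuinely different (and cleaner) route than the paper's. The paper argues by cases on $\rank(\mathfrak{p})$: for $\rank(\mathfrak{p})>\mathbf{m}$ it appeals to a lemma producing a realization outside the class $A$ (the citation there is in fact a dangling reference in the source), and for $\rank(\mathfrak{p})\leq\mathbf{m}$ it notes that $\orb(\mathfrak{p})$ lies in a single $\sim$-class $[c]$, settles the subcase $[c]\neq A$ via the block gap of Lemma~\ref{lem:pairdist}, and in the remaining subcase adjoins $a$ to the type with value exactly $\mathbf{m}$, checking the triangle inequalities by hand using $\mathfrak{p}(x)>\mathbf{m}$ for $x\notin A$. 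You instead perform a single uniform one-point extension: with $\mathfrak{k}$ the distance-type of $a$ over $\dom(\mathfrak{t})$ and $\mathfrak{t}'(a)=\de_{\max}(\mathfrak{k},\mathfrak{t})$, the equivalence in (the proof of) Lemma~\ref{lem:range} does all the triangle-inequality checking, and the single observation $\de(a,x)\geq\mathbf{m}$ for $x\in\dom(\mathfrak{t})\subseteq C$ forces $\de_{\max}(\mathfrak{k},\mathfrak{t})\geq\mathbf{m}$ via Corollary~\ref{cor:range} Item~2. This buys you: no case analysis, no dependence on the broken reference, and no use of the block or equivalence-class structure at all --- your argument actually proves the stronger statement that deleting the open ball of radius $s$ about any point of $M$ leaves a copy, for any $s\in\mathcal{D}$. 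The only step worth making explicit is that $\de(\mathfrak{k},\mathfrak{t})\neq\emptyset$, so that $\de_{\max}(\mathfrak{k},\mathfrak{t})$ exists and belongs to $\de(\mathfrak{k},\mathfrak{t})$; this is immediate from Theorem~\ref{thm:Fra1}, since both orbits are nonempty.
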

\begin{proof}
Let $\mathfrak{p}$ be a \Kat function of $\mathrm{M}$ with $\dom(\mathfrak{p})\subseteq C$.  According to Corollary \ref{cor:Fra1_2} we have to show that $\orb(\mathfrak{p})\cap C\not=\emptyset$.   If $\rank(\mathfrak{p})>\mathbf{m}$ Lemma \ref{lem:twocl} implies that there is a point $b\in \orb(\mathfrak{p})$ with $b\not\in A$  and hence $\orb(\mathfrak{p})\cap C\not=\emptyset$. Let $\rank(\mathfrak{p})\leq \mathbf{m}$.  Let $c\in \dom(\mathfrak{p})$ with $\mathfrak{p}(c)\leq \mathbf{m}$. Then $\orb(\mathfrak{p})\subseteq [c]$. Hence if $c\not\in A$ then $\orb(\mathfrak{p})\subseteq C$. Let $c\in A$. It follows that $\orb(\mathfrak{p})\subseteq A$ and that $\mathfrak{p}(x)\leq \mathbf{m}$ for all $x\in \dom(\mathfrak{p})\cap A$ and that $\mathfrak{p}(x)\geq 2\cdot\mathbf{m}$ for all $x\in \dom(\mathfrak{p})\setminus A$.

Let $b\in \orb(\mathfrak{p})$ and  $\mathfrak{t}$ the \Kat function with $\dom(\mathfrak{t})=\mathfrak{p}\cup \{a\}$ and $\mathfrak{p}\subseteq \mathfrak{t}$ and $b\in \orb(\mathfrak{t})$. Then $\mathfrak{t}(a)\leq \mathbf{m}$. Let $\mathfrak{q}$ be the type function with $\dom(\mathfrak{q})=\dom(\mathfrak{p})\cup \{a\}$ and $\mathfrak{p}\subseteq \mathfrak{q}$ and $\mathfrak{q}(a)=\mathbf{m}$. In order to check that $\mathfrak{q}$ is a \Kat function we use inequality (1) from Section 2.  Because $\mathfrak{p}$ is a \Kat function it remains to check that $|\mathfrak{q}(a)-\mathfrak{q}(x)|\leq \de(a,x)\leq \mathfrak{q}(a)+\mathfrak{q}(x)$ for all $x\in \dom(\mathfrak{q})$. That is that:
\begin{align}
|\mathbf{m}-\mathfrak{p}(x)|\leq \de(a,x)\leq \mathbf{m}+\mathfrak{p}(x).
\end{align}
If $x\in A$ then $x\in C\cap A$ and  $\de(a,x)=\mathbf{m}$ and then Inequaltiy~(4) holds. If $x\not\in A$ it follows from the fact that $\mathfrak{t}$ is a \Kat function and that $\mathfrak{t}(x)=\mathfrak{p}(x)$ and and hence from Inequality (1) that:
\begin{align}
|\mathfrak{t}(a)-\mathfrak{p}(x)|\leq \de(a,x)\leq \mathfrak{t}(a)+\mathfrak{p}(x).
\end{align}
Because $\mathfrak{p}(x)>\mathbf{m}\geq\mathfrak{t}(a)$ Inequality (3) implies Inequality (2). 
\end{proof}

Let $E=(v_i; i\in \omega)$ be an enumeration of $M$   and $E_n=(v_i; i\in n)$ for $n\in \omega$. The element $v_n\in M$ is {\em initial} if $[v_n]\cap E_n=\emptyset$.  For $v_n$ an initial point let $\mathfrak{e}'_n$ be the \Kat function with $\dom(\mathfrak{e}'_n)=E_n$ and $v_n\in \orb(\mathfrak{e}'_n)$. Let $\mathfrak{e}_n$ be the \Kat function with $\dom(\mathfrak{e}_n)=E_{n+1}$ and $\mathfrak{e}'_n\subseteq \mathfrak{e}_n$ and $\mathfrak{e}_n(v)=\mathbf{m}$. (Because $v_n$ is initial the rank of $\mathfrak{e}'$ is larger than $\mathbf{m}$ and hence it follows from Corollary \ref{cor:orbits} that $\mathfrak{e}_n$ is indeed a \Kat function.) Then $\rank(\mathfrak{e}_n)=\mathbf{m}$.

Let $\chi: M\to 2$ be a colouring of $M$. Then we obtain from Corollary~\ref{cor:fin3} that there exists an embedding $\alpha$ of $\mathrm{M}$ into $\mathrm{M}$ so that for every initial point $v_n$ the \Kat function $\alpha(\mathfrak{e}_n)$ is monochromatic. That is in the equivalence class $[\alpha(v_n)]$ the set of points of distance $\mathbf{m}$ from $\alpha(v_n)$ is monochromatic. It follows then from Lemma \ref{lem:imp} that by removing the points in $\alpha(M)$ at distance to $v_n$ less than $\mathbf{m}$ we obtain a copy of $\mathrm{H}=(H, \de)$ of $\alpha(\mathrm{M})$ in $\alpha(\mathrm{M})$ in which  the $\sim$-equivalence class $[v_n]\cap H$ of $H$ is monochromatic. Hence we obtained:
\begin{lem}\label{lem:monoequ}
Let $\mathcal{D}$ be universal  and  $\mathcal{B}$ be the block of $\mathcal{D}$ with $\min(\mathcal{D}\setminus\{0\})=\min\mathcal{B}$ and let  $\max\mathcal{B}=\mathbf{m}$ and $\mathcal{D}$ have at least two blocks. Let $\mathrm{M}=(M,\de)\in \mathfrak{U}_{\mathcal{D}}$ and  let $\sim$ denote the equivalence relation $\stackrel{\mathbf{m}}{\sim}$. Let $\chi: M\to 2$ be a colouring of $M$. Then there exists a copy of $\mathrm{M}$ in $\mathrm{M}$ in which every $\sim$-equivalence class is monochromatic. 
\end{lem}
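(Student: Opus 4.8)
The plan is to marry the colour-regularity supplied by Corollary~\ref{cor:fin3} with the class-thinning operation of Lemma~\ref{lem:imp}. Fix the enumeration $E=(v_i;i\in\omega)$ and recall that $v_n$ is \emph{initial} precisely when it is the first representative of its $\sim$-class to occur, so that the rank-$\mathbf{m}$ \Kat function $\mathfrak{e}_n$ (which records the distances from $v_n$ to $E_n$ and assigns $\mathbf{m}$ to $v_n$ itself) has $\orb(\mathfrak{e}_n)\subseteq[v_n]$ equal to the set of points at distance $\mathbf{m}$ from $v_n$ agreeing with $v_n$ on $E_n$. First I would feed the given colouring $\chi$ into Corollary~\ref{cor:fin3} to obtain one embedding $\alpha$ of $\mathrm{M}$ into $\mathrm{M}$ for which every rank-$\mathbf{m}$ \Kat function supported on an initial segment of $\alpha(E)$ has a monochromatic orbit; in particular each $\orb(\alpha(\mathfrak{e}_n))$ carries a single colour.

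Working inside $\alpha(\mathrm{M})$, the next step is to delete from each $\sim$-class the points lying at distance strictly less than $\mathbf{m}$ from the image $\alpha(v_n)$ of its initial representative. For one class this is exactly Lemma~\ref{lem:imp} with $a=\alpha(v_n)$, so the restriction to the complement is again a copy of $\mathrm{M}$, and the surviving part of that class is the sphere $\{x\in[\alpha(v_n)]:\de(x,\alpha(v_n))=\mathbf{m}\}$. This sphere is the orbit of the single-point \Kat function of rank $\mathbf{m}$ at $\alpha(v_n)$, hence by Theorem~\ref{thm:orbits} it is itself a copy of the Urysohn space on the first block and is a genuine full $\sim$-class of the new space. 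Carrying out the deletion for every initial point and intersecting produces the candidate copy $\mathrm{H}$ whose classes are these spheres.

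Two issues require genuine care, and the second is where I expect the real difficulty to lie. First, infinitely many single-class deletions must be fused into one copy: since the deleted sets live in pairwise distinct classes and each deletion individually preserves the copy property, I would arrange the deletions as a recursion along $E$ and confirm through Corollary~\ref{cor:Fra1_2} that the limit restriction still realises every \Kat function supported on it, hence is a copy of $\mathrm{M}$. Second, and more seriously, Corollary~\ref{cor:fin3} only makes the orbit $\orb(\alpha(\mathfrak{e}_n))$ monochromatic, whereas the full sphere decomposes into many such orbits, one for each admissible distance pattern to the finitely many external reference points in $E_n$, and distinct orbits may well receive distinct colours. The main obstacle is therefore to shrink each class past its sphere, all the way to the single monochromatic orbit $\orb(\alpha(\mathfrak{e}_n))$. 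I would do this by following the distance-$<\mathbf{m}$ deletion with an orbit-selection in the spirit of Corollary~\ref{cor:reduce} (its finite simultaneous form Theorem~\ref{thm:reducegen}, together with the K\H{o}nig-type limit behind Lemma~\ref{lem:reduce} for the infinitely many classes), pushing each sphere into the monochromatic orbit it contains, and then verifying via Corollary~\ref{cor:Fra1_2} that substituting one first-block $\boldsymbol{U}$-class for another across all classes at once still leaves a copy of $\mathrm{M}$. The base case $v_0$ is reassuring: there $E_0=\emptyset$, the sphere and the orbit coincide, and the class is monochromatic outright, which is what lets the recursion start.
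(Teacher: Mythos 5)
Your first two steps are exactly the paper's: apply Corollary~\ref{cor:fin3} to make every rank-$\mathbf{m}$ \Kat function on an initial segment of $\alpha(E)$ monochromatic, then use Lemma~\ref{lem:imp} to delete the open ball of radius $\mathbf{m}$ about $\alpha(v_n)$ for each initial $v_n$. Your first worry (fusing infinitely many deletions) is minor: the deleted balls lie in pairwise distinct $\sim$-classes, and the case analysis in the proof of Lemma~\ref{lem:imp} is local to the single class containing $\orb(\mathfrak{p})$ when $\rank(\mathfrak{p})\leq\mathbf{m}$, so the simultaneous deletion goes through by essentially the same argument. Your second worry is the substantive one, and you are right to raise it: at this point the paper simply identifies the surviving class, the sphere $\{x:\de(x,\alpha(v_n))=\mathbf{m}\}$, with $\orb(\alpha(\mathfrak{e}_n))$ and declares it monochromatic, whereas the sphere is in general the disjoint union of the orbits of \emph{all} rank-$\mathbf{m}$ \Kat functions on $E_{n+1}$ sending $\alpha(v_n)$ to $\mathbf{m}$, and Corollary~\ref{cor:fin3} makes each of these monochromatic separately, possibly in different colours. (For $\mathcal{D}=\{0,1,3,4\}$, colouring the points of the class of $\alpha(v_1)$ by their distance, $3$ or $4$, to $\alpha(v_0)$ makes every rank-$1$ orbit on an initial segment monochromatic while leaving the sphere about $\alpha(v_1)$ bichromatic.) So you have correctly located the step on which the whole lemma turns.

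The problem is that your repair of that step is only a plan, and the tools you cite do not apply as stated. Corollary~\ref{cor:reduce} pushes $\orb(\mathfrak{t})$ into $\orb(\mathfrak{s})$ for a single pair $\mathfrak{t}\subseteq\mathfrak{s}$ of equal rank while fixing only $\dom(\mathfrak{t})$; Theorem~\ref{thm:reducegen} and Lemma~\ref{lem:reduce} handle finitely many functions sharing one common domain pair $A\subseteq A\cup B$ (resp.\ $E_l\subseteq E_m$). Here you must simultaneously shrink infinitely many classes, the $n$-th being the orbit of the singleton function $\{(\alpha(v_n),\mathbf{m})\}$, into orbits $\orb(\alpha(\mathfrak{e}_n))$ whose domains $E_{n+1}$ grow without bound, and each individual application of Corollary~\ref{cor:reduce} fixes only one point and may therefore displace the points already committed for earlier classes. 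You would need to actually run a fusion argument in the style of the proof of Lemma~\ref{lem:reduce} --- fix an increasing sequence of finite sets exhausting the intended copy, at each stage re-apply Theorem~\ref{thm:reducegen} relative to the already committed points, and verify via Corollary~\ref{cor:Fra1_2} that the limit realises every \Kat function --- or else find a different way to make entire spheres monochromatic. Until that is written out, the decisive step of your argument is asserted rather than proved; it is the same step the paper compresses into one sentence, so identifying it is genuine progress, but it is not yet a proof.
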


For $\mathrm{M}=(M, \de)\in \mathfrak{U}_{\mathcal{D}}$ let $\mathrm{M}/\negthickspace\sim$ be the metric space with $M/\negthickspace\sim$ as set of points and distance function $\de_{\min}$ with $\de_{\min}(A,B)=\min\{\de(x,y) : x\in A, y\in B\}$, see Lemma \ref{lem:trdist}. Let $\mathcal{D}_{\min}$ be the set of distances in  $\mathrm{M}/\negthickspace\sim$.

\begin{lem}\label{lem:demin}
If $\mathcal{D}$ is a universal set of numbers then $\mathcal{D}_{\min}$ is universal and for $\mathrm{M}=(M,\de)\in \mathfrak{U}_{\mathcal{D}}$ the metric space $\mathrm{M}/\negthickspace\sim=(M/\negthickspace\sim, \de_{\min})\in \mathfrak{U}_{\mathcal{D}_{\min}}$.  If $\mathrm{N}=(N;\de_{\min})$ is a copy of $\mathrm{M}/\negthickspace\sim$ in $\mathrm{M}/\negthickspace\sim$ then $\bigcup N$ induces a copy of $\mathrm{M}$ in $\mathrm{M}$. 
\end{lem}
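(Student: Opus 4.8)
The plan is to prove the three assertions together, with Theorem~\ref{thm:Fra1} and Corollary~\ref{cor:Fra1_2} doing the real work. Throughout let $A_1,\dots,A_k$ denote distinct $\sim$-classes. Lemma~\ref{lem:trdist} already gives that $\de_{\min}$ is a metric on $M/\negthickspace\sim$, and Lemma~\ref{lem:pairdist} gives that distinct classes lie at distance $>2\mathbf{m}$, that every value of $\de(A_i,A_j)$ is attained from a single fixed point of $A_i$, and that $\de_{\max}(A_i,A_j)-\de_{\min}(A_i,A_j)\le\mathbf{m}$; in particular $\mathcal{D}_{\min}\subseteq\{0\}\cup(\mathcal{D}\cap(2\mathbf{m},\infty))$. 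I would first prove the central fact that \emph{every \Kat function of $\mathrm{M}/\negthickspace\sim$ (with respect to $\de_{\min}$) is realised by a class}, since this both exhibits $\mathrm{M}/\negthickspace\sim$ as a homogeneous space embedding every finite $\mathcal{D}_{\min}$-metric space — hence witnesses that $\mathcal{D}_{\min}$ is universal and, by Theorem~\ref{thm:Fra1}, that $\mathrm{M}/\negthickspace\sim\in\mathfrak{U}_{\mathcal{D}_{\min}}$ — and drives the lifting statement.

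The heart of the matter is this realisation. A \Kat function of the quotient assigns values $t_1,\dots,t_k\in\mathcal{D}_{\min}$ to $A_1,\dots,A_k$ subject to $|t_i-t_j|\le\de_{\min}(A_i,A_j)\le t_i+t_j$. First I would use Lemma~\ref{lem:embedmin} to choose representatives $a_i\in A_i$ with $\de(a_i,a_j)=\de_{\min}(A_i,A_j)$; these inequalities are exactly Inequalities~(1) for the type function $\mathfrak{p}$ with $\dom(\mathfrak{p})=\{a_1,\dots,a_k\}$ and $\mathfrak{p}(a_i)=t_i$, so $\mathfrak{p}$ is a \Kat function of $\mathrm{M}$ and has a realisation $d$ by Theorem~\ref{thm:Fra1}. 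Since $t_i>2\mathbf{m}$ the class $[d]$ is distinct from each $A_i$ and $\de_{\min}([d],A_i)\le\de(d,a_i)=t_i$. The delicate point — and the one I expect to be the main obstacle — is the reverse inequality $\de_{\min}([d],A_i)\ge t_i$: a priori the infinite class $A_i$ could contain a point nearer to $d$ than $t_i$, pinning the minimum down at a ``grid-snapped'' value near $t_i-\mathbf{m}$ rather than at $t_i$. I would rule this out using that $t_i\in\mathcal{D}_{\min}$ is a genuine $\de_{\min}$-value. Suppose $\de(d,x)=n<t_i$ for some $x\in A_i$ and put $\mu=\de(a_i,x)\le\mathbf{m}$; the triangle $d,a_i,x$ forces $t_i\le n+\mu$, so $0<t_i-n\le\mu$. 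Fix witnessing classes $A,B$ with $\de_{\min}(A,B)=t_i$ and points $a\in A$, $b\in B$ with $\de(a,b)=t_i$ (Lemma~\ref{lem:pairdist}). Then $|\mu-n|\le t_i\le\mu+n$, so $\{a\mapsto\mu,\ b\mapsto n\}$ is a \Kat function of $\mathrm{M}$; its realisation $x'$ satisfies $\de(a,x')=\mu\le\mathbf{m}$, whence $x'\in A$, and $\de(b,x')=n<t_i$, contradicting $\de_{\min}(A,B)=t_i$. Thus no such $x$ exists, $\de_{\min}([d],A_i)=t_i$, and $[d]$ realises the given quotient \Kat function.

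Granting the realisation, universality of $\mathcal{D}_{\min}$ and the membership $\mathrm{M}/\negthickspace\sim\in\mathfrak{U}_{\mathcal{D}_{\min}}$ follow: iterating the realisation one class at a time embeds every finite $\mathcal{D}_{\min}$-metric space (its distances lie in $\mathcal{D}_{\min}$ and satisfy the triangle inequalities demanded above), and a back-and-forth whose single step is again a realisation gives homogeneity, so $\mathrm{M}/\negthickspace\sim$ is the Urysohn space of $\mathcal{D}_{\min}$; alternatively one verifies Lemma~\ref{lem:triangleen} directly, the admissible range for the fourth distance having upper end $>4\mathbf{m}$. For the lifting, let $\mathrm{N}=(N,\de_{\min})$ be a copy of $\mathrm{M}/\negthickspace\sim$ and set $C=\bigcup N$; by Corollary~\ref{cor:Fra1_2} it suffices to show that every \Kat function $\mathfrak{q}$ of $\mathrm{M}$ with $\dom(\mathfrak{q})\subseteq C$ meets $C$ in its orbit. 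If $\mathfrak{q}(c)\le\mathbf{m}$ for some $c\in\dom(\mathfrak{q})$ then $\orb(\mathfrak{q})\subseteq[c]\subseteq C$ and we are done, since $c\in C$ forces $[c]\in N$. Otherwise all values of $\mathfrak{q}$ exceed $2\mathbf{m}$, the domain points lie in classes $A_1,\dots,A_k\in N$, and a realisation $d$ of $\mathfrak{q}$ in $\mathrm{M}$ yields a quotient \Kat function $A_i\mapsto\de_{\min}([d],A_i)$; as $\mathrm{N}$ is a copy of $\mathrm{M}/\negthickspace\sim$ this is realised by a class $D\in N$, and transporting $d$ into $D$ by the mapping extension property of Theorem~\ref{thm:Fra1} (fixing $\dom(\mathfrak{q})$) places a realisation of $\mathfrak{q}$ inside $D\subseteq C$. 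Hence $C$ induces a copy of $\mathrm{M}$.
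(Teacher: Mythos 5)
Your proof is correct and follows essentially the same route as the paper: lift a quotient \Kat function to $\mathrm{M}$ by choosing representatives via Lemma~\ref{lem:embedmin}, realise the lifted function in $\mathrm{M}$, and show the realisation's class realises the quotient function; then, for the copy statement, locate the target class inside $N$ and re-realise the given \Kat function of $\mathrm{M}$ within that class. The one place you go beyond the paper is the verification that $\de_{\min}([d],A_i)$ equals $t_i$ exactly rather than merely being $\leq t_i$ --- a point the paper asserts without argument --- and your contradiction using a witnessing pair of classes for $t_i\in\mathcal{D}_{\min}$ is a correct and worthwhile addition.
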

\begin{proof}
Let $\mathfrak{p}$ be a \Kat  function of $\mathrm{M}/\negthickspace\sim$ with $\mathfrak{p}(X)\in \mathcal{D}_{\min}$ for all $X\in \dom(\mathfrak{p})$.  According to Lemma \ref{lem:embedmin} there exists an isometry $\beta:\dom(\mathfrak{p})\to M$ with $\beta(X)\in X$ for all $X\in \dom(\mathfrak{p})$.  Let $\mathfrak{p}'$ be the \Kat function of $\mathrm{M}$ with $\dom(\mathfrak{p}')=\{\beta(X) : X\in \dom(\mathfrak{p}')\}$ and $\mathfrak{p}'(\beta(X))=\mathfrak{p}(X)$. Then $\mathfrak{p}'$ is a \Kat function of $\mathrm{M}$. Let $a\in \orb(\mathfrak{p}')$ then $[a]\in \orb(\mathfrak{p})$.  Hence $\mathrm{M}/\negthickspace\sim\in \mathfrak{U}_{\mathcal{D}_{\min}}$ according to Theorem \ref{thm:Fra1}.

Let $\mathfrak{p}$ be a \Kat function of $\mathrm{M}$ with $\dom(\mathfrak{p})\subseteq \bigcup N$. Then $\mathfrak{p}$ has a realization $a\in M$. There exists an isometry $\alpha$ of  $\bigl(\{[x]_r \mid x\in \dom(\mathfrak{p})\}\cup \{[a]_r\}\bigr)$ into $N$ which is the identity on $\{[x]_r \mid x\in \dom(\mathfrak{p})\}$ and maps $[a]_r$ into $N$. Let $b\in \alpha([a]_r)$. Let $\mathfrak{q}$ be the type  function with $\mathfrak{p}\subseteq \mathfrak{q}$ and $\dom(\mathfrak{q})=\dom(\mathfrak{p}\cup \{b\}$ and $\mathfrak{q}(b)=r$. Then $\mathfrak{q}$ is a \Kat function and has a realization in $[a]_r$.

\end{proof}

\begin{thm}\label{thm:final}
Let $\mathcal{D}$ be a universal set of numbers  and   $\mathrm{M}=(M,\de)\in \mathfrak{U}_{\mathcal{D}}$.  Then $\mathrm{M}$ is indivisible.  
\end{thm}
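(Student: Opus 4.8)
The plan is to argue by induction on the cardinality of the finite distance set $\mathcal{D}$, peeling off the first block of the decomposition provided by Theorem~\ref{thm:distset} at each step. If $\mathcal{D}$ is a single block, then $\mathrm{M}$ is already known to be indivisible by Theorem~\ref{thm:fin2}; this serves as the base case and, more generally, disposes of every single-block $\mathcal{D}$ outright. So the real content is the inductive step, where $\mathcal{D}$ has at least two blocks. There I let $\mathcal{B}$ be the first block, set $\mathbf{m}=\max\mathcal{B}$, let $\sim$ be the equivalence relation $\stackrel{\mathbf{m}}{\sim}$, and fix an arbitrary colouring $\chi\colon M\to 2$.

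The first step is to normalise the colouring along equivalence classes. Using Lemma~\ref{lem:monoequ} I pass to a copy of $\mathrm{M}$ inside $\mathrm{M}$ in which every $\sim$-class is $\chi$-monochromatic; since a monochromatic copy of a copy of $\mathrm{M}$ is itself a monochromatic copy of $\mathrm{M}$, I may simply assume that $\mathrm{M}$ has this property. The colouring $\chi$ then descends to a well-defined $2$-colouring $\bar\chi$ of the quotient $\mathrm{M}/\negthickspace\sim$ by $\bar\chi([x])=\chi(x)$.

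The second step is to quotient and invoke the inductive hypothesis. By Lemma~\ref{lem:demin}, $\mathcal{D}_{\min}$ is universal and $\mathrm{M}/\negthickspace\sim\in\mathfrak{U}_{\mathcal{D}_{\min}}$. Every nonzero distance of $\mathrm{M}/\negthickspace\sim$ is a cross-class value, hence exceeds $2\mathbf{m}$ by Lemma~\ref{lem:pairdist} Item~1, whereas every element of $\mathcal{B}$ is at most $\mathbf{m}$; consequently $\mathcal{D}_{\min}$ omits all of the (at least one) nonzero elements of $\mathcal{B}$, so $|\mathcal{D}_{\min}|<|\mathcal{D}|$. The inductive hypothesis therefore applies, $\mathrm{M}/\negthickspace\sim$ is indivisible, and colouring it by $\bar\chi$ produces a $\bar\chi$-monochromatic copy $\mathrm{N}$ of $\mathrm{M}/\negthickspace\sim$ inside $\mathrm{M}/\negthickspace\sim$, say in colour $i$. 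Finally I lift $\mathrm{N}$ back up: by the last assertion of Lemma~\ref{lem:demin} the union $\bigcup N$ of the classes in $\mathrm{N}$ induces a copy of $\mathrm{M}$ in $\mathrm{M}$, and since each point of $\bigcup N$ lies in a class of $\mathrm{N}$ of $\bar\chi$-colour $i$ which is itself $\chi$-monochromatic, that point has $\chi$-colour $i$. Thus $\bigcup N$ is a $\chi$-monochromatic copy of $\mathrm{M}$, closing the induction.

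The substantive difficulties all reside in the lemmas already proved — the single-block indivisibility of Theorem~\ref{thm:fin2}, the equivalence-class normalisation of Lemma~\ref{lem:monoequ}, and the quotient analysis of Lemma~\ref{lem:demin}. Within this final assembly I expect no serious obstacle; the one point genuinely demanding care is the strict decrease $|\mathcal{D}_{\min}|<|\mathcal{D}|$, which is what guarantees that the induction terminates and which rests on the separation between the first block (bounded by $\mathbf{m}$) and the cross-class distances (exceeding $2\mathbf{m}$), together with $\mathbf{m}>0$.
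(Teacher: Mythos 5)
Your proposal is correct and follows essentially the same route as the paper: induction (the paper phrases it as induction on the number of blocks rather than on $|\mathcal{D}|$, but the termination argument is the same), with Theorem~\ref{thm:fin2} as the single-block case, Lemma~\ref{lem:monoequ} to make each $\sim$-class monochromatic, and Lemma~\ref{lem:demin} to pass to the quotient and lift the monochromatic copy back. Your explicit verification that $|\mathcal{D}_{\min}|<|\mathcal{D}|$ supplies a detail the paper leaves implicit.
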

\begin{proof}
Using Lemma \ref{lem:monoequ} and  Lemma \ref{lem:demin} the Theorem follows by induction on the number of blocks in $\mathcal{D}$ with Theorem \ref{thm:fin2} covering the case of a single block.
\end{proof}

 \end{document}